\let\ifAllProof\iffalse\let\ifFullProof\iftrue\let\ifPersonalNote\iffalse

\unless\ifdefined\IsMainDocument

\documentclass[12pt]{amsart}
\usepackage{amsmath,amssymb,iftex,cancel,xy,xcolor,verbatim}
\usepackage{theoremref}
\usepackage[normalem]{ulem}
\usepackage[margin=1in]{geometry}

\xyoption{all}

\newtheorem{thm}{Theorem}[section]
\newtheorem{lm}[thm]{Lemma}
\newtheorem{prop}[thm]{Proposition}
\newtheorem{cor}[thm]{Corollary}
\theoremstyle{definition}
\newtheorem{defn}[thm]{Definition}
\newtheorem{remark}[thm]{Remark}
\newtheorem{example}[thm]{Example}

\numberwithin{equation}{section}

\title{Hermitian Maass lift for General Level}
\date{}
\author{Lawrence (An Hoa) Vu}

\ifPersonalNote
	\ifPDFTeX
		\usepackage{mdframed}
		\newmdtheoremenv[
		    linewidth=2pt,
		    leftmargin=0pt,
		    innerleftmargin=0.4em,
		    rightmargin=0pt,
		    innerrightmargin=0.4em,
		    innertopmargin=-5pt,
		    innerbottommargin=3pt,
		    splittopskip=\topskip,
		    splitbottomskip=0.3\topskip,
		    skipabove=0.6\topsep
		]{pnote}{Personal note}
	\else
		\definecolor{OliveGreen}{rgb}{0,0.6,0}
		\newcounter{personaln}
		\newenvironment{pnote}{\color{OliveGreen}
		\stepcounter{personaln}
		\par\bigskip\noindent{\bfseries Personal note \arabic{personaln}:}
		\par\medskip}{\par\medskip}
	\fi
\else	
	\let\pnote\comment
	\let\endpnote\endcomment
\fi

\allowdisplaybreaks

\ifPDFTeX
\else
	\usepackage{yfonts}
	\renewcommand{\mathfrak}{\textfrak}
\fi

\newcommand{\LHS}{\text{LHS}}

\newcommand{\UHP}{\mathfrak{H}}
\newcommand{\C}{\mathbb{C}}
\newcommand{\R}{\mathbb{R}}
\newcommand{\Q}{\mathbb{Q}}
\newcommand{\Z}{\mathbb{Z}}

\newcommand{\term}[1]{\emph{#1}}

\newcommand{\Maass}{Maa\ss\ }

\newcommand{\Mat}[1]{M_{#1}}
\newcommand{\GGm}{\mathsf{G_m}}
\newcommand{\GL}[1]{\mathsf{GL}_{#1}}
\newcommand{\SL}[1]{\mathsf{SL}_{#1}}
\newcommand{\GU}[1]{\mathsf{GU}(#1,#1)}
\newcommand{\U}[1]{\mathsf{U}(#1,#1)}
\newcommand{\SU}[1]{\mathsf{SU}(#1,#1)}
\newcommand{\HerMat}[1]{\mathbb{S}_{#1}} \newcommand{\Res}{\text{Res}} 

\newcommand{\GS}{\mathbf{G}} \newcommand{\aDNu}{a_u} \newcommand{\aDNw}{a_w} 

\newcommand{\MF}{\mathfrak{M}}
\newcommand{\CF}{\mathfrak{S}}
\newcommand{\JF}{\mathfrak{J}}
\newcommand{\MFDp}{\MF_{k-1}^+(D, \chi)}
\newcommand{\MFDNp}{\MF_{k-1}^+(DN, \chi)}
\newcommand{\MFDN}{\MF_{k-1}(DN, \chi)}

\newcommand{\Matx}[1]{\begin{pmatrix} #1 \end{pmatrix}}

\newcommand{\ConjTran}[1]{#1^*} \newcommand{\ConjTranInv}[1]{\widehat{#1}} 

\newcommand{\epx}[1]{\mathsf{e}\left[#1\right]}
\newcommand{\sign}[1]{\mathsf{sign}\left[#1\right]}
\newcommand{\cc}[1]{\overline{#1}}

\newcommand{\OK}{\mathfrak{o}_K} \newcommand{\DK}{\mathfrak{d}_K} \newcommand{\Mabcd}{\Matx{a & b\\c & d}}
\newcommand{\Mxyzt}{\Matx{x & y\\z & t}}
\newcommand{\val}{\mathsf{val}}

\newcommand{\parens}[1]{\left( #1 \right)}
\newcommand{\braces}[1]{\left\{ #1 \right\}}
\newcommand{\suchthat}{\;\vline\;}
\newcommand{\tif}{\text{ if }}
\newcommand{\tand}{\text{ and }}
\newcommand{\twhere}{\text{ where }}
\newcommand{\totherwise}{\text{ otherwise }}
\newcommand{\tsince}{\text{ since }}
\newcommand{\teither}{\text{ either }}
\newcommand{\tforsome}{\text{ for some }}
\newcommand{\tor}{\text{ or }}

\newcommand{\Gm}{\Gamma}
\renewcommand{\a}{\alpha}
\renewcommand{\b}{\beta}
\newcommand{\gm}{\gamma}
\newcommand{\ld}{\lambda}
\newcommand{\eps}{\varepsilon}
\renewcommand{\theta}{\vartheta}
\renewcommand{\phi}{\varphi}

\newcommand{\dv}{\;|\;}
\newcommand{\ndv}{\;\nmid\;}
\newcommand{\smD}{\sqrt{-D}}
\newcommand{\sD}{\sqrt{D}}
\renewcommand{\gcd}{}
\newcommand{\jcb}[2]{(#1 \;|\; #2)}
\newcommand{\Tr}{\mathsf{Tr}}
\newcommand{\Nm}[1]{|#1|^2}  \newcommand{\Id}{\text{Id}}
\newcommand{\Gal}{\text{Gal}}

\renewcommand{\labelenumi}{(\roman{enumi})}

\begin{document}

\fi 
\let\IsMainDocument\relax

\begin{abstract}
For an imaginary quadratic field $K$ of discriminant $-D$, let $\chi = \chi_K$ be the associated quadratic character. We will show that the space of special hermitian Jacobi forms of level $N$ is isomorphic to the space of plus forms of level $DN$ and nebentypus $\chi$ (the hermitian analogue of Kohnen's plus space) for any integer $N$ prime to $D$. This generalizes the results of Krieg from $N = 1$ to arbitrary level. Combining this isomorphism with the recent work of Berger and Klosin and a modification of Ikeda's construction we prove the existence of a lift from the space of elliptic modular forms to the space of hermitian modular forms of level $N$ which can be viewed as a generalization of the classical hermitian \Maass lift to arbitrary level.
\end{abstract}

\maketitle

\section{Introduction}

\let\ifAllProof\iffalse\let\ifFullProof\iftrue\let\ifPersonalNote\iffalse

\unless\ifdefined\IsMainDocument

\documentclass[12pt]{amsart}
\usepackage{amsmath,amssymb,iftex,cancel,xy,xcolor,verbatim}
\usepackage{theoremref}
\usepackage[normalem]{ulem}
\usepackage[margin=1in]{geometry}

\xyoption{all}

\newtheorem{thm}{Theorem}[section]
\newtheorem{lm}[thm]{Lemma}
\newtheorem{prop}[thm]{Proposition}
\newtheorem{cor}[thm]{Corollary}
\theoremstyle{definition}
\newtheorem{defn}[thm]{Definition}
\newtheorem{remark}[thm]{Remark}
\newtheorem{example}[thm]{Example}

\numberwithin{equation}{section}

\title{Hermitian Maass lift for General Level}
\date{}
\author{Lawrence (An Hoa) Vu}

\ifPersonalNote
	\ifPDFTeX
		\usepackage{mdframed}
		\newmdtheoremenv[
		    linewidth=2pt,
		    leftmargin=0pt,
		    innerleftmargin=0.4em,
		    rightmargin=0pt,
		    innerrightmargin=0.4em,
		    innertopmargin=-5pt,
		    innerbottommargin=3pt,
		    splittopskip=\topskip,
		    splitbottomskip=0.3\topskip,
		    skipabove=0.6\topsep
		]{pnote}{Personal note}
	\else
		\definecolor{OliveGreen}{rgb}{0,0.6,0}
		\newcounter{personaln}
		\newenvironment{pnote}{\color{OliveGreen}
		\stepcounter{personaln}
		\par\bigskip\noindent{\bfseries Personal note \arabic{personaln}:}
		\par\medskip}{\par\medskip}
	\fi
\else	
	\let\pnote\comment
	\let\endpnote\endcomment
\fi

\allowdisplaybreaks

\ifPDFTeX
\else
	\usepackage{yfonts}
	\renewcommand{\mathfrak}{\textfrak}
\fi

\newcommand{\LHS}{\text{LHS}}

\newcommand{\UHP}{\mathfrak{H}}
\newcommand{\C}{\mathbb{C}}
\newcommand{\R}{\mathbb{R}}
\newcommand{\Q}{\mathbb{Q}}
\newcommand{\Z}{\mathbb{Z}}

\newcommand{\term}[1]{\emph{#1}}

\newcommand{\Maass}{Maa\ss\ }

\newcommand{\Mat}[1]{M_{#1}}
\newcommand{\GGm}{\mathsf{G_m}}
\newcommand{\GL}[1]{\mathsf{GL}_{#1}}
\newcommand{\SL}[1]{\mathsf{SL}_{#1}}
\newcommand{\GU}[1]{\mathsf{GU}(#1,#1)}
\newcommand{\U}[1]{\mathsf{U}(#1,#1)}
\newcommand{\SU}[1]{\mathsf{SU}(#1,#1)}
\newcommand{\HerMat}[1]{\mathbb{S}_{#1}} \newcommand{\Res}{\text{Res}} 

\newcommand{\GS}{\mathbf{G}} \newcommand{\aDNu}{a_u} \newcommand{\aDNw}{a_w} 

\newcommand{\MF}{\mathfrak{M}}
\newcommand{\CF}{\mathfrak{S}}
\newcommand{\JF}{\mathfrak{J}}
\newcommand{\MFDp}{\MF_{k-1}^+(D, \chi)}
\newcommand{\MFDNp}{\MF_{k-1}^+(DN, \chi)}
\newcommand{\MFDN}{\MF_{k-1}(DN, \chi)}

\newcommand{\Matx}[1]{\begin{pmatrix} #1 \end{pmatrix}}

\newcommand{\ConjTran}[1]{#1^*} \newcommand{\ConjTranInv}[1]{\widehat{#1}} 

\newcommand{\epx}[1]{\mathsf{e}\left[#1\right]}
\newcommand{\sign}[1]{\mathsf{sign}\left[#1\right]}
\newcommand{\cc}[1]{\overline{#1}}

\newcommand{\OK}{\mathfrak{o}_K} \newcommand{\DK}{\mathfrak{d}_K} \newcommand{\Mabcd}{\Matx{a & b\\c & d}}
\newcommand{\Mxyzt}{\Matx{x & y\\z & t}}
\newcommand{\val}{\mathsf{val}}

\newcommand{\parens}[1]{\left( #1 \right)}
\newcommand{\braces}[1]{\left\{ #1 \right\}}
\newcommand{\suchthat}{\;\vline\;}
\newcommand{\tif}{\text{ if }}
\newcommand{\tand}{\text{ and }}
\newcommand{\twhere}{\text{ where }}
\newcommand{\totherwise}{\text{ otherwise }}
\newcommand{\tsince}{\text{ since }}
\newcommand{\teither}{\text{ either }}
\newcommand{\tforsome}{\text{ for some }}
\newcommand{\tor}{\text{ or }}

\newcommand{\Gm}{\Gamma}
\renewcommand{\a}{\alpha}
\renewcommand{\b}{\beta}
\newcommand{\gm}{\gamma}
\newcommand{\ld}{\lambda}
\newcommand{\eps}{\varepsilon}
\renewcommand{\theta}{\vartheta}
\renewcommand{\phi}{\varphi}

\newcommand{\dv}{\;|\;}
\newcommand{\ndv}{\;\nmid\;}
\newcommand{\smD}{\sqrt{-D}}
\newcommand{\sD}{\sqrt{D}}
\renewcommand{\gcd}{}
\newcommand{\jcb}[2]{(#1 \;|\; #2)}
\newcommand{\Tr}{\mathsf{Tr}}
\newcommand{\Nm}[1]{|#1|^2}  \newcommand{\Id}{\text{Id}}
\newcommand{\Gal}{\text{Gal}}

\renewcommand{\labelenumi}{(\roman{enumi})}

\begin{document}

\fi 
The Saito-Kurokawa lift, established by the work of many authors (classically by Maass \cite{Maass1979, Maass1979II, Maass1979III}, Andrianov \cite{Andrianov1979}, Eichler-Zagier \cite{EichlerZagier1985} for the full level and reinterpreted in representation theoretic language by Piatetski-Shapiro \cite{PiatetskiShapiro1983}), has been of interest and importance in number theory, for instance, in proving part of the Bloch-Kato conjecture by Skinner-Urban \cite{SkinnerUrban2006}, providing evidence for the Bloch-Kato conjecture by Brown \cite{Brown2007} and Agarwal-Brown \cite{AgarwalBrown2014}. For these applications, one needs a generalization of the Saito-Kurokawa lift to higher level, which was established by the work of Manickam-Ramakrishnan-Vasudevan \cite{ManickamRamakrishnan2000, ManickamRamakrishnan2002, ManickamRamakrishnanVasudevan1993}, Ibukiyama \cite{Ibukiyama2012}, Agarwal-Brown \cite{AgarwalBrown2015} and Schmidt \cite{Schmidt2007} for square-free level. (Saito-Kurokawa lift is known to exist for all level.)

The Hermitian analogue of Saito-Kurokawa lift is a lifting of elliptic modular forms to hermitian modular forms of degree 2, typically referred to as the \Maass lift in the literature, has been studied since 1980 with the paper of Kojima \cite{Kojima1982}. Such a lift is of arithmetic interest, for example, in providing evidence for the Bloch-Kato conjecture by Klosin \cite{Klosin2009}, \cite{Klosin2015} and for the $p$-adic theory. For these arithmetic applications, there is a need for the generalization of the Hermitian \Maass lift. 

As with the classical Saito-Kurokawa lift, the first construction of the Hermitian \Maass lift by Kojima was a composition of the three lifts: Let $K = \Q(i)$ and assume that $k$ is divisible by 4. Then we have
\begin{equation}
\xymatrix{
\CF_{k-1}(4, \chi) \ar[r] & \CF_{k-1}^+(4, \chi) \ar[r] & \JF_{k,1}^*(1) \ar[r] & \MF_{k,2}(1)
}
\end{equation}
where
\begin{enumerate}
\item the lift from $\CF_{k-1}(4, \chi)$ to $\CF_{k-1}^+(4, \chi)$ is an analogue of Shimura--Shitani isomorphism (but it is not an isomorphism here though and unlike Shitani's map, this map is not obtained by certain cycle integrals) and $\CF_{k-1}^+(4, \chi)$ is an analogue of Kohnen's plus space \cite{Kohnen1980};
\item the lift from $\CF_{k-1}^+(4, \chi)$ to the space of special Hermitian Jacobi forms $\JF_{k,1}^*(1)$; and finally,
\item the Hermitian analogue of the original \Maass lift from $\JF_{k,1}^*(1)$ to $\MF_{k,2}(1)$.
\end{enumerate}
See section \ref{sec:hermitian_modular_forms} for our definition and notation of the spaces and groups mentioned.

In \cite{Krieg1991}, Krieg generalized Kojima's result to general imaginary quadratic fields $K$. He established the maps
\begin{equation}
\xymatrix{
\MF_{k-1}^+(D, \chi) \ar[r] & \JF_{k,1}^*(1) \ar[r] & \MF_{k,2}(1)
}
\end{equation}
Moreover, Krieg defined the Hermitian \Maass space $\MF_{k,2}^*(1) \subset \MF_{k,2}(1)$ and he showed that the image of the second map is $\MF_{k,2}^*(1)$. Also, $\JF_{k,1}^*(1) \rightarrow \MFDp$ is an isomorphism. As for the map $\CF_{k-1}(D, \chi)$ to $\CF_{k-1}^+(D, \chi)$, Krieg remarked that if the discriminant $D$ is prime, the space of cusp forms $\CF_{k-1}(D, \chi)$ has a basis consisting of primitive form $f_1,...,f_a,f_{a+1},...,f_{a+b}$ with $f_i^\rho = f_i$ for all $i = a+1,...,a+b$ and the forms $f_i - f_i^{\rho}$ is a basis for cusp forms in $\MFDp$. Here, $f^\rho(\tau) = \cc{f(-\cc{\tau})}$ is the form obtained by complex conjugating all Fourier coefficients of $f$. Ikeda \cite{Ikeda2008} gives a generalization of this construction to arbitrary discriminant $D$, which we modify to accommodate forms of higher level (c.f. section \ref{sec:to_plus_forms}). We remark that for level $N = 1$, there are alternative approaches to the Hermitian Saito-Kurokawa lift via the theory of compatible Eisenstein series by Ikeda \cite{Ikeda2008}, via Imai's Converse Theorem \cite{Imai1980} recently by Matthes \cite{Matthes2017} (but only for $K = \Q(i)$), and theta lifting by Atobe \cite{Atobe2013}.

In this paper, we generalize this result to higher level: Let $K$ be an imaginary quadratic field of discriminant $-D$, we describe the (first two) maps
\begin{equation}
\xymatrix{
\MF_{k-1}(DN, \chi) \ar[r] & \MFDNp \ar[r] & \JF_{k,1}^*(N) \ar[r] & \MF_{k,2}(N)
}
\label{diagram:hermitian_sk_lift_composition}
\end{equation}
For general level $N$ such that $\gcd(D, N) = 1$, Berger and Klosin, in their recent work \cite{Klosin2018}, generalized the definition of Krieg's \Maass space $\MF_{k,2}^*(1)$ to that of higher level $\MF_{k,2}^*(N)$. Using a modification of Kojima and Krieg's result and Ibukiyama's construction of the classical \Maass lift for general level, they then constructed the Hermitian analogue of the \Maass lift $\JF_{k,1}^*(N) \rightarrow \MF_{k,2}(N)$ i.e. the last map in the above diagram \eqref{diagram:hermitian_sk_lift_composition} and showed that its image is the \Maass space $\MF_{k,2}^*(N)$. When $D$ is prime, they defined $\MFDNp$ and constructed a map $\JF_{k,1}^*(N) \rightarrow \MFDNp$ which they showed is injective and that it is surjective onto the space of $p$-old cusps forms when $N = p$ is a split prime in $K$.

In this article, we generalize Krieg's method to higher level to provide the surjectivity for the map in the second main result of Berger and Klosin \cite{Klosin2018} i.e. to show that in fact, $\JF_{k,1}^*(N) \rightarrow \MFDNp$ is always an isomorphism, under appropriate modification of the space $\MFDNp$ defined in \cite{Klosin2018}.

The main technical challenge to overcome is the lack of a simple set of generators for the congruence subgroup $\Gamma_0(N)$ (even though there are known generators given in \cite{Chuman1973}), typically used to reduce the verification of the functional equation satisfied by a Jacobi form to a finite set of functional equations that can be verified by analytical mean. A minor challenge is to avoid the computation of the inverse of a certain matrix $M_{u,v}(\sigma)$ which Krieg \cite{Krieg1991} and Berger-Klosin \cite{Klosin2018} used.
(In Krieg's case, the matrix involved $M_{u,v}(\sigma)$ is not too complicated since he only needs the one for $\sigma = \Matx{0 & -1 \\ 1 & 0}$, the reflection matrix. In Berger-Klosin's case, they need to exploit a relationship between the matrix $M_{\pi u, \pi v}(\sigma)$ and $M_{u, v}(\sigma)$; c.f. Section 3.2 of \cite{Klosin2018} for details.)

For the outline of the article, we recall the definition of Hermitian modular forms, the Hermitian \Maass space and Hermitian Jacobi forms in section \ref{sec:hermitian_modular_forms} as well as review the relevant results of Berger and Klosin \cite{Klosin2018} and define our lift.
Then we proceed to generalize Krieg's work, namely giving a characterization of the space of plus forms in section \ref{sec:characterization_plus_forms} and obtain an arithmetic criterion for the lift we define to be a Hermitian Jacobi form in section \ref{sec:arithmetic_condition}.
In section \ref{sec:proof_hermitian_sk_lift}, we verify that the arithmetic criterion is true; thus, establishing the main result of the article (\thref{thm:jacobi_forms_plus_form_isomorphism}). We also explain our modification of Ikeda's construction to get a surjective map $\MF_{k-1}(DN, \chi) \rightarrow \MFDNp$, thus completing the Hermitian Saito-Kurokawa lift for general level depicted in \eqref{diagram:hermitian_sk_lift_composition}. In the last section \ref{sec:hecke_equivariant}, we show that the \Maass space is stable under Hecke operator $T_p$ for inert prime $p$ in $K$.

\section{Hermitian modular forms, \Maass space and Jacobi forms}
\label{sec:hermitian_modular_forms}

\let\ifAllProof\iffalse\let\ifFullProof\iftrue\let\ifPersonalNote\iffalse

\unless\ifdefined\IsMainDocument

\documentclass[12pt]{amsart}
\usepackage{amsmath,amssymb,iftex,cancel,xy,xcolor,verbatim}
\usepackage{theoremref}
\usepackage[normalem]{ulem}
\usepackage[margin=1in]{geometry}

\xyoption{all}

\newtheorem{thm}{Theorem}[section]
\newtheorem{lm}[thm]{Lemma}
\newtheorem{prop}[thm]{Proposition}
\newtheorem{cor}[thm]{Corollary}
\theoremstyle{definition}
\newtheorem{defn}[thm]{Definition}
\newtheorem{remark}[thm]{Remark}
\newtheorem{example}[thm]{Example}

\numberwithin{equation}{section}

\title{Hermitian Maass lift for General Level}
\date{}
\author{Lawrence (An Hoa) Vu}

\ifPersonalNote
	\ifPDFTeX
		\usepackage{mdframed}
		\newmdtheoremenv[
		    linewidth=2pt,
		    leftmargin=0pt,
		    innerleftmargin=0.4em,
		    rightmargin=0pt,
		    innerrightmargin=0.4em,
		    innertopmargin=-5pt,
		    innerbottommargin=3pt,
		    splittopskip=\topskip,
		    splitbottomskip=0.3\topskip,
		    skipabove=0.6\topsep
		]{pnote}{Personal note}
	\else
		\definecolor{OliveGreen}{rgb}{0,0.6,0}
		\newcounter{personaln}
		\newenvironment{pnote}{\color{OliveGreen}
		\stepcounter{personaln}
		\par\bigskip\noindent{\bfseries Personal note \arabic{personaln}:}
		\par\medskip}{\par\medskip}
	\fi
\else	
	\let\pnote\comment
	\let\endpnote\endcomment
\fi

\allowdisplaybreaks

\ifPDFTeX
\else
	\usepackage{yfonts}
	\renewcommand{\mathfrak}{\textfrak}
\fi

\newcommand{\LHS}{\text{LHS}}

\newcommand{\UHP}{\mathfrak{H}}
\newcommand{\C}{\mathbb{C}}
\newcommand{\R}{\mathbb{R}}
\newcommand{\Q}{\mathbb{Q}}
\newcommand{\Z}{\mathbb{Z}}

\newcommand{\term}[1]{\emph{#1}}

\newcommand{\Maass}{Maa\ss\ }

\newcommand{\Mat}[1]{M_{#1}}
\newcommand{\GGm}{\mathsf{G_m}}
\newcommand{\GL}[1]{\mathsf{GL}_{#1}}
\newcommand{\SL}[1]{\mathsf{SL}_{#1}}
\newcommand{\GU}[1]{\mathsf{GU}(#1,#1)}
\newcommand{\U}[1]{\mathsf{U}(#1,#1)}
\newcommand{\SU}[1]{\mathsf{SU}(#1,#1)}
\newcommand{\HerMat}[1]{\mathbb{S}_{#1}} \newcommand{\Res}{\text{Res}} 

\newcommand{\GS}{\mathbf{G}} \newcommand{\aDNu}{a_u} \newcommand{\aDNw}{a_w} 

\newcommand{\MF}{\mathfrak{M}}
\newcommand{\CF}{\mathfrak{S}}
\newcommand{\JF}{\mathfrak{J}}
\newcommand{\MFDp}{\MF_{k-1}^+(D, \chi)}
\newcommand{\MFDNp}{\MF_{k-1}^+(DN, \chi)}
\newcommand{\MFDN}{\MF_{k-1}(DN, \chi)}

\newcommand{\Matx}[1]{\begin{pmatrix} #1 \end{pmatrix}}

\newcommand{\ConjTran}[1]{#1^*} \newcommand{\ConjTranInv}[1]{\widehat{#1}} 

\newcommand{\epx}[1]{\mathsf{e}\left[#1\right]}
\newcommand{\sign}[1]{\mathsf{sign}\left[#1\right]}
\newcommand{\cc}[1]{\overline{#1}}

\newcommand{\OK}{\mathfrak{o}_K} \newcommand{\DK}{\mathfrak{d}_K} \newcommand{\Mabcd}{\Matx{a & b\\c & d}}
\newcommand{\Mxyzt}{\Matx{x & y\\z & t}}
\newcommand{\val}{\mathsf{val}}

\newcommand{\parens}[1]{\left( #1 \right)}
\newcommand{\braces}[1]{\left\{ #1 \right\}}
\newcommand{\suchthat}{\;\vline\;}
\newcommand{\tif}{\text{ if }}
\newcommand{\tand}{\text{ and }}
\newcommand{\twhere}{\text{ where }}
\newcommand{\totherwise}{\text{ otherwise }}
\newcommand{\tsince}{\text{ since }}
\newcommand{\teither}{\text{ either }}
\newcommand{\tforsome}{\text{ for some }}
\newcommand{\tor}{\text{ or }}

\newcommand{\Gm}{\Gamma}
\renewcommand{\a}{\alpha}
\renewcommand{\b}{\beta}
\newcommand{\gm}{\gamma}
\newcommand{\ld}{\lambda}
\newcommand{\eps}{\varepsilon}
\renewcommand{\theta}{\vartheta}
\renewcommand{\phi}{\varphi}

\newcommand{\dv}{\;|\;}
\newcommand{\ndv}{\;\nmid\;}
\newcommand{\smD}{\sqrt{-D}}
\newcommand{\sD}{\sqrt{D}}
\renewcommand{\gcd}{}
\newcommand{\jcb}[2]{(#1 \;|\; #2)}
\newcommand{\Tr}{\mathsf{Tr}}
\newcommand{\Nm}[1]{|#1|^2}  \newcommand{\Id}{\text{Id}}
\newcommand{\Gal}{\text{Gal}}

\renewcommand{\labelenumi}{(\roman{enumi})}

\begin{document}

\fi 
Recall that we fix an imaginary quadratic field $K = \Q(\sqrt{-D})$ of discriminant $-D = -D_K$. We fix an embedding $K \rightarrow \C$ to treat $K$ as a subfield of $\C$ and identify the non-trivial automorphism of $\Gal(K/\Q)$ as the restriction of complex conjugation of $\C$ to $K$. Thus, if $x \in K$ then its norm $\Nm{x} = x \cc{x} \in \Q$ makes sense. We denote by $\OK$ the ring of integers of $K$ and by $\DK := \frac{\OK}{\sqrt{-D}}$ the inverse different of $K$.

Throughout the article, we shall assume the weight $k$ is divisible by $|\OK^\times|$, the number of units in $K$. In particular, $k$ is even. (When $k$ is not divisible by $|\OK^\times|$, Berger and Klosin showed that the \Maass space is zero.) In this section, we recall the notion of Hermitian modular form, Hermitian Jacobi form and the Hermitian \Maass space as well as the main results of \cite{Klosin2018} relating to these forms.

\subsection{Hermitian modular forms}

Let $R$ be a ring and $S$ an $R$-algebra equipped with an involution $\iota : S \rightarrow S$. If $A$ is any $R$-algebra then $S \otimes_R A$ has an obvious induced involution $\iota \otimes \Id$.
For each matrix $g \in M_n(S \otimes_R A)$, we denote $g^{\iota}$ the matrix $(\iota \otimes \Id) (g_{ij})$ obtained by applying the induced involution to every entry of $g$. We also denote $g^t$ the transpose of $g$; and when $\iota$ could be inferred without ambiguity, $\ConjTran{g}$ is used to denote $(g^{\iota})^t$.

If the triple $[R, S, \iota]$ is reasonable (namely, so that the subsequently mentioned Weil restriction of scalars $\Res_{S/R} \GL{2n/S}$ exists, for instance, $S$ is a finitely generated projective $R$-module), we have the unitary similitude \emph{algebraic} groups defined over $R$ and its subgroups:
\begin{align*}
\GU{n}_{[R,S,\iota]} &= \{ g \in \Res_{S/R} \; \GL{2n/S} \suchthat \ConjTran{g} J_{2n} g = \mu(g) J_{2n}, \mu(g) \in \GGm_{/R} \}\\
\U{n}_{[R,S,\iota]} &= \{ g \in \GU{n} \suchthat \mu(g) = 1 \}\\
\SU{n}_{[R,S,\iota]} &= \{ g \in \U{n} \suchthat \det g = 1 \}
\end{align*}
where $J_{2n} = \Matx{0 & -I_n \\ I_n & 0}$, $I_n$ is the $n \times n$ identity matrix and $\GL{2n/S}, \GGm_{/R}$ are the general linear group (over $S$) and multiplicative group (over $R$) respectively. Here, $\Res_{S/R}$ denotes Weil restriction of scalars.

The case we are interested in is $R = \Z, A = \OK$ and $\iota$ being complex conjugation i.e. the restriction of the non-trivial element of $\Gal(K/\Q)$ to $\OK$. Thus, we omit the subscript $[R,S,\iota]$ if $R = \Z, S = \OK$ and $\iota$ is complex conjugation. Also, due to frequent usage, we also denote $G_n = \GU{n}$, $U_n = \U{n}$.

\newcommand{\bi}{\mathbf{i}}
\newcommand{\HH}{\mathbb{H}}

For $n > 1$, let $\bi_n := i I_n$ and define the hermitian upper half-plane of degree $n$ as the complex manifold
$$\HH_n := \{Z \in M_n(\C) \suchthat -\bi_n (Z - \cc{Z}^t) > 0\}$$
where for a matrix $A \in M_n(\C)$, the notation $A > 0$ means that $A \in M_n(\R)$ and $A$ is positive-definite. The subgroup
$$G_n^+(\R) := \{g \in G_n(\R) \suchthat \mu(g) > 0\}$$
acts transitively on $\HH_n$ via the familiar formula
$$g \; Z := (A Z + B) (C Z + D)^{-1}$$
if $g = \Matx{A & B \\ C & D}$ where $A, B, C, D \in M_n(\C)$.

A subgroup $\Gamma \subset G_n^+(\R)$ is called a \term{congruence subgroup} if
\begin{enumerate}
\item $\Gamma$ is \term{commensurable} with $U_n(\Z)$, and
\item $\Gamma \supseteq \Gamma_n(N) := \{g \in \U{n}(\Z) | g \equiv I_{2n} \bmod N\}$ for some positive $N \in \Z$.
\end{enumerate}

For $g \in G_n^+(\R)$ and $Z \in \HH_n$, we define the factor of automorphy
$$j(g; Z) := \det(C Z + D)$$
and for integer $k \geq 0$ and function $F : \HH_n \rightarrow \C$, we define the action $|_{k} \; g$ by
$$F |_{k} \; g \; (Z) := j(g;Z)^{-k} \; F(g Z).$$

\begin{defn}
A function $F : \HH_n \rightarrow \C$ is called a \term{Hermitian semi-modular form} of weight $k$ and level $\Gamma$ if $$F|_k \; g = F \qquad \text{ for all }g \in \Gamma.$$
If $F$ is further holomorphic then it is called a \term{Hermitian modular form}. We denote
$\MF_k'(\Gamma)$ for the space of all such Hermitian semi-modular forms and by
$\MF_k(\Gamma) \subset \MF_k'(\Gamma)$ for the subspace of Hermitian modular forms.
\end{defn}

Subsequently, we are only interested in the case where congruence subgroup $\Gamma = \Gamma_{0,n}(N)$ with
$$\Gamma_{0,n}(N) := \left\{\Matx{A & B\\C & D} \in U(n,n)(\Z) \suchthat C \equiv 0 \bmod N \OK \right\}$$
so we denote
$$\MF_{k,n}(N) := \MF_k(\Gamma_{0,n}(N))$$ 

The standard notations $\MF_k(N, \psi)$ and $\CF_k(N, \psi)$ are used to denote the space of elliptic modular (resp. cusp) forms on the standard Hecke congruence subgroup
$$\Gamma_0(N) := \braces{ \Mabcd \in \SL{2}(\Z) \suchthat c \equiv 0 \bmod N }$$
and nybentypus $\psi$ where $\psi$ is a Dirichlet character modulo $N$.

Any Hermitian modular form of level $N$ possesses Fourier expansion
$$F(Z) = \sum_{T \in S_n(\Q)} c_F(T) \; \epx{\Tr(T Z)}$$
where $c_F(T) \in \C$,
$$S_n = L_n^\vee := \{x \in \HerMat{n}(\Q) \suchthat \Tr(x L) \subset \Z\}$$
is the dual lattice of the lattice $L_n = \HerMat{n}(\Z) \cap M_n(\OK)$ of integral Hermitian matrices (with respect to the trace pairing $\Tr$) and
$$\HerMat{n} := \{h \in \Res_{\OK/\Z} \, \Mat{n/\OK} \suchthat h^* = h\}$$
is the $\Z$-group scheme of Hermitian matrices. Also, we adopt the notation
$\epx{x} := e^{2 \pi i x}.$
Explicitly, one has, for example
$$S_2 = \braces{\Matx{\ell & t \\ \cc{t} & m} \in \Mat{2}(K) \suchthat \ell, m \in \Z; t \in \DK}.$$

\begin{defn}
The \term{\Maass space} $\MF_k^*(N) \subseteq \MF_{k,2}(N)$ consists of forms $F \in \MF_{k,2}(N)$ such that there exists a function $\alpha_F : \Z_{\geq 0} \rightarrow \C$ so that the Fourier coefficients of $F$ can be described using $\alpha_F$ in the following way
$$c_F(T) = \sum_{\underset{\gcd(d,N) = 1}{d \in \Z_+, \; d \dv \epsilon(T)}} d^{k-1} \alpha_F\left(\frac{D_K \det T}{d^2}\right)$$
for all $T \in S_2(\Z), T \geq 0, T \not= 0$ where $c_F(T)$ is the $T$-th Fourier coefficient of $F$ and
$$\epsilon(T) := \max\left\{q \in \Z_+ \;|\; \frac{1}{q} T \in S_2(\Z)\right\}.$$ \thlabel{defn:Maass_space}
\end{defn}

\subsection{Hermitian Jacobi forms}

We recall the notion of Jacobi forms from \cite{Klosin2018}. First, any $M \in \U{1}(\Z)$ can be written as $\epsilon A$ for some $A \in \SL{2}(\Z)$ and $\epsilon \in \OK^\times$. Let $\UHP$ denote the complex upper half plane. We define the action of the Jacobi group $\U{1}(\Z) \ltimes \OK^2$ on the functions $\phi : \UHP \times \C^2 \rightarrow \C$ by
\begin{align*}
\phi|_{k,m}[\epsilon A] &:= (c\tau + d)^{-k} \epx{- m \frac{czw}{c\tau + d} } \phi\left( \frac{a \tau + b}{c \tau + d}, \frac{\epsilon z}{c \tau + d}, \frac{\cc{\epsilon} w}{c \tau + d} \right)\\
\phi|_{k,m}[\lambda, \mu] &:= \epx{ m (\Nm{\lambda} \tau + \cc{\lambda} z + \lambda w) } \phi(\tau, z + \lambda \tau + \mu, w + \cc{\lambda} \tau + \cc{\mu})
\end{align*}
As noted in \cite{Klosin2018}, the $\epsilon$ in the expression $M = \epsilon A$ is not unique but the action described here is well-defined as long as $k$ is divisible by $|\OK^\times|$.

\begin{defn}
A \term{Hermitian Jacobi forms of weight $k$, index $m$ and level $N$} is a holomorphic function $\phi : \UHP \times \C^2 \rightarrow \C$ such that
\begin{enumerate}
\item $\phi|_{k,m}[\epsilon A] = \phi$ for all $A \in \Gamma_0(N)$;
\item $\phi|_{k,m}[\lambda, \mu] = \phi$ for all $\lambda, \mu \in \OK$; and
\item for each $M \in \SL{2}(\Z)$, the function $\phi|_{k,m}[M]$ has a Fourier expansion of the form
$$\phi|_{k,m}[M](\tau, z, w) = \sum_{\underset{\nu \Nm{t} \leq \ell m}{\ell \in \Z_{\geq 0}, \; t \in \DK^{-1}}} c_\phi^M(\ell, t) \epx{\frac{\ell}{\nu} \tau + \cc{t} z + t w}$$
where $\nu = \nu(M) \in \Z_+$ depends on $M$ and $\nu(I_2) = 1$. The $c_\phi^M(\ell, t) \in \C$ are the Fourier coefficients. We drop the superscript for $M = I_2$ i.e. $c_\phi(\ell, t) = c_\phi^{I_2}(\ell, t)$.
\end{enumerate}
We denote by $\JF_{k,m}(N)$ for the space of Hermitian Jacobi forms of weight $k$, index $m$ and level $N$. \label{defn:jacobi_forms}
\end{defn}

Writing a matrix $Z \in \HH_2$ as $Z = \Matx{\tau & z \\ w & \tau^*}$, we can think of a Hermitian modular form as a function of four complex variables $(\tau, z, w, \tau^*)$ where we can collect its Fourier expansion to express
$$F(Z) = \sum_{m = 0}^{\infty} \phi^F_m(\tau, z, w) \epx{m \tau^*}.$$
It is well--known (c.f. \cite{Krieg1991} Section 3 for level $N = 1$ and \cite{Haverkamp1995} Satz 7.1 for general level) that the functions $\phi^F_m$'s are Jacobi forms of index $m$, normally referred to as Fourier--Jacobi coefficients of $F$. Proposition 2.4 of \cite{Klosin2018} says that just like the Siegel case, a form $F$ in the Hermitian \Maass space $\MF_k^*(N)$ is completely determined by its first Jacobi form $\phi_1^F$ and one of the major results of the same paper (Theorem 2.8) shows that in fact $\MF_k^*(N) \cong \JF_{k,1}^*(N)$ where $\JF_{k,1}^*(N) \subset \JF_{k,1}(N)$ is the subspace of special Jacobi forms.

\begin{defn}
A form $\phi \in \JF_{k,1}(N)$ is called a \term{special Jacobi form} if its Fourier coefficients $c_\phi(\ell, t)$ only depend on $\ell - \Nm{t}$.
\end{defn}

Thus, we have the analogue of the classical \Maass lift for Hermitian modular forms.

\subsection{Theta decomposition}

Recall that $\DK = \frac{i}{\sD} \OK$ is the different ideal of $\OK$. Let $[\DK] := \frac{i}{\sD} \OK / \OK$. For each $u \in [\DK]$, we define the theta function
\begin{align*}
\theta_u(\tau, z, w) := \sum_{a \in u + \OK} \epx{\Nm{a} \tau + \cc{a} z + a w}.
\end{align*}
For fixed $\tau$, the theta functions are linearly independent (c.f. Haverkamp \cite{Haverkamp1995} Proposition 5.1) and they transformed in the following way:
$$\theta_u|_{1,1} \; \sigma = \sum_{v \in [\DK]} M_{u,v}(\sigma) \theta_v$$
for each $\sigma \in \SL{2}(\Z)$ where
\begin{align}
M_{u,v}\Mabcd := \begin{cases}
\frac{-i}{c\sD} \displaystyle \sum_{\gamma \in u + \OK/c\OK} \epx{\frac{a |\gamma|^2 - \gamma \cc{v} - \cc{\gamma} v + d |v|^2}{c}} & \tif c \not= 0;\\
\sign{a} \delta_{u, a v} \epx{ab|u|^2} & \totherwise
\end{cases}
\label{eq:transformation_matrix_theta_series}
\end{align}
according to \cite{Klosin2018} Lemma 3.1 or the Lemma in Section 4 of \cite{Krieg1991}. Since the transformation $|_{k,m} \sigma$ is associative, one could think of $M$ as a group homomorphism $\SL{2}(\Z) \rightarrow \GL{d}(\C)$ where $d$ is the number of classes in $[\DK]$ sending each $\sigma$ to the matrix $(M_{u,v}(\sigma))_{u,v \in [\DK]}$ whose rows and columns are indexed by $[\DK]$ should we fix an ordering of $[\DK]$.

By section 3 of \cite{Klosin2018}, any special Jacobi form $\phi \in \JF_k^*(N)$ can be expressed uniquely as a combination
\begin{align}
\phi = \sum_{u \in [\DK]} f_u \theta_u \qquad \twhere \qquad f_u(\tau) = \sum_{\underset{\ell \equiv -D\Nm{u} \bmod D}{\ell \geq 0}} \a_\phi^*(\ell) \epx{\frac{\ell\tau}{D}}.
\label{eq:theta_decomposition}
\end{align}
Here, $\a_\phi^*(D(m - \Nm{u})) = c_\phi^{I_2}(m, u)$ are the Fourier coefficients of $\phi$ as in definition \ref{defn:jacobi_forms}. We call \eqref{eq:theta_decomposition} the \emph{theta decomposition} of $\phi$.

\subsection{The injection $\JF_k^*(N) \rightarrow \MFDNp$}

Following Krieg, we decompose the character $\chi_K = \prod_{p \dv D} \chi_p$. More concretely, $\chi_p(n) = \jcb{n}{p}$ is just the Legendre symbol mod $p$ for odd prime $p \dv D$ whereas
$$\chi_2(n) = \begin{cases}
\jcb{-4}{n} = (-1)^{(n-1)/2} &\tif 4 \dv D\tand 8 \ndv D,\\
\jcb{-8}{n} = (-1)^{(n^2-4n+3)/8} &\tif 8 \dv D \tand \frac{D}{8} \equiv 1 \bmod 4,\\
\jcb{8}{n} = (-1)^{(n^2-1)/8} &\tif 8 \dv D \tand \frac{D}{8} \equiv 3 \bmod 4.
\end{cases}$$
Then we define
$$a_D(\ell) := |\{u \in [\DK] \suchthat D\Nm{u} \equiv -\ell \bmod D\}|$$
which could be computed as
$$a_D(\ell) = \prod_{p \dv D} (1 + \chi_p(-\ell))$$
according to Krieg \cite{Krieg1991}, Section 4, equation (5). Now, we define the space
$$\MFDNp := \braces{g \in \MFDN \suchthat \a_\ell(g) = 0 \tif a_D(-\ell) = 0}$$
similar to that of \cite{Krieg1991} but for arbitrary $N$ as opposed to $N = 1$ in \cite{Krieg1991}. When $D$ is prime, this is the same as the space defined in \cite{Klosin2018}. We state the following generalization of proposition 3.5 of \cite{Klosin2018}:
\begin{prop}
Suppose that $\gcd(D, N) = 1$. Then the map $\JF_k^*(N) \rightarrow \MFDNp$ given by $\phi \mapsto f := f_0|_{k-1} W_D$ is an injection where $W_D = \Matx{1 & 0 \\ N & 1}\Matx{D & y \\ 0 & 1}$ and $y = N^{-1} \bmod D$. The Fourier coefficients of $f$ satisfy
$$a_\ell(f) = i \frac{a_D(\ell)}{\sD} \chi(N) \a_\phi^*(\ell).$$
\thlabel{prop:jacobi_to_elliptic}
\end{prop}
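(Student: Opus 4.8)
The plan is to work through the theta decomposition of $\phi$ and to read off everything about $f$ from the single arithmetic function $\a_\phi^*$. Write $\phi=\sum_{u\in[\DK]}f_u\theta_u$ as in \eqref{eq:theta_decomposition}; in particular the $u=0$ component is $f_0(\tau)=\sum_{m\geq 0}\a_\phi^*(Dm)\,\epx{m\tau}$. Since the $\theta_v$ are linearly independent for fixed $\tau$ and transform by $\theta_u|_{1,1}\sigma=\sum_v M_{u,v}(\sigma)\theta_v$, and $M\colon\SL{2}(\Z)\to\GL{d}(\C)$ is a genuine homomorphism, the identity $\phi|_{k,1}[A]=\phi$ for $A\in\Gamma_0(N)$ is equivalent to the vector-valued functional equation $f_u|_{k-1}A=\sum_v M_{v,u}(A^{-1})f_v$. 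This is the observation that lets me avoid inverting any $M(\sigma)$: the entries of $M(A)^{-1}=M(A^{-1})$ are themselves the exponential sums \eqref{eq:transformation_matrix_theta_series}.

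Next I would factor $W_D=\beta\delta$ with $\beta=\Matx{1 & 0\\ N & 1}\in\Gamma_0(N)$ and $\delta=\Matx{D & y\\ 0 & 1}$, so that $f=(f_0|_{k-1}\beta)|_{k-1}\delta$. The functional equation gives $f_0|_{k-1}\beta=\sum_{v\in[\DK]}M_{v,0}(\beta^{-1})f_v$, and since $\delta$ is upper triangular the operator $|_{k-1}\delta$ only rescales $q$-expansions ($f_v|_{k-1}\delta$ is a constant multiple of $f_v(D\tau+y)$, whose exponents lie in $\Z$). Comparing the coefficient of $\epx{\ell\tau}$ on the two sides then produces a formula
$$a_\ell(f)=\text{(const)}\cdot\a_\phi^*(\ell)\cdot\epx{\tfrac{\ell y}{D}}\cdot\!\!\sum_{\substack{v\in[\DK]\\ D\Nm{v}\equiv-\ell\ (D)}}\!\!M_{v,0}(\beta^{-1}),$$
so the whole computation reduces to evaluating $M_{v,0}(\beta^{-1})=\tfrac{i}{N\sD}\sum_{\gamma\in v+\OK/N\OK}\epx{-\Nm{\gamma}/N}$.

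Here $\gcd(D,N)=1$ does the work. Because multiplication by $N$ is invertible on $[\DK]$, after translating $v$ one can complete the square in $\OK/N\OK$ and reduce $M_{v,0}(\beta^{-1})$ to the classical Gauss sum $\sum_{\gamma\in\OK/N\OK}\epx{-\Nm{\gamma}/N}$ of the norm form of $K$ — which evaluates to $\chi(N)N$ by the standard formula for Gauss sums of binary quadratic forms of fundamental discriminant — times a phase that depends on $v$ only through $D\Nm{v}\bmod D$. Hence all $a_D(\ell)$ terms of the inner sum coincide, the factor $a_D(\ell)$ and $\chi(N)$ and $\tfrac{i}{\sD}$ survive, and once the phases and the determinant normalization of the slash are accounted for one is left with exactly $a_\ell(f)=i\tfrac{a_D(\ell)}{\sD}\chi(N)\a_\phi^*(\ell)$. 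This formula already gives injectivity — $f=0$ forces $a_D(\ell)\a_\phi^*(\ell)=0$ for all $\ell$, hence $\a_\phi^*\equiv0$, hence every $f_u=0$ and $\phi=0$ — and, combined with the description of the support of $\a_\phi^*$, it gives the plus-space condition $a_\ell(f)=0$ whenever $a_D(-\ell)=0$.

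It remains to check $f\in\MFDN$. Holomorphy on $\UHP$ is inherited from $f_0$, and boundedness at the cusps follows from part (iii) of the definition of a Jacobi form applied to the finitely many $\phi|_{k,1}[M]$. For modularity, use $\Gamma_0(DN)=\Gamma_0(D)\cap\Gamma_0(N)$: for $\gamma\in\Gamma_0(DN)$ the conjugate $W_D\gamma W_D^{-1}$ is integral and lies in $\Gamma_0(N)$, and running it through the same vector-valued functional equation and Gauss-sum identities yields $f|_{k-1}\gamma=\chi(\gamma)f$. I expect this last verification to be the main obstacle: there is no convenient generating set for $\Gamma_0(N)$, so one is forced to carry the bookkeeping with $M$ through for a general $\gamma$ rather than a finite list of relations, and it is precisely by keeping $M(A^{-1})$ in place of an abstract matrix inverse, and by using $\gcd(D,N)=1$ to decouple the level-$D$ behaviour (which is responsible for the nebentypus $\chi$) from the level-$N$ behaviour (which comes from the level of $\phi$), that the computation stays manageable.
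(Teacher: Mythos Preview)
Your approach is essentially the one the paper (following Berger--Klosin) uses: factor $W_D$ as $\beta\delta$ with $\beta=\Matx{1&0\\N&1}$, apply the theta transformation law to write $f_0|_{k-1}\beta=\sum_v M_{v,0}(\beta^{-1})f_v$, and reduce the computation of $M_{v,0}(\beta^{-1})$ to the norm-form Gauss sum $\sum_{\gamma\in\OK/N\OK}\epx{t\Nm{\gamma}/N}=\chi(N)N$; the Fourier-coefficient formula, injectivity, and the plus-space condition all drop out exactly as you describe. The paper's own proof is in fact just a pointer to \cite{Klosin2018} together with the observation that this Gauss-sum identity (their Lemma~3.3) continues to hold without the hypotheses ``$D$ prime'' and ``$N$ odd'', so you have filled in precisely the details the paper omits.

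One small remark on your last paragraph: the difficulty you anticipate---the lack of generators for $\Gamma_0(N)$ and the need to track $M(A^{-1})$ for general $A$---is the obstacle for the \emph{converse} direction (showing that the candidate lift $\phi_g$ is a Jacobi form), not for this proposition. Here you already know $\phi$ is a Jacobi form, so the vector-valued transformation law for $(f_u)_u$ is available for every $A\in\Gamma_0(N)$ by hypothesis; checking $f\in\MFDN$ then only requires, for $\gamma\in\Gamma_0(DN)$, computing the single diagonal entry $M_{0,0}$ of $M(W_D\gamma W_D^{-1})$ (or equivalently a short Gauss-sum manipulation), which is considerably lighter than what you suggest.
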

Note that we are basically dropping the assumptions $D$ prime and $N$ odd in \cite{Klosin2018}. The proof is the same as the proof given in \cite{Klosin2018}. One only needs to check that the simple lemma used in the proof also works for non-prime $D$ and $N$ being even number:
\begin{lm}[Lemma 3.3 in \cite{Klosin2018}] Suppose $\gcd(D, N) = 1$ and $t \in \Z$ such that $\gcd(t, N) = 1$. Then
$$\sum_{\gm \in \OK/N \OK} \epx{\frac{t |\gm|^2}{N}} = \chi(N) N.$$
\end{lm}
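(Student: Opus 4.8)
The plan is to evaluate the exponential sum $S := \sum_{\gm \in \OK/N\OK}\epx{\frac{t\Nm{\gm}}{N}}$ directly, by reducing it to a product of local Gauss sums and computing each. First I would invoke the Chinese Remainder Theorem: for $N = \prod_i p_i^{r_i}$ the ring isomorphism $\OK/N\OK \cong \prod_i \OK/p_i^{r_i}\OK$ is compatible with the norm form, and writing $\frac1N \equiv \sum_i \frac{a_i}{p_i^{r_i}} \pmod 1$ with each $a_i$ coprime to $p_i$ and setting $t_i := t a_i$ (automatically coprime to $p_i$ since $\gcd(t,N)=1$), the sum factors as a product $S = \prod_i S(p_i^{r_i},t_i)$ of sums of the same shape. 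Since $\chi(N)N = \prod_i \chi(p_i)^{r_i}p_i^{r_i}$ is multiplicative in exactly the same way, it suffices to treat $N = p^r$ a prime power with $p \nmid D$ and $p \nmid t$ (note that the factorization of $D$ now plays no role, which is why non-prime $D$ is not an obstacle). The hypothesis $p \nmid D$ says $p$ is unramified in $K$, so $\chi(p) = +1$ if $p$ splits and $\chi(p) = -1$ if $p$ is inert, and I would handle these two cases separately.

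In the split case $p\OK = \mathfrak p \bar{\mathfrak p}$, and under a ring isomorphism $\OK/p^r\OK \cong \Z/p^r\Z \times \Z/p^r\Z$ complex conjugation becomes the coordinate swap, so $\Nm{\gm}$ corresponds to $xy$ when $\gm \leftrightarrow (x,y)$. Then $S(p^r,t) = \sum_{x,y}\epx{\frac{txy}{p^r}}$ collapses — for fixed $x$ the sum over $y$ vanishes unless $p^r \mid tx$, equivalently $x\equiv 0$ — leaving $S(p^r,t) = p^r = \chi(p^r)p^r$. This step is routine, and works verbatim for $p=2$.

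The inert case is the real content. Here $\OK/p^r\OK$ is a local ring with residue field $\mathbb F_{p^2}$, and I would split the sum according to whether $\gm$ is a unit. The unit part equals $|\ker N|$ times the Ramanujan-type sum $\sum_{n\bmod p^r,\, p\nmid n}\epx{\frac{tn}{p^r}}$, which vanishes for $r \ge 2$ because $\gcd(t,p)=1$; the non-unit part, writing $\gm = p\gm'$ and using $\Nm{p\gm'} = p^2\Nm{\gm'}$ together with the fact that reduction $\OK/p^{r-1}\OK \to \OK/p^{r-2}\OK$ has fibres of size $p^2$, equals $p^2 S(p^{r-2},t)$. This yields the recursion $S(p^r,t) = p^2 S(p^{r-2},t)$ for $r\ge 2$, with base cases $S(p^0,t) = 1$ and $S(p,t) = 1 + (p+1)\sum_{n\in\mathbb F_p^\times}\epx{\frac{tn}{p}} = 1-(p+1) = -p$ (here $p+1 = |\ker(\mathbb F_{p^2}^\times \to \mathbb F_p^\times)|$). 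Induction then gives $S(p^r,t) = (-1)^r p^r = \chi(p)^r p^r$. Finally I would recall that $\chi = \chi_K$ is the Kronecker symbol $\left(\tfrac{-D}{\,\cdot\,}\right)$, so the $\pm$ signs indeed match the definition $\chi_K = \prod_{p\mid D}\chi_p$ used in the text.

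The step I expect to be the main obstacle — and the one genuinely new relative to \cite{Klosin2018}, which assumed $N$ odd and $D$ prime — is justifying in the inert case that the norm induces a \emph{surjection} $(\OK/p^r\OK)^\times \to (\Z/p^r\Z)^\times$ with kernel of order $p^r + p^{r-1}$, uniformly in $p$, including $p=2$. This is standard for unramified local extensions: on residue fields it is the surjection $\mathbb F_{p^2}^\times \to \mathbb F_p^\times$, and on the higher unit filtration it follows from $\Tr_{K/\Q}(\OK) = \Z$ surjecting onto every $\Z/p^j\Z$ — which is exactly where $p\nmid D$ enters, since for $p=2$ it forces $D \equiv 3 \pmod 4$, $\OK = \Z[\tfrac{1+\smD}{2}]$, and $\Tr(\tfrac{1+\smD}{2}) = 1$. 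Everything else (the CRT bookkeeping, the split-case collapse, the elementary sum over $\mathbb F_p$) is routine. As a shortcut for odd $N$, one may instead diagonalize the norm form over $\Z/N\Z$ as $X^2 + \tfrac{D}{4}Y^2$ — legitimate since its $x^2$-coefficient $\Nm{1}=1$ is a unit there — and apply the classical evaluation $\sum_{X\bmod N}\epx{\frac{aX^2}{N}} = \left(\tfrac{a}{N}\right)\varepsilon_N\sqrt N$ with $\varepsilon_N\in\{1,i\}$, obtaining $S = \left(\tfrac{-D}{N}\right)N = \chi(N)N$ in one line; but since this fails for even $N$, the prime-power reduction above is still needed to cover $p=2$.
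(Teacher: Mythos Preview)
Your proof is correct. The paper's proof is terser and organized differently: after the same CRT reduction to prime powers, it distinguishes by the \emph{parity} of $p$ rather than by splitting type. For odd prime powers it cites Haverkamp to write the sum as a product of two one-variable quadratic Gauss sums (essentially your diagonalization shortcut, which works precisely because $2$ is invertible), and for $N=2^r$ it says only ``one can use induction on $r$'' without further detail. Your split/inert dichotomy is more uniform---the same recursion $S(p^r,t)=p^2 S(p^{r-2},t)$ handles all inert primes including $p=2$, and the split case is a one-line collapse---and more self-contained, since you actually carry out the induction and verify the key surjectivity of the local norm map $(\OK/p^r\OK)^\times \to (\Z/p^r\Z)^\times$. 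The paper's approach has the virtue of brevity via citation; yours makes the mechanism transparent and shows explicitly why neither the compositeness of $D$ nor the parity of $N$ causes trouble.
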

\begin{proof}
By Chinese Remainder Theorem, the statement can be reduced to the case where $N$ is a prime power. When $N$ is an odd prime power, the left hand side is a product of two Gauss sum as in Haverkamp \cite{Haverkamp1995}, Lemma 0.4. When $N = 2^r$ is a power of 2, one can use induction on $r$.
\end{proof}

Notice that \thref{prop:jacobi_to_elliptic} already tells us what the lifting should be, if it exists. Suppose we have $g \in \MFDNp$ and $\phi_g \in \JF_k^*(N)$ is its pre-image. Write $\phi_g = \sum f_{u,g} \theta_u$ as in  \eqref{eq:theta_decomposition}. Then by \thref{prop:jacobi_to_elliptic}, $g = f_{0,g} |_{k-1} W_D$ and the Fourier coefficients of $g$ satisfies $a_\ell(g) = i \frac{a_D(\ell)}{\sD} \chi(N) \a_{\phi_g}^*(\ell)$ which leads to
\begin{align*}
\a_{\phi_g}^*(\ell) = \frac{-i \sD}{a_D(\ell) \chi(N)} a_\ell(g)
\label{eq:Fourier_coefficient_for_phi_g}
\end{align*}
and the decomposition \eqref{eq:theta_decomposition} tells us that
\begin{align*}
f_{u,g}(\tau) &= \sum_{\underset{\ell \equiv -D\Nm{u} \bmod D}{\ell \geq 0}} \a_{\phi_g}^*(\ell) \epx{\frac{\ell\tau}{D}}\\
&= \sum_{\underset{\ell \equiv -D\Nm{u} \bmod D}{\ell \geq 0}} \frac{-i \sD}{a_D(\ell) \chi(N)} a_\ell(g) \epx{\frac{\ell\tau}{D}}\\
&= \chi(N) \frac{-i \sD}{a_D(-D\Nm{u})} \sum_{\underset{\ell \equiv -D \Nm{u} \bmod D}{\ell = 0}}^{\infty} a_\ell(g) \epx{\frac{\ell \tau}{D}}.
\end{align*}
Note that this is the lift given by Krieg in Section 6 of \cite{Krieg1991} for level $N = 1$, twisted by the factor of $\chi(N)$ to account for the level.

In section \ref{sec:proof_hermitian_sk_lift}, we will show that for any $g \in \MFDNp$, the function $\phi_g = \sum f_{u,g} \theta_u$ with the $f_{u, g}$ as derived above is indeed a Hermitian Jacobi form; and so the map in \thref{prop:jacobi_to_elliptic} is an isomorphism. To do that, we generalize Krieg's characterization of plus forms and obtain the arithmetical condition for $\phi_g$ to be a Hermitian Jacobi form.

\section{Characterization of plus forms}
\label{sec:characterization_plus_forms}

\let\ifAllProof\iffalse\let\ifFullProof\iftrue\let\ifPersonalNote\iffalse

\unless\ifdefined\IsMainDocument

\documentclass[12pt]{amsart}
\usepackage{amsmath,amssymb,iftex,cancel,xy,xcolor,verbatim}
\usepackage{theoremref}
\usepackage[normalem]{ulem}
\usepackage[margin=1in]{geometry}

\xyoption{all}

\newtheorem{thm}{Theorem}[section]
\newtheorem{lm}[thm]{Lemma}
\newtheorem{prop}[thm]{Proposition}
\newtheorem{cor}[thm]{Corollary}
\theoremstyle{definition}
\newtheorem{defn}[thm]{Definition}
\newtheorem{remark}[thm]{Remark}
\newtheorem{example}[thm]{Example}

\numberwithin{equation}{section}

\title{Hermitian Maass lift for General Level}
\date{}
\author{Lawrence (An Hoa) Vu}

\ifPersonalNote
	\ifPDFTeX
		\usepackage{mdframed}
		\newmdtheoremenv[
		    linewidth=2pt,
		    leftmargin=0pt,
		    innerleftmargin=0.4em,
		    rightmargin=0pt,
		    innerrightmargin=0.4em,
		    innertopmargin=-5pt,
		    innerbottommargin=3pt,
		    splittopskip=\topskip,
		    splitbottomskip=0.3\topskip,
		    skipabove=0.6\topsep
		]{pnote}{Personal note}
	\else
		\definecolor{OliveGreen}{rgb}{0,0.6,0}
		\newcounter{personaln}
		\newenvironment{pnote}{\color{OliveGreen}
		\stepcounter{personaln}
		\par\bigskip\noindent{\bfseries Personal note \arabic{personaln}:}
		\par\medskip}{\par\medskip}
	\fi
\else	
	\let\pnote\comment
	\let\endpnote\endcomment
\fi

\allowdisplaybreaks

\ifPDFTeX
\else
	\usepackage{yfonts}
	\renewcommand{\mathfrak}{\textfrak}
\fi

\newcommand{\LHS}{\text{LHS}}

\newcommand{\UHP}{\mathfrak{H}}
\newcommand{\C}{\mathbb{C}}
\newcommand{\R}{\mathbb{R}}
\newcommand{\Q}{\mathbb{Q}}
\newcommand{\Z}{\mathbb{Z}}

\newcommand{\term}[1]{\emph{#1}}

\newcommand{\Maass}{Maa\ss\ }

\newcommand{\Mat}[1]{M_{#1}}
\newcommand{\GGm}{\mathsf{G_m}}
\newcommand{\GL}[1]{\mathsf{GL}_{#1}}
\newcommand{\SL}[1]{\mathsf{SL}_{#1}}
\newcommand{\GU}[1]{\mathsf{GU}(#1,#1)}
\newcommand{\U}[1]{\mathsf{U}(#1,#1)}
\newcommand{\SU}[1]{\mathsf{SU}(#1,#1)}
\newcommand{\HerMat}[1]{\mathbb{S}_{#1}} \newcommand{\Res}{\text{Res}} 

\newcommand{\GS}{\mathbf{G}} \newcommand{\aDNu}{a_u} \newcommand{\aDNw}{a_w} 

\newcommand{\MF}{\mathfrak{M}}
\newcommand{\CF}{\mathfrak{S}}
\newcommand{\JF}{\mathfrak{J}}
\newcommand{\MFDp}{\MF_{k-1}^+(D, \chi)}
\newcommand{\MFDNp}{\MF_{k-1}^+(DN, \chi)}
\newcommand{\MFDN}{\MF_{k-1}(DN, \chi)}

\newcommand{\Matx}[1]{\begin{pmatrix} #1 \end{pmatrix}}

\newcommand{\ConjTran}[1]{#1^*} \newcommand{\ConjTranInv}[1]{\widehat{#1}} 

\newcommand{\epx}[1]{\mathsf{e}\left[#1\right]}
\newcommand{\sign}[1]{\mathsf{sign}\left[#1\right]}
\newcommand{\cc}[1]{\overline{#1}}

\newcommand{\OK}{\mathfrak{o}_K} \newcommand{\DK}{\mathfrak{d}_K} \newcommand{\Mabcd}{\Matx{a & b\\c & d}}
\newcommand{\Mxyzt}{\Matx{x & y\\z & t}}
\newcommand{\val}{\mathsf{val}}

\newcommand{\parens}[1]{\left( #1 \right)}
\newcommand{\braces}[1]{\left\{ #1 \right\}}
\newcommand{\suchthat}{\;\vline\;}
\newcommand{\tif}{\text{ if }}
\newcommand{\tand}{\text{ and }}
\newcommand{\twhere}{\text{ where }}
\newcommand{\totherwise}{\text{ otherwise }}
\newcommand{\tsince}{\text{ since }}
\newcommand{\teither}{\text{ either }}
\newcommand{\tforsome}{\text{ for some }}
\newcommand{\tor}{\text{ or }}

\newcommand{\Gm}{\Gamma}
\renewcommand{\a}{\alpha}
\renewcommand{\b}{\beta}
\newcommand{\gm}{\gamma}
\newcommand{\ld}{\lambda}
\newcommand{\eps}{\varepsilon}
\renewcommand{\theta}{\vartheta}
\renewcommand{\phi}{\varphi}

\newcommand{\dv}{\;|\;}
\newcommand{\ndv}{\;\nmid\;}
\newcommand{\smD}{\sqrt{-D}}
\newcommand{\sD}{\sqrt{D}}
\renewcommand{\gcd}{}
\newcommand{\jcb}[2]{(#1 \;|\; #2)}
\newcommand{\Tr}{\mathsf{Tr}}
\newcommand{\Nm}[1]{|#1|^2}  \newcommand{\Id}{\text{Id}}
\newcommand{\Gal}{\text{Gal}}

\renewcommand{\labelenumi}{(\roman{enumi})}

\begin{document}

\fi 
For each $m \dv D$, set $\psi_m := \prod_{p \dv m \text{ prime}} \chi_p$ as in section 5 of \cite{Krieg1991}.
For a character $\psi$ of modulus $M$, the (twisted) Gauss sum associated to $\psi$ is defined as
\begin{align*}
\GS(\psi; b) := \sum_{a \in \Z/M} \psi(a) \; \epx{b \frac{a}{M}}.
\end{align*}
and the standard Gauss sum $\GS(\psi) := \GS(\psi; 1)$. We remark that $\chi_K$ is of conductor $D$ and that if $\gcd(m, \frac{D}{m}) = 1$ then $\psi_m$ is of conductor $m$. It is well--known from one of Gauss' proofs of quadratic reciprocity (c.f. \cite{Miyake1989}, Lemma 4.8.1) that
\begin{align*}
\GS(\psi_m) := \eps(\psi_m) \sqrt{m}
\end{align*}
where
\begin{align*}
\eps(\psi_m) := \begin{cases}
1 &\tif \psi_m(-1) = 1,\\
i &\tif \psi_m(-1) = -1.
\end{cases}
\end{align*}

Fix natural numbers $m, n$ such that $D = mn$ and $\gcd(m,n) = 1$. We consider the Hecke operator (without normalization factor)
$$g|_{k-1} U_m := \sum_{j = 0}^{m-1} g|_{k-1} \Matx{1 & j \\ 0 & m}$$
and choose a matrix $P_m \in \SL{2}(\Z)$ such that
$$P_m \equiv \begin{cases}
J \bmod m^2\\
I \bmod (nN)^2
\end{cases}$$
where $J := J_1 = \Matx{0 & -1\\1 & 0}$ and $T := \Matx{1 & 1\\0 & 1} $ are the usual generators for $\SL{2}(\Z)$. Set
$$Q_m := P_m \Matx{m & 0 \\ 0 & 1}$$
and define the operator $V_m$ by
$$g|_{k-1} V_m := g|_{k-1} U_m |_{k-1} Q_m.$$

Similar to \cite{Krieg1991} Section 5, one has
\begin{lm}
$Q_m \Gm_0(ND) Q_m^{-1} = \Gm_0(ND)$ and so $g \mapsto g|_{k-1} Q_m$ is an involution of $\MFDN$.
\thlabel{lm:partial_fricke_involution}
\end{lm}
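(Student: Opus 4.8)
The plan is to recognize $Q_m$ as an Atkin--Lehner type element for the exact divisor $m$ of $ND$ and to run the standard conjugation computation; the consequence for the slash operator is then essentially formal, modulo one reciprocity bookkeeping point.

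\emph{The normalization.} Write $P_m = \Matx{a & b \\ c & d} \in \SL{2}(\Z)$. The congruences defining $P_m$ read $a \equiv d \equiv 0$, $b \equiv -1$, $c \equiv 1 \pmod{m^2}$ and $a \equiv d \equiv 1$, $b \equiv c \equiv 0 \pmod{(nN)^2}$, so $Q_m = \Matx{ma & b \\ mc & d}$ has $\det Q_m = m$ with $m^2 \mid d$ and $(nN)^2 \mid c$; since $m \mid D$, $\gcd(m, D/m) = 1$ and $\gcd(D,N) = 1$, the integer $m$ is an exact divisor of $ND$, and $Q_m$ has the shape $\Matx{m\alpha & \beta \\ ND\gamma & m\delta}$. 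For $\gamma = \Matx{A & B \\ NDC & E} \in \Gamma_0(ND)$ I would expand $Q_m \gamma Q_m^{-1} = \frac{1}{m}\, Q_m \gamma \Matx{d & -b \\ -mc & ma}$ entry by entry and check, first, that every entry is an integer — the denominator $m$ is absorbed because $m \mid ND$ (so $NDC/m \in \Z$) and because $m \mid d$ — and, second, that the lower-left entry is divisible by $ND = mnN$, which follows from $(nN)^2 \mid c$ together with $m^2 \mid d$ (each summand of that entry acquires a factor $mnN$). Since $\det(Q_m\gamma Q_m^{-1}) = \det\gamma = 1$, this gives $Q_m\Gamma_0(ND)Q_m^{-1} \subseteq \Gamma_0(ND)$. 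For the opposite inclusion I would note that $\frac{1}{m}Q_m^2 \in \Gamma_0(ND)$ — its lower-left entry is $c(am+d)$, divisible by $ND$ because $(nN)^2 \mid c$ and $m \mid am+d$ — and combine this with the inclusion just proved (alternatively, rerun the same computation for $Q_m^{-1}\gamma Q_m$). Either way, $Q_m\Gamma_0(ND)Q_m^{-1} = \Gamma_0(ND)$.

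\emph{The involution.} Because $Q_m$ normalizes $\Gamma_0(ND)$, for $g \in \MFDN$ and $\gamma \in \Gamma_0(ND)$ one has $(g|_{k-1}Q_m)|_{k-1}\gamma = \big(g|_{k-1}(Q_m\gamma Q_m^{-1})\big)|_{k-1}Q_m$, so $g|_{k-1}Q_m$ is again holomorphic and $\Gamma_0(ND)$-modular, with nebentypus $\gamma \mapsto \chi(E')$ where $E'$ is the lower-right entry of $Q_m\gamma Q_m^{-1}$. The congruences on $P_m$ give $E' \equiv E^{-1} \pmod m$ (using $AE \equiv 1 \bmod m$) and $E' \equiv E \pmod n$, and since $\chi = \chi_K = \psi_m\psi_n$ is quadratic, $\chi(E') = \psi_m(E)^{-1}\psi_n(E) = \chi(E)$; the behaviour at the cusps is unaffected, as $Q_m$ acts biholomorphically on $\UHP$ and permutes cusps. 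Hence $g \mapsto g|_{k-1}Q_m$ is a linear endomorphism of $\MFDN$, and its square is the slash action of $Q_m^2 \in m\,\Gamma_0(ND)$: since scalar matrices act trivially and $\frac{1}{m}Q_m^2 \in \Gamma_0(ND)$ acts on $\MFDN$ by the scalar $\chi$ evaluated at its lower-right entry, this square is a scalar operator. A short quadratic-reciprocity computation — equivalently, the Gauss sum identity $\GS(\psi_m) = \eps(\psi_m)\sqrt m$ recalled above — carried out exactly as in \cite{Krieg1991} Section~5, pins down this root of unity and shows $(g \mapsto g|_{k-1}Q_m)$ squares to the identity, i.e. is an involution.

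The only genuine work is the divisibility bookkeeping in the conjugation computation, and the one mild subtlety is keeping the quadratic-reciprocity signs straight when identifying the nebentypus of $g|_{k-1}Q_m$ and the scalar by which its square acts.
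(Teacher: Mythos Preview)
Your proof is correct and follows essentially the same approach as the paper: a direct conjugation check that $Q_m$ normalizes $\Gamma_0(ND)$, followed by the verification that the nebentypus $\chi = \psi_m\psi_n$ is preserved using that $\chi$ is quadratic and $E' \equiv E^{-1} \bmod m$, $E' \equiv E \bmod n$. The paper streamlines the conjugation step by writing $Q_m\gamma Q_m^{-1} = P_m\left(\begin{smallmatrix} x & my \\ Nnz & t \end{smallmatrix}\right)P_m^{-1}$ (manifestly integral) and reading off the reductions mod $m$ and mod $nN$ directly from $P_m \equiv J,\, I$, whereas you expand entry by entry; on the other hand you are more explicit than the paper about the reverse inclusion (via $\frac{1}{m}Q_m^2 \in \Gamma_0(ND)$) and about why the map squares to the identity, which the paper's own argument leaves to \cite{Krieg1991}.
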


\begin{pnote}
\begin{proof}
Suppose that $x, y, z, t \in \Z$ such that
$$\gm = \Matx{x & y \\ NDz & t} \in \Gm_0(ND)$$
and we need to check that $Q_m \gm Q_m^{-1} \in \Gm_0(ND)$. To do that, it suffices to check that
$$Q_m \gm Q_m^{-1} \equiv \Matx{* & * \\ 0 & *} \bmod M$$
for $M = m$ and $M = nN$ by Chinese Remainder Theorem. Let's do it:
\begin{align*}
Q_m \gm Q_m^{-1} &= P_m \Matx{m & 0 \\ 0 & 1} \Matx{x & y \\ NDz & t} \Matx{1/m & 0 \\ 0 & 1} P_m^{-1}\\
&= P_m \Matx{x & my \\ Nnz & t} P_m^{-1}\\
&\equiv \begin{cases}
J \Matx{x & my \\ Nnz & t} J^{-1} \bmod m\\
I \Matx{x & my \\ Nnz & t} I^{-1} \bmod nN\\
\end{cases} \text{ by definition of } P_m\\
&\equiv \begin{cases}
\Matx{0 & -1\\1 & 0} \Matx{x & my \\ Nnz & t} \Matx{0 & 1\\-1 & 0} \bmod m\\
\Matx{x & my \\ 0 & t} \bmod nN\\
\end{cases}\\
&\equiv \begin{cases}
\Matx{t & -Nnz \\-my & x} \bmod m\\
\Matx{x & my \\ 0 & t} \bmod nN\\
\end{cases}\\
&\equiv \Matx{* & * \\0 & *} \bmod m \tand \bmod nN
\end{align*}
To check the part about involution: for any $\gm \in \Gm_0(ND)$, one has $Q_m \gm Q_m^{-1} = \gm_1$ (equivalently $Q_m \gm = \gm_1 Q_m$) for some $\gm_1 \in \Gm_0(ND)$ and so
$$(g|_{k-1} Q_m)|_{k-1}\gm = g|_{k-1} Q_m \gm = g|_{k-1} \gm_1 Q_m = \chi(\gm_1) g|_{k-1} Q_m.$$

It remains to check that $\chi(\gm_1) = \chi(\gm)$. And this follows from the computation above. Note that it is not true for general character $\chi$ since the lower right entry of $\gm_1$ is not congruent to $t \mod D$: It is congruent to $t \mod Nn$ and to $x \mod m$. However, since our character $\chi$ is real i.e. take values $\pm 1$, $\chi(x) = \frac{1}{\chi(t)} = \chi(t)$ as $\frac{1}{\pm 1} = \pm 1$. Decompose $\chi = \psi_m \psi_n$ and we find that $\chi(\gm_1) = \psi_m(x) \psi_n(t) = \psi_m(t) \psi_n(t) = \chi(t) = \chi(\gm)$.
\end{proof}
\end{pnote}

\begin{pnote}
We record the following lemma that was implicitly used by Krieg multiple times.
\begin{lm}
Suppose that the matrix
$$\gm = \Mxyzt$$
satisfies $\gcd(x, z) = 1$ and $ND \dv z$. In other words, there exists a matrix $\Matx{x & \a \\ y & \b}$ in $\Gm_0(ND)$ whose first column is the same as that of $\gm$. Then
$$g|_{k-1} \gm = \chi(x)^{-1} g|_{k-1} \Matx{1 & \b y - \a t \\0 & \det \gm}$$
for any $g \in \MFDN$. If we further have $\gcd(x, \det \gm) = 1$ (which is true when we have $\det \gm \dv DN$ as in the many applications below) then
$$g|_{k-1} \gm = \chi(x)^{-1} g|_{k-1} \Matx{1 & y/x \bmod \det \gm\\0 & \det \gm}.$$
\thlabel{lm:action_of_completable_matrix}
\end{lm}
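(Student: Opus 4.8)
The plan is to write $\gm$ as an element of $\Gm_0(ND)$ times an upper-triangular integral matrix, so that everything reduces to the definition of the slash action together with the transformation law of $g \in \MFDN$ under $\Gm_0(ND)$ with nebentypus $\chi$.

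Since $\gcd(x,z) = 1$, I would choose $\a, \b \in \Z$ with $x\b - z\a = 1$ and set $\delta := \Matx{x & \a \\ z & \b}$; this lies in $\SL{2}(\Z)$, and because $ND \dv z$ it in fact lies in $\Gm_0(ND)$, which is exactly the ``in other words'' reformulation of the hypothesis. A one-line matrix multiplication, using $x\b - z\a = 1$, gives
$$\delta^{-1}\gm = \Matx{\b & -\a \\ -z & x}\Mxyzt = \Matx{1 & \b y - \a t \\ 0 & xt - yz} = \Matx{1 & \b y - \a t \\ 0 & \det\gm},$$
i.e.\ $\gm = \delta\,\Matx{1 & \b y - \a t \\ 0 & \det\gm}$. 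Applying $|_{k-1}$, which is a right action by the cocycle property of $j(\cdot\,;\cdot)$, and using $g|_{k-1}\delta = \chi(\b)\,g$ (the nebentypus read off the lower-right entry of $\delta$) yields
$$g|_{k-1}\gm = \chi(\b)\,g|_{k-1}\Matx{1 & \b y - \a t \\ 0 & \det\gm}.$$
Finally $x\b = 1 + z\a \equiv 1 \bmod ND$, so $\chi(\b) = \chi(x)^{-1}$ (equivalently $\chi(x)$, since $\chi$ is real-valued), which is the first asserted identity.

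For the refinement I would first observe that $\gcd(x,z)=1$ together with $ND \dv z$ forces $\gcd(x, ND) = 1$, so $\det\gm \dv DN$ already gives $\gcd(x, \det\gm) = 1$ and $y/x \bmod \det\gm$ makes sense. Granting this, I would compute
$$x(\b y - \a t) = (1 + z\a)y - x\a t = y + \a(zy - xt) = y - \a\det\gm \equiv y \bmod \det\gm,$$
so $\b y - \a t \equiv x^{-1}y \bmod \det\gm$. Since $g$ is invariant under $T = \Matx{1 & 1 \\ 0 & 1}$ (which has trivial nebentypus), the value of $g|_{k-1}\Matx{1 & b \\ 0 & \det\gm}$ depends only on $b$ modulo $\det\gm$; replacing $\b y - \a t$ by its residue $y/x \bmod \det\gm$ gives the second identity.

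There is no genuine obstacle here — only bookkeeping to get right: that $\delta$ really lands in $\Gm_0(ND)$ (this is where $ND \dv z$ enters), that the nebentypus is evaluated at the correct entry and that $\chi$ being real lets one move freely between $\chi(\b)$, $\chi(x)$ and $\chi(x)^{-1}$, and that the reduction $\b y - \a t \equiv y/x$ is taken modulo $\det\gm$, not modulo $ND$. These are the same manipulations Krieg carries out repeatedly but implicitly in Section 5 of \cite{Krieg1991}, which is the point of isolating the lemma.
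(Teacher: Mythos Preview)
Your proof is correct and follows essentially the same approach as the paper's own proof: factor $\gm$ as $\delta$ times an upper-triangular matrix with $\delta \in \Gm_0(ND)$, apply the nebentypus transformation law, convert $\chi(\b)$ to $\chi(x)^{-1}$ via $x\b \equiv 1 \bmod ND$, and for the refinement reduce the upper-right entry modulo $\det\gm$. The only cosmetic difference is that the paper justifies the last reduction via $e^{2\pi i \Z}=1$ in the Fourier expansion, whereas you phrase it as invariance under $T$; these are equivalent.
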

\begin{proof}
Using our assumption, let $\a, \b$ be such that
$$\Matx{x & \a \\ z & \b} \in \Gm_0(ND)$$
and then we have
\begin{align*}
\Matx{x & \a \\ z & \b}^{-1} \gm &= \Matx{\b & -\a \\ -z & x} \Mxyzt\\
&= \Matx{\b x - \a z & \b y - \a t \\0 & xt - zy}\\
&= \Matx{1 & \b y - \a t \\0 & d}
\end{align*}
where $d = \det \gm$.

It follows that
\begin{align*}
g|_{k-1} \gm &= g|_{k-1} \Matx{x & \a \\ z & \b} \Matx{1 & \b y - \a t \\0 & d}\\
&= \chi(\b) g|_{k-1} \Matx{1 & \b y - \a t \\0 & d} &\text{ by assumption } g \in \MFDN\\
&= \chi(x)^{-1} g|_{k-1} \Matx{1 & \b y - \a t \\0 & d} &\text{since } \b \equiv x^{-1} \bmod DN
\end{align*}

We get $\b = \frac{1 + z\a}{x}$ from the fact that $x \b - z \a = 1$ (note that we can't guarantee that $z \not= 0$ but we can guarantee that $x \not= 0$) whence
$$\b y - \a t = \frac{1 + z\a}{x} y - \a t = \frac{(1 + z\a)y - \a t x}{x} = \frac{y - \a (tx - yz)}{x} = \frac{y}{x} - \frac{\a}{x} d$$
is congruent to $\frac{y}{x} \mod d$ should $\gcd(x, d) = 1$. Since the action $g|_{k-1} \Matx{1 & r\\0 & h}$ only depends on $r \bmod h$ (as $e^{2\pi i \Z} = 1$), we get
$$g|_{k-1} \gm = \chi(x)^{-1} g|_{k-1} \Matx{1 & y/x \\0 & \det \gm}.$$
\end{proof}
\end{pnote}

\begin{prop}
If $k$ is even and $g \in \MFDN$ then we have
$$g|_{k-1} V_m = \GS(\psi_m) m^{1 - k} g$$
if and only if
$$\a_\ell(g) = 0 \text{ for all } \ell \text{ such that } \psi_m(-\ell) = -1.$$
\thlabel{prop:characterization_of_Vm}
\end{prop}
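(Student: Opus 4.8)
The plan is to compute the Fourier expansion of $g|_{k-1}V_m$ directly from that of $g = \sum_{\ell\geq 0}\a_\ell(g)q^\ell$, where $q := \epx{\tau}$, and then read off the equivalence by comparing coefficients. Since $|_{k-1}$ is a right action, $g|_{k-1}V_m = \sum_{b=0}^{m-1}g|_{k-1}M_b$ with $M_b := \Matx{1 & b\\0 & m}P_m\Matx{m&0\\0&1}$, a matrix of determinant $m^2$. Writing $P_m = \Matx{\a&\b\\\gm&\delta}$, the congruences $P_m\equiv J\bmod m^2$ and $P_m\equiv I\bmod (nN)^2$ give $\a\equiv 0,\ \gm\equiv 1,\ \b\equiv -1,\ \delta\equiv 0\bmod m^2$ and $\a\equiv 1,\ \gm\equiv 0\bmod (nN)^2$; in particular $ND$ divides $m^2\gm$, the lower-left entry of $M_b$, and the greatest common divisor of the first column of $m^{-1}M_b$ equals $\gcd(b,m)$.

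For the indices $b$ with $\gcd(b,m) = 1$ the first column of $m^{-1}M_b$ is a primitive integral vector whose lower entry is divisible by $ND$, so it completes to a matrix $\Gm_b\in\Gm_0(ND)$; writing $m^{-1}M_b = \Gm_b\,\Matx{1 & r_b\\0 & 1}$ and using $g|_{k-1}\Gm_b = \chi(\Gm_b)g$ reduces $g|_{k-1}M_b$ to $\chi(\Gm_b)\,g(\tau+r_b)$. The congruences of $P_m$ modulo $m$ and modulo $n$ then identify $\chi(\Gm_b) = \psi_m(b)$ and $\epx{\ell r_b} = \epx{-\ell b^{-1}/m}$, so that summing over these $b$ and substituting $c\equiv b^{-1}\bmod m$ produces the twisted Gauss sum $\GS(\psi_m;-\ell) = \psi_m(-\ell)\GS(\psi_m)$ as the coefficient of $\a_\ell(g)q^\ell$ — valid for every $\ell$ because $\psi_m$ is real and primitive of conductor $m$ (using $\gcd(m,D/m)=1$, and for $2\dv m$ the explicit shape of $\chi_2$). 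Thus the generic indices contribute $\GS(\psi_m)m^{1-k}\sum_\ell\psi_m(-\ell)\a_\ell(g)q^\ell$, the factor $m^{1-k}$ arising from the normalization of the slash.

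There remain the $b$ with $\gcd(b,m)>1$, notably $b = 0$, for which $M_0$ is (up to the scalar $m$) a Fricke-type matrix at $m$ and the column is no longer completable inside $\Gm_0(ND)$; here the contribution is not a plain twist. Note that the entire expression $g|_{k-1}V_m = (g|_{k-1}U_m)|_{k-1}Q_m$ depends on $g$ only through the coefficients $\a_{m\ell}(g)$, so the non-generic $b$ must, jointly with the generic ones, repackage the twist as an expression in the $\a_{m\ell}(g)$ alone. I would compute these terms either by reducing to $m = p$ prime — treating $b$ by the value of $\gcd(b,m)$ and using a multiplicativity of the $V$-operators for coprime divisors of $D$ — or by expanding $g$ at the relevant cusp by means of \thref{lm:partial_fricke_involution}. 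A somewhat delicate bookkeeping then shows that the eigenvalue equation $g|_{k-1}V_m = \GS(\psi_m)m^{1-k}g$ reduces, coefficient by coefficient, to $(\psi_m(-\ell)-1)\,\a_\ell(g) = 0$ for every $\ell$ prime to $m$, which is precisely the condition that $\a_\ell(g) = 0$ whenever $\psi_m(-\ell) = -1$. (The hypothesis $k$ even enters only to make $k-1$ the correct, odd, weight for $\chi_K$, so that $\chi_K(-1) = -1$.) A cleaner route is to use that $g\mapsto g|_{k-1}Q_m$ is an involution (\thref{lm:partial_fricke_involution}): the asserted identity is then equivalent to $g|_{k-1}U_m = \GS(\psi_m)m^{1-k}\,g|_{k-1}Q_m$, whose left-hand side is immediately $m^{2-k}\sum_\ell\a_{m\ell}(g)q^\ell$, reducing everything to the Fourier expansion of the single form $g|_{k-1}Q_m$.

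I expect the main obstacle to be exactly this bookkeeping for the non-generic $b$: the column-completion argument that disposes of the bulk of the sum breaks down there, so a genuinely separate step — the reduction to $m$ prime together with a direct analysis of the Fricke term, or the involution reformulation — is needed to see that these contributions assemble into the stated eigenvalue without introducing spurious Fourier coefficients. A secondary point requiring care is the case of even discriminant, where $m$ may be $4$ or $8$ (so not squarefree, $\psi_m$ built from $\chi_2$): one must confirm that $\psi_m$ is still primitive of conductor $m$ and that $\GS(\psi_m) = \eps(\psi_m)\sqrt m$, so that the Gauss-sum identity used above remains valid.
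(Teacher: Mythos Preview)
Your treatment of the generic indices $b$ with $\gcd(b,m)=1$ is correct and matches the paper's computation exactly: column-completion into $\Gm_0(ND)$, identification of the nebentypus value as $\psi_m(b)$, and the twisted Gauss sum producing $\GS(\psi_m)m^{1-k}\sum_\ell\psi_m(-\ell)\a_\ell(g)q^\ell$.

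The gap is in the non-generic terms, where you offer several possible routes but do not commit to one, and where your summary claim --- that the eigenvalue equation ``reduces, coefficient by coefficient, to $(\psi_m(-\ell)-1)\a_\ell(g)=0$ for every $\ell$ prime to $m$'' --- is only half the story. The paper does \emph{not} compute the Fourier expansion of the non-generic piece $f=\sum_{\gcd(b,m)>1}g|_{k-1}M_b$. Instead it observes that each such term is invariant under $\tau\mapsto\tau+\tfrac1p$ for any prime $p\mid\gcd(b,m)$ (checked by conjugating $\Matx{1&1/p\\0&1}$ through $M_b$ and landing back in $\Gm_0(ND)$), so $f$ has Fourier support only on $\ell$ with $\gcd(\ell,m)>1$. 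This immediately gives the forward direction by separating the $\gcd(\ell,m)=1$ and $\gcd(\ell,m)>1$ parts of the eigenvalue equation.

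For the converse, knowing $(\psi_m(-\ell)-1)\a_\ell(g)=0$ for $\ell$ prime to $m$ is not enough: you still have to show that the difference $h=g|_{k-1}V_m-\GS(\psi_m)m^{1-k}g$ vanishes, and all you know is that its Fourier coefficients vanish for $\gcd(\ell,m)=1$. The paper closes this by invoking Miyake, Theorem~4.6.8: since $h\in\MFDN$ (here \thref{lm:partial_fricke_involution} is used to see that $Q_m$ preserves the space), the conductor of $\chi$ is $D$, and $\gcd(m,N)=1$, a form in $\MFDN$ with vanishing $\ell$-th coefficient for all $\ell$ prime to $m$ is identically zero. Neither your ``delicate bookkeeping'' nor the involution reformulation $g|_{k-1}U_m=\GS(\psi_m)m^{1-k}g|_{k-1}Q_m$ avoids this step, because the Fourier expansion of $g|_{k-1}Q_m$ at infinity is not directly accessible from that of $g$.
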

\begin{proof}
The proof is verbatim as in \cite{Krieg1991} with appropriate modification to take care of the level.

\begin{pnote}
For completeness, we replicate the proof. Write $P_m = \Mabcd$. By definition, one has
\begin{align*}
g|_{k-1} V_m &= \sum_{j = 0}^{m - 1} g|_{k-1} \Matx{1 & j \\ 0 & m} \Mabcd \Matx{m & 0 \\ 0 & 1}\\
&= \sum_{j = 0}^{m - 1} g|_{k-1} \Matx{a + jc & b + jd \\ mc & md} \Matx{m & 0 \\ 0 & 1}
\end{align*}
Observe that $ND \dv mc$ since $c \equiv 0 \bmod nN$ by construction and that if $\gcd(j, m) = 1$ then $\gcd(a + jc, mc) = 1$ because $\gcd(a + jc, c) = \gcd(a, c) = 1$ and $\gcd(a + jc, m) = \gcd(j, m) = 1$ since
\begin{align}
a + jc &\equiv \begin{cases}
j \mod m\\
1 \mod n
\end{cases}
\end{align}
by definition of $P_m$. Thus, when $\gcd(m, j) = 1$, we can get a matrix in $\Gm_0(ND)$ of the form $\Matx{a + jc & *\\mc & *}$ and so by \thref{lm:action_of_completable_matrix}, one has
\begin{align*}
g|_{k-1} \Matx{a + jc & b + jd \\ mc & md} &= \chi(a + jc) g|_{k-1} \Matx{1 & -j\\0 & m}
\end{align*}
since $\chi = \chi^{-1}$ is a real character; $\det \Matx{a + jc & b + jd \\ mc & md} = m \dv ND$ and
$$\frac{b + jd}{a + jc} \equiv \frac{-1}{j} \bmod m$$
from $P_m \equiv J \mod m$. As noted in \cite{krieg1991},
\begin{align*}
\chi(a + jc) &= \psi_n(a + jc) \psi_m(a + jc)\\
&= \psi_n(1) \psi_m(j) &\text{ by } \eqref{eq:a_plus_jc_mod_mn}\\
&= \psi_m(j)
\end{align*}
Set
$$g_r(\tau) := \sum_{\ell = 0; \ell \equiv r \bmod m}^{\infty} \a_\ell(g) e^{2\pi i \ell \tau}.$$
We have
\begin{align*}
&\sum_{(j,m) = 1} g|_{k-1} \Matx{a + jc & b + jd \\ mc & md} \Matx{m & 0 \\ 0 & 1}\\
&= \sum_{(j,m) = 1} \psi_m(j) g|_{k-1} \Matx{1 & -j\\0 & m} \Matx{m & 0 \\ 0 & 1} &\text{by change of variable } j \text{ to } j^{-1}\\
&= \sum_{(j,m) = 1} \psi_m(j) g|_{k-1} \Matx{m & -j\\0 & m}\\
&= \sum_{(j,m) = 1} \psi_m(j) m^{1-k} g\parens{\frac{m\tau -j}{m}}\\
&= m^{1-k} \sum_{(j,m) = 1} \psi_m(j) g\parens{\tau - \frac{j}{m}}\\
&= m^{1-k} \sum_{(j,m) = 1} \psi_m(j) \sum_{\ell = 0}^{\infty} \a_\ell(g) \exp\parens{2\pi i \ell \parens{\tau - \frac{j}{m}}}\\
&= m^{1-k} \sum_{(j,m) = 1} \psi_m(j) \sum_{\ell = 0}^{\infty} \a_\ell(g) e^{2\pi i \ell \tau} \exp\parens{-2\pi i \ell \frac{j}{m}}\\
&= m^{1-k} \sum_{(j,m) = 1} \psi_m(j) \sum_{r = 0}^{m - 1} g_r(\tau) e^{-2\pi i r j / m}\\
&= m^{1-k} \sum_{r = 0}^{m - 1} \underbrace{\parens{\sum_{(j,m) = 1} \psi_m(j) e^{-2\pi i r j / m}}}_{\GS(\psi_m; -r)} g_r(\tau)\\
&= m^{1-k} \GS(\psi_m) \sum_{r = 0; (r, m) = 1}^{m - 1} \psi_m(-r) g_r(\tau)
\end{align*}
by basic property of Gauss sum.

Now we consider the remaining terms
$$f = \sum_{j=0; (j,m) > 1}^{m-1} g|_{k-1} \Matx{a + jc & b + jd \\ mc & md} \Matx{m & 0 \\ 0 & 1}$$
In \cite{krieg1991}, it was observed that
$$g|_{k-1} \Matx{a + jc & b + jd \\ mc & md} \Matx{m & 0 \\ 0 & 1}$$
is invariant under $\tau \mapsto \tau + \frac{1}{p}$ for any prime $p | \gcd(m, j)$. To see that, we need to show that
\begin{align*}
&g|_{k-1} \Matx{a + jc & b + jd \\ mc & md} \Matx{m & 0 \\ 0 & 1} \Matx{1 & 1/p\\0 & 1} = g|_{k-1} \Matx{a + jc & b + jd \\ mc & md} \Matx{m & 0 \\ 0 & 1}\\
\iff &g = g|_{k-1} \Matx{a + jc & b + jd \\ mc & md} \Matx{m & 0 \\ 0 & 1} \Matx{1 & 1/p\\0 & 1} \Matx{1/m & 0 \\ 0 & 1} \Matx{a + jc & b + jd \\ mc & md}^{-1}\\
\iff &g = g|_{k-1} \Matx{a + jc & b + jd \\ mc & md} \Matx{1 & m/p\\0 & 1} \Matx{d & -\frac{b + jd}{m} \\ -c & \frac{a + jc}{m}}\\
\iff &g = g|_{k-1} \Matx{a + jc & (a + jc)\frac{m}{p} + b + jd \\ mc & \frac{m^2c}{p} + md} \Matx{d & -\frac{b + jd}{m} \\ -c & \frac{a + jc}{m}}\\
\iff &g = g|_{k-1} \Matx{(a + jc) d - c \parens{(a + jc)\frac{m}{p} + b + jd} & (a + jc)\parens{-\frac{b + jd}{m}} + \parens{(a + jc)\frac{m}{p} + b + jd} \frac{a + jc}{m} \\mcd - c \parens{\frac{m^2c}{p} + md} & mc \parens{-\frac{b + jd}{m}} + \parens{\frac{m^2c}{p} + md} \frac{a + jc}{m}}\\
\iff &g = g|_{k-1} \Matx{* & \frac{(a + jc)^2}{p}\\\frac{m^2c^2}{p} & -c(b + jd) + \frac{mc(a + jc)}{p} + (a + jc)d}
\end{align*}
which is true because the matrix is in $\Gm_0(ND)$. (It is obviously integral\footnote{The only not-so-obvious one is $\frac{(a + jc)^2}{p} \in \Z$; equivalently $a^2 \equiv 0 \bmod p$ since $p | j$. But recall that $a \equiv 0 \bmod m$ and so is divisible by $p$ as well.} and of determinant 1. Then $m^2c^2/p$ is divisible by $mc^2$ for $m/p \in \Z$ and $mc^2$ is divisible by $mnN = ND$ by construction.)

Thus we can express
$$f = \sum_{\ell \geq 0; (\ell, m) > 1} \b(\ell) e^{2 \pi i \ell \tau}.$$
In other words, $f$ has no Fourier coefficients that are relative prime to $m$.

Now if $g|_{k-1} V_m = \GS(\psi_m) m^{1-k} g$ then we have
$$f + m^{1-k} \GS(\psi_m) \sum_{r = 0; (r, m) = 1}^{m - 1} \psi_m(-r) g_r(\tau) = \GS(\psi_m) m^{1-k} \sum_{r = 0}^{m - 1} g_r(\tau)$$
or equivalently
$$f - \GS(\psi_m) m^{1-k} \sum_{r = 0; (r,m) > 1}^{m - 1} g_r(\tau) = \GS(\psi_m) m^{1-k} \sum_{r = 0; (r,m) = 1}^{m - 1} (1 - \psi_m(-r)) g_r(\tau)$$
Observe that the left hand side has all $\ell$ Fourier coefficients zero for all $\gcd(\ell,m) = 1$ while non-zero Fourier coefficients of the right hand side are only for $\gcd(\ell, m) = 1$. Hence, both sides must be zero. Thus, if $\psi_m(-r) \not= 1$ i.e. $\psi_m(-r) = -1$ then we must have $g_r(\tau) = 0$ since $\GS(\psi_m) m^{1-k} \not= 0$. Hence, we found that $a_\ell(g) = 0$ for all $\ell$ such that $\psi_m(-\ell) = -1$.

Conversely, suppose that $a_\ell(g) = 0$ for all $\ell$ such that $\psi_m(-\ell) = -1$. Then
$$h = g|_{k-1} V_m - \GS(\psi_m) m^{1-k} g$$
has all $\ell$ Fourier coefficients vanish for $\gcd(\ell,m) = 1$. If we can show that $h \in \MFDN$ then we can conclude that $h = 0$ by Theorem 4.6.8 of \cite{Miyake1989}: the conductor of $\chi$ is $D$ and $\gcd\parens{m,\frac{ND}{D}} = \gcd(m, N) = 1$ so case (1) of the aforementioned theorem implies that $h = 0$. And that is because of our assumption $g \in \MFDN$ and by definition, $g|_{k-1} V_m = g|_{k-1} U_m |_{k-1} Q_m$ we already know that Hecke operator $U_m$ preserves $\MFDN$ and so is partial Fricke involution $Q_m$ as shown in lemma \ref{lm:partial_fricke_involution}.
\end{pnote}
\end{proof}

\begin{cor}
Let $k$ be even and $D = 2^e d$ with $2 \nmid d$. Then
$$\MFDNp = \braces{g \in \MFDN \suchthat \begin{array}{c}g|_{k-1} V_p = \GS(\psi_p) p^{1-k} g\\
\text{ for } p = 2^e \tand p | d \text{ odd prime}\end{array}}.$$
Furthhermore, if $g \in \MFDNp$ and $\gcd\parens{m, \frac{D}{m}} = 1$ then
$$g|_{k-1} V_m = \GS(\psi_m) m^{1-k} g.$$
\thlabel{cor:characterization_plus_forms}
\end{cor}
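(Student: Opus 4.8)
The plan is to deduce both assertions from \thref{prop:characterization_of_Vm}, the only extra ingredient being the combinatorial observation that the condition defining $\MFDNp$ splits into local conditions, one for each prime ramified in $K$. The product formula for $a_D$ shows that $a_D(-\ell) = 0$ holds precisely when some local factor $1 + \chi_p(-\ell)$ vanishes, i.e. when $\chi_p(-\ell) = -1$ for at least one prime $p \dv D$; and this is exactly the local condition ``$\psi_p(-\ell) = -1$'' that \thref{prop:characterization_of_Vm} attaches to the eigen-equation $g|_{k-1} V_p = \GS(\psi_p) p^{1-k} g$, since $\psi_p = \chi_p$ for odd $p$ and $\psi_{2^e} = \chi_2$. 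I would also record at the outset that, because $-D$ is a fundamental discriminant, the odd part $d$ of $D$ is squarefree and $2^e \in \{1,4,8\}$, so the prime powers ``$p = 2^e$ and $p \dv d$ odd'' are precisely the maximal prime-power divisors of $D$; each satisfies $\gcd\parens{p, \frac{D}{p}} = 1$, so $V_p$ is defined and \thref{prop:characterization_of_Vm} applies, and when $e = 0$ the equation for $p = 2^e = 1$ is the trivial identity $g = g|_{k-1} V_1$, which may be kept or dropped harmlessly.

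With this in hand the first equality is just a check of both inclusions. If $g \in \MFDNp$ and $p$ is a maximal prime-power divisor of $D$, then for every $\ell$ with $\chi_p(-\ell) = -1$ one factor of $a_D(-\ell)$ already vanishes, so $a_D(-\ell) = 0$ and hence $\a_\ell(g) = 0$; \thref{prop:characterization_of_Vm} then yields $g|_{k-1} V_p = \GS(\psi_p) p^{1-k} g$. Conversely, if $g$ satisfies all of these eigen-equations, then \thref{prop:characterization_of_Vm} forces $\a_\ell(g) = 0$ whenever $\chi_p(-\ell) = -1$ for some maximal prime-power divisor $p$, hence whenever $a_D(-\ell) = 0$, so $g \in \MFDNp$. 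For the ``furthermore'' clause I would repeat the argument with a general $m$: given $g \in \MFDNp$ and $\gcd\parens{m, \frac{D}{m}} = 1$, write $\psi_m = \prod_{p \dv m}\chi_p$; if $\psi_m(-\ell) = -1$ then $\chi_p(-\ell) = -1$ for some prime $p \dv m$ (hence $p \dv D$), so $a_D(-\ell) = 0$ and $\a_\ell(g) = 0$, and \thref{prop:characterization_of_Vm} converts this into $g|_{k-1} V_m = \GS(\psi_m) m^{1-k} g$.

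I do not expect a genuine obstacle: all of the analytic content --- the Gauss-sum identities and the appeal to Miyake's theorem forcing a form whose Fourier coefficients of index $\ell$ with $\gcd(\ell,m) = 1$ all vanish to be zero --- is already packaged inside \thref{prop:characterization_of_Vm}. The only points requiring a line of care are the bookkeeping at the prime $2$, i.e. identifying $\psi_{2^e}$ with the three-case character $\chi_2$ and noting that the $2^e = 1$ case is vacuous for odd $D$, and the elementary fact, used above, that a fundamental discriminant has squarefree odd part so that each of the prime powers in play is coprime to its cofactor in $D$.
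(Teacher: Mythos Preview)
Your proof is correct and follows essentially the same route as the paper's: both directions of the first equality are deduced from \thref{prop:characterization_of_Vm} via the product formula $a_D(\ell) = \prod_{p\mid D}(1+\chi_p(-\ell))$, exactly as you outline. For the ``furthermore'' clause the paper phrases the argument as an induction on the number of prime-power factors of $m$ (showing that if the eigen-equation holds for coprime $m,n$ then it holds for $mn$), whereas you go back directly to the definition of $\MFDNp$ and observe that $\psi_m(-\ell)=-1$ forces some $\chi_p(-\ell)=-1$; this is slightly more streamlined but amounts to the same computation, since the inductive step in the paper is precisely this observation applied twice together with \thref{prop:characterization_of_Vm}.
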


\begin{pnote}
\begin{proof}
By definition, $g \in \MFDNp$ if and only if $a_\ell(g) = 0$ for all $\ell$ such that $a_D(-\ell) = 0$. Recall that $a_D(-\ell) = 0$ if and only if $\chi_p(-\ell) = \psi_p(-\ell) = -1$ for some prime $p | D$. Thus, for any fix prime $p | D$ (or $p = 2^e || D$), we have $a_\ell(g) = 0$ for all $\ell$ such that $\psi_p(-\ell) = -1$ and so by our proposition, we have $g|_{k-1} V_p = \GS(\psi_p) p^{1-k} g$. This proves the inclusion $\subseteq$. The converse is by the other direction in our proposition.

The ``Furthermore'' statement follows from the proposition\footnote{Alternatively, I would imagine that one can follow Krieg's remark and use results of section 4.6 of \cite{miyake1989} to show that the operators $V_p$ and $V_{2^e}$ are mutually commutative on $\MFDN$ and somehow relate $V_{pq}$ and $V_p V_q$. Note that $\eps(\psi_p) \eps(\psi_q)$ might not be equal $\eps(\psi_{pq})$ so $V_p V_q = V_{pq}$ won't work all the time.}. First, notice that by induction, we only need to show that if $m, n | D$ are such that $m, n, \frac{D}{mn}$ are mutually coprime and that the statement is true for both $m$ and $n$ then it is also true for $mn$.

By assumption $(m,n) = 1$ so we have $\psi_{mn} = \psi_m \psi_n$ and so $\psi_{mn}(-\ell) = -1$ if and only if $\psi_m(-\ell) \not= \psi_n(-\ell)$. By assumption that the statement is true for $m$ and $n$, we know that $a_\ell(g) = 0$ if either $\psi_m(-\ell) = -1$ or $\psi_n(-\ell) = -1$. Hence,  obviously if $\psi_{mn}(-\ell) = -1$, we have $\psi_m(-\ell) \not= \psi_n(-\ell)$ and so it must be the case that either $\psi_m(-\ell) = -1$ or $\psi_n(-\ell) = -1$ whence $a_\ell(g) = 0$ as well. By the proposition, the statement is true for $mn$.
\end{proof}
\end{pnote}

Note that \thref{prop:characterization_of_Vm} and \thref{cor:characterization_plus_forms} are the generalization of Proposition and Corollary A (resp.) in Section 5 of \cite{Krieg1991}.

For any $j \in \Z/D\Z$, set $\sigma_j := \Matx{1 & j \\ 0 & D}$. Our goal is to compute $g|_{k-1} \sigma_j \sigma$ for $g \in \MFDNp$ and any $\sigma \in \Gm_0(N)$. Fix such a $\sigma = \Matx{x & y \\ z & t}$. For any $j \in \Z$, let $\mu = \mu_\sigma(j) := \gcd(x + zj, D)$ and $m = m_\sigma(j) := D_\mu$ be the $\mu$-component of $D$ i.e.
$$m = \prod_{p | \mu \text{ prime}} p^{\val_p(D)}$$
and let $n := \frac{D}{m}$. Then we have
\begin{lm}
\begin{enumerate}
\item $\gcd(x + zj, n) = 1$;
\item $\gcd(r(x + zj) - nz, m) = 1$ for any $r \in \Z$; in particular, $\gcd(m, z) = 1$;
\item $\gcd(x + zj, y + tj) = 1$;
\item $\gcd(y + tj, m) = 1$.
\end{enumerate}
\thlabel{lm:various_gcd}
\end{lm}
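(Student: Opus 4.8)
The plan is to reduce all four assertions to the single Bézout-type identity coming from $\det \sigma = 1$, together with the elementary structure of the coprime factorization $D = mn$. Throughout I would write $a := x + zj$ and $b := y + tj$, so that the matrix $\sigma_j\sigma$ whose $|_{k-1}$-action we ultimately want to analyze is $\Matx{a & b \\ Dz & Dt}$. The first step is to record
\[
at - bz = (x+zj)t - (y+tj)z = xt - yz = \det\sigma = 1 ,
\]
and to draw two consequences from it: any common divisor of $a$ and $b$ divides $at-bz = 1$, which is precisely (iii); and for every prime $p$ with $p \mid a$ one has $p \nmid z$ and $p \nmid b$, since otherwise $p$ would divide $at - bz = 1$.

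The second step is to recall how $m$ and $n$ sit inside $D$. By construction $m = D_\mu = \prod_{p \mid \mu} p^{\val_p(D)}$ with $\mu = \gcd(a, D)$, so $mn = D$ and $\gcd(m,n) = 1$; the primes dividing $m$ are exactly the primes dividing $\mu$ — in particular they all divide $a$ — while the primes dividing $n$ are the primes dividing $D$ that do not divide $\mu$, hence do not divide $a$.

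With these two facts in hand the statements drop out prime by prime. For (i), a prime $p \mid n$ does not divide $a$, so $\gcd(a, n) = 1$. For (iv), a prime $p \mid m$ divides $a$, hence does not divide $b$ by the first step, so $\gcd(b, m) = 1$. For (ii), fix $r \in \Z$ and a prime $p \mid m$: then $p \mid a$, hence $p \mid ra$, whereas $p \nmid n$ (by $\gcd(m,n) = 1$) and $p \nmid z$ (first step), so $p \nmid nz$ and therefore $p \nmid ra - nz$; running over all primes dividing $m$ gives $\gcd(r(x+zj) - nz, m) = 1$, and the case $r = 0$ yields the asserted $\gcd(m, z) = 1$.

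There is no serious obstacle here; the only points that need a little attention are computing $at - bz = 1$ correctly (it uses $\det\sigma = 1$, not $\det\sigma_j = D$) and keeping straight which primes are absorbed into $m$ versus $n$ by the $\mu$-component construction. I note in passing that neither $N \mid z$ nor $\gcd(D,N)=1$ is actually needed for this lemma: it is a purely formal consequence of $\sigma \in \SL{2}(\Z)$ and the definition of $m$ and $n$.
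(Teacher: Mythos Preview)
Your proof is correct and follows essentially the same approach as the paper's. Both arguments hinge on the observation that $m$ and $\mu$ share the same prime divisors (so every prime of $m$ divides $x+zj$) together with the determinant identity; you centralize everything around $at-bz=1$ where the paper instead invokes $\gcd(x,z)=1$ and the fact that $(x+zj,\,y+tj)$ is a row of an $\SL{2}(\Z)$ matrix, but these are the same underlying facts and the deductions are identical.
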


\begin{pnote}
Note that it could happen that $x + zj = 0$ but $m, \mu > 0$ since $\mu, m$ divides $D$.
\begin{proof}
It is obvious from the definition that $\gcd(m, n) = 1$ and that if $m$ and $\mu$ has the same set of prime divisors.
\begin{enumerate}
\item We have $\gcd(x + zj, n) \dv \gcd(x + zj, D) = \mu \dv m$ while on the other hand, we obviously have $\gcd(x + zj, n) | n$ and so $\gcd(x + zj, n) \dv \gcd(m, n) = 1$.

\item If we have a prime $p \dv \gcd(r(x + zj) - nz, m)$ then $p \dv m$ so $p \dv \mu$ by construction so $p \dv x + zj$. We also have $p \dv r(x + zj) - nz$ so that $p | nz$ which implies $p \dv z$ since $\gcd(m, n) = 1$. But then we get from $p \dv x + zj$ that $p \dv x$. So $p$ divides both $x, z$; contradicting the fact that $\gcd(x, z) = 1$ from the assumption $\sigma \in \Gm_0(N)$.

\item This is obvious since $x + zj$ and $y + jt$ are the first row of the matrix $\Matx{1 & j \\ 0 & 1}\Matx{x & y \\ z & t}$ in $\SL{2}(\Z)$.

\item Suppose that we have a prime divisor $p \dv \gcd(y + jt, m)$. Then as in (ii), $p \dv m$ so $p \dv \mu$; hence $p \dv x + zj$. But then $p \dv \gcd(x + zj, y + jt) = 1$ and we get a contradiction with (iii).
\end{enumerate}
\end{proof}
\end{pnote}

\begin{prop}
For any $g \in \MFDNp$, we have
$$g|_{k-1} \sigma_j \sigma = \sum_{s = 0}^{m-1} \frac{\GS(\psi_m; (x + zj)s - \lambda)}{m \; \psi_n(x + zj)} g|_{k-1} \Matx{1 & ns + \kappa \\ 0 & D}$$
where $\kappa = \kappa_\sigma(j) \in \Z$ is any integer such that
$$\kappa \equiv \frac{y + t j}{x + zj} \mod n$$
and
$$\lambda := \frac{y + jt - \kappa(x + zj)}{n} \in \Z.$$

In case $D$ is square-free, the formula can be simplifed to
$$g|_{k-1} \sigma_j \sigma = \frac{\GS(\psi_m; nz)}{m \; \psi_n(x + zj)} \sum_{s = 0}^{m-1} g|_{k-1} \Matx{1 & n s + \kappa \\ 0 & D}.$$
\thlabel{prop:transformation_plus_forms}
\end{prop}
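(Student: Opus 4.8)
The plan is to feed $\sigma_j\sigma$ through the $V_m$-identity for plus forms and then reduce the resulting matrices one $\Gamma_0(DN)$-coset at a time. Put $a := x+zj$ and $b := y+tj$; since $\det(\sigma_j\sigma) = D$ we have $at - bz = 1$, and substituting $b = \kappa a + n\lambda$ this rearranges to the identity $a(t-\kappa z) = 1 + n\lambda z$, which will be used repeatedly. Because $g \in \MFDNp$ and $\gcd(m, n) = 1$ by construction of $m = m_\sigma(j)$, \thref{cor:characterization_plus_forms} gives $g|_{k-1}V_m = \GS(\psi_m)\,m^{1-k}g$, so $g = \tfrac{m^{k-1}}{\GS(\psi_m)}\sum_{i=0}^{m-1} g|_{k-1}\Matx{1 & i\\ 0 & m}Q_m$. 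Applying $|_{k-1}\sigma_j\sigma$ and using $Q_m\sigma_j\sigma = P_m\Matx{ma & mb\\ Dz & Dt} = \Matx{m & 0\\ 0 & m}P_m\Matx{a & b\\ nz & nt}$, the scalar factor $\Matx{m & 0\\ 0 & m}$ pulls out an $m^{1-k}$ that cancels the prefactor, leaving
\[
g|_{k-1}\sigma_j\sigma = \frac{1}{\GS(\psi_m)}\sum_{i=0}^{m-1} g|_{k-1}W_i, \qquad W_i := \Matx{1 & i\\ 0 & m}P_m\Matx{a & b\\ nz & nt},
\]
each $W_i$ having determinant $D$.

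The next step is to reduce each $W_i$ to standard form $\Matx{1 & \ast\\ 0 & D}$. Using $P_m \equiv \Matx{0 & -1\\ 1 & 0}\pmod{m^2}$ and $P_m \equiv I_2\pmod{(nN)^2}$, a short entrywise computation gives $W_i = \Matx{A_i & B_i\\ C_i & \ast}$ with $A_i \equiv ia - nz\pmod m$, $A_i \equiv a\pmod{nN}$, $B_i \equiv ib - nt\pmod m$, $B_i \equiv b\pmod n$, and $DN \mid C_i$ (here one uses $N \mid z$). Now \thref{lm:various_gcd}(ii) gives $\gcd(ia - nz, m) = 1$, while $\gcd(a, nN) = 1$ because $\gcd(a, z) = 1$ (so also $\gcd(a,N)=1$ since $N\mid z$) and $\gcd(a, n) = 1$; hence $\gcd(A_i, DN) = 1$, and in particular $\gcd(A_i, C_i) \mid \gcd(A_i, D) = 1$. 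Since $g$ is $\chi$-modular on $\Gamma_0(DN)$, completing the first column $(A_i, C_i)$ to an element of $\Gamma_0(DN)$ and clearing it yields $g|_{k-1}W_i = \chi(A_i)^{-1}\,g|_{k-1}\Matx{1 & B_iA_i^{-1}\bmod D\\ 0 & D}$. Splitting $\chi = \psi_m\psi_n$ through the conductor $D = mn$ and inserting the congruences above gives $\chi(A_i)^{-1} = \psi_m(ia - nz)\,\psi_n(a)$ and $B_iA_i^{-1} \equiv \kappa\pmod n$, so each reduced upper-right entry is one of the $m$ residues $ns+\kappa\pmod D$, $0 \le s \le m-1$.

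Finally I would collect the $m$ terms according to which $s$ satisfies $B_iA_i^{-1} \equiv ns+\kappa\pmod D$, equivalently — both sides being $\equiv\kappa\pmod n$ — modulo $m$. Written out this reads $ib - nt \equiv (ns+\kappa)(ia - nz)\pmod m$; multiplying by $a$, using $a(t-\kappa z) = 1 + n\lambda z$ and $\gcd(n,m)=1$, it is seen to imply $(ia - nz)(\lambda - as) \equiv 1\pmod m$ (an equivalence when $\gcd(a,m)=1$). The coefficient of $g|_{k-1}\Matx{1 & ns+\kappa\\ 0 & D}$ is then $\tfrac{\psi_n(a)}{\GS(\psi_m)}\sum_i \psi_m(ia - nz)$ over the relevant $i$, and a character-sum computation — using $\GS(\psi_m; r) = \psi_m(r)\GS(\psi_m)$ for $\gcd(r,m) = 1$ (and $\GS(\psi_m;r)=0$ otherwise, by primitivity of $\psi_m$ modulo $m$) together with $\GS(\psi_m)^2 = \psi_m(-1)m$ — identifies it with $\GS(\psi_m; as-\lambda)/(m\,\psi_n(a))$, as claimed. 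When $D$ is square-free this step is transparent: then $m = \mu \mid a$, so $\psi_m(ia - nz) = \psi_m(-nz)$ is independent of $i$, and $i \mapsto B_iA_i^{-1}\bmod m$ is a bijection of $\Z/m\Z$ (because $\gcd(b, m) = 1$ by \thref{lm:various_gcd}(iv)), which produces the stated uniform coefficient $\GS(\psi_m; nz)/(m\,\psi_n(a))$ immediately.

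I expect the real work to be this last bookkeeping for non-square-free $D$. There $\gcd(a, m) = \mu > 1$, so $i \mapsto ia - nz$ is $\mu$-to-one and multiplying the congruence by $a$ only gives an implication; one must track the multiplicities, confirm that the reduced entries genuinely run over $\{ns+\kappa : 0 \le s < m\}$, and check that the terms with $\gcd(\lambda - as, m) > 1$ contribute nothing — this is exactly where the full strength of \thref{lm:various_gcd}, rather than a naive inversion of $a$ modulo $m$, is required.
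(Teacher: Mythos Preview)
Your argument is correct and follows the paper's proof essentially line for line: apply the $V_m$-identity from \thref{cor:characterization_plus_forms}, reduce each of the $m$ resulting matrices to the form $\Matx{1 & *\\0 & D}$ via a $\Gamma_0(DN)$-coset (using the congruences defining $P_m$ together with \thref{lm:various_gcd}), and then change variables from $i$ to $s$. The paper carries out exactly this computation, with your $i$ playing the role of its summation index $r$.

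The worries in your final paragraph are unfounded, and this is the only place you diverge from the paper. The map $i \mapsto F_i \bmod m$, i.e.\ $i \mapsto (ib-nt)(ia-nz)^{-1}$, is a \emph{bijection} of $\Z/m\Z$ for every $D$: it is the M\"obius transformation attached to $\Matx{b & -nt\\ a & -nz}$, which has determinant $n(at-bz)=n$, a unit modulo $m$, and whose denominator $ia-nz$ is always a unit by \thref{lm:various_gcd}(ii). (The paper simply asserts this bijection without proof.) Hence each $s\in\Z/m\Z$ arises from exactly one $i$; your derived relation $(ia-nz)(\lambda-as)\equiv 1\pmod m$ then forces $\gcd(\lambda-as,m)=1$ automatically and gives $\psi_m(ia-nz)=\psi_m(\lambda-as)$, so the ``character-sum'' over the relevant $i$ collapses to a single term and the coefficient computation goes through uniformly, with no multiplicities to track and no vanishing terms to discard.
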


\begin{pnote}
Note that if $x + zj = 0$ then $\mu = \gcd(0, D) = D$ so $m = \mu = D$ and $n = 1$. In such case, one can take $\kappa$ can be any integer and $ns + \kappa$ goes through all residue modulo $D$ as $s$ runs from 0 to $m - 1$. Also $\psi_1(0) = 1$. \end{pnote}

\begin{proof}
Let $P_m = \Mabcd$. We have
\begin{align*}
g|_{k-1} \sigma_j \sigma &= \frac{m^{k - 1}}{\GS(\psi_m)} g |_{k-1} V_m \; |_{k-1} \Matx{x + zj & y + jt \\ Dz & Dt} \text{ by \thref{cor:characterization_plus_forms}}\\
&= \frac{m^{k - 1}}{\GS(\psi_m)} \sum_{r = 0}^{m-1} g |_{k-1} \Matx{a + rc & b + rd \\ mc & md} \Matx{m & 0 \\ 0 & 1} \Matx{x + zj & y + jt \\ Dz & Dt}\\
&= \frac{1}{\GS(\psi_m)} \sum_{r = 0}^{m-1} g |_{k-1} \Matx{(a + rc)(x + zj) + (b + rd) nz & (a + rc)(y + jt) + (b + rd)nt \\ mc(x + zj) + dDz & mc(y + jt) + (b + rd) mdnt}
\end{align*}
Let us temporarily denote the above matrix by $\gm = \gm_r$ and denote by $A, B$ the entries of the first column of $\gm$ i.e.
\begin{align*}
A &= (a + rc)(x + zj) + (b + rd) nz\\
B &= mc(x + zj) + dDz
\end{align*}
Observe that $\det \gm = D$ and that the lower row of $\gm$ is divisible by $D$. Thus, the matrix obtained by dividing both entries on the lower row of $\gm$ by $D$, namely the matrix
$$\Matx{A & (a + rc)(y + jt) + (b + rd)nt \\ \frac{B}{D} & \frac{mc(y + jt) + (b + rd) mdnt}{D}}$$
is in $\SL{2}(\Z)$ whence $\gcd\parens{A, \frac{B}{D}} = 1$ and so $\gcd(A, B) = \gcd\parens{A, m n \frac{B}{D}} = \gcd(A, m) \gcd(A, n) = 1$ by \thref{lm:various_gcd} for $A \equiv \begin{cases}
r(x + zj) - nz \mod m\\
x + zj \mod n
\end{cases}$. Since $\gcd(A, B) = 1$ and $ND \dv B$, there exists a matrix $\Matx{A & *\\B & *} \in \Gm_0(ND)$ and so we have
$$g|_{k-1} \gm = \frac{\psi_n(x + zj)}{\psi_m(r(x + zj) - nz)} g|_{k-1} \Matx{1 & \frac{(a + rc)(y + jt) + (b + rd)nt}{(a + rc)(x + zj) + (b + rd) nz} \bmod D \\ 0 & D}$$
\begin{pnote}
\noindent by \thref{lm:action_of_completable_matrix}. The determinant of the matrix is clearly $D$.
\end{pnote}
Let us denote the upper right entry
$$F_r = \frac{(a + rc)(y + jt) + (b + rd)nt}{(a + rc)(x + zj) + (b + rd) nz} \bmod D \equiv \begin{cases}
\displaystyle \frac{r (y + jt) - nt}{r(x + zj) - nz} \mod m\\
\displaystyle \frac{y + jt}{x + zj} \mod n
\end{cases}$$
so that we get
\begin{align*}
g|_{k-1} \sigma_j \sigma
&= \sum_{r = 0}^{m-1} \frac{\psi_n(x + zj)}{\GS(\psi_m; r(x + zj) - nz)} g|_{k-1} \Matx{1 & F_r \\ 0 & D}
\end{align*}
As $r$ runs through the residues mod $m$, the $F_r$ also goes through the subset of residues
$$\braces{n s + \kappa \suchthat 0 \leq s \leq m - 1} \subseteq \Z/D\Z.$$
A simple change-of-variable from $r$ to $s$ and use observation that $\frac{1}{\GS(\psi_m; b)} = \frac{\GS(\psi_m; -b)}{m}$ to move the Gauss sum to the numerator, we derive the formula claimed in the proposition.

With the assumption that $D$ is square-free, we always have $\mu = m$ so $m \dv x + zj$ and the factor $\GS(\psi_m; r(x + zj) - nz) = \GS(\psi_m; -nz)$ is independent of $s$.
\end{proof}

\section{Arithmetic Criterion}
\label{sec:arithmetic_condition}

\let\ifAllProof\iffalse\let\ifFullProof\iftrue\let\ifPersonalNote\iffalse

\unless\ifdefined\IsMainDocument

\documentclass[12pt]{amsart}
\usepackage{amsmath,amssymb,iftex,cancel,xy,xcolor,verbatim}
\usepackage{theoremref}
\usepackage[normalem]{ulem}
\usepackage[margin=1in]{geometry}

\xyoption{all}

\newtheorem{thm}{Theorem}[section]
\newtheorem{lm}[thm]{Lemma}
\newtheorem{prop}[thm]{Proposition}
\newtheorem{cor}[thm]{Corollary}
\theoremstyle{definition}
\newtheorem{defn}[thm]{Definition}
\newtheorem{remark}[thm]{Remark}
\newtheorem{example}[thm]{Example}

\numberwithin{equation}{section}

\title{Hermitian Maass lift for General Level}
\date{}
\author{Lawrence (An Hoa) Vu}

\ifPersonalNote
	\ifPDFTeX
		\usepackage{mdframed}
		\newmdtheoremenv[
		    linewidth=2pt,
		    leftmargin=0pt,
		    innerleftmargin=0.4em,
		    rightmargin=0pt,
		    innerrightmargin=0.4em,
		    innertopmargin=-5pt,
		    innerbottommargin=3pt,
		    splittopskip=\topskip,
		    splitbottomskip=0.3\topskip,
		    skipabove=0.6\topsep
		]{pnote}{Personal note}
	\else
		\definecolor{OliveGreen}{rgb}{0,0.6,0}
		\newcounter{personaln}
		\newenvironment{pnote}{\color{OliveGreen}
		\stepcounter{personaln}
		\par\bigskip\noindent{\bfseries Personal note \arabic{personaln}:}
		\par\medskip}{\par\medskip}
	\fi
\else	
	\let\pnote\comment
	\let\endpnote\endcomment
\fi

\allowdisplaybreaks

\ifPDFTeX
\else
	\usepackage{yfonts}
	\renewcommand{\mathfrak}{\textfrak}
\fi

\newcommand{\LHS}{\text{LHS}}

\newcommand{\UHP}{\mathfrak{H}}
\newcommand{\C}{\mathbb{C}}
\newcommand{\R}{\mathbb{R}}
\newcommand{\Q}{\mathbb{Q}}
\newcommand{\Z}{\mathbb{Z}}

\newcommand{\term}[1]{\emph{#1}}

\newcommand{\Maass}{Maa\ss\ }

\newcommand{\Mat}[1]{M_{#1}}
\newcommand{\GGm}{\mathsf{G_m}}
\newcommand{\GL}[1]{\mathsf{GL}_{#1}}
\newcommand{\SL}[1]{\mathsf{SL}_{#1}}
\newcommand{\GU}[1]{\mathsf{GU}(#1,#1)}
\newcommand{\U}[1]{\mathsf{U}(#1,#1)}
\newcommand{\SU}[1]{\mathsf{SU}(#1,#1)}
\newcommand{\HerMat}[1]{\mathbb{S}_{#1}} \newcommand{\Res}{\text{Res}} 

\newcommand{\GS}{\mathbf{G}} \newcommand{\aDNu}{a_u} \newcommand{\aDNw}{a_w} 

\newcommand{\MF}{\mathfrak{M}}
\newcommand{\CF}{\mathfrak{S}}
\newcommand{\JF}{\mathfrak{J}}
\newcommand{\MFDp}{\MF_{k-1}^+(D, \chi)}
\newcommand{\MFDNp}{\MF_{k-1}^+(DN, \chi)}
\newcommand{\MFDN}{\MF_{k-1}(DN, \chi)}

\newcommand{\Matx}[1]{\begin{pmatrix} #1 \end{pmatrix}}

\newcommand{\ConjTran}[1]{#1^*} \newcommand{\ConjTranInv}[1]{\widehat{#1}} 

\newcommand{\epx}[1]{\mathsf{e}\left[#1\right]}
\newcommand{\sign}[1]{\mathsf{sign}\left[#1\right]}
\newcommand{\cc}[1]{\overline{#1}}

\newcommand{\OK}{\mathfrak{o}_K} \newcommand{\DK}{\mathfrak{d}_K} \newcommand{\Mabcd}{\Matx{a & b\\c & d}}
\newcommand{\Mxyzt}{\Matx{x & y\\z & t}}
\newcommand{\val}{\mathsf{val}}

\newcommand{\parens}[1]{\left( #1 \right)}
\newcommand{\braces}[1]{\left\{ #1 \right\}}
\newcommand{\suchthat}{\;\vline\;}
\newcommand{\tif}{\text{ if }}
\newcommand{\tand}{\text{ and }}
\newcommand{\twhere}{\text{ where }}
\newcommand{\totherwise}{\text{ otherwise }}
\newcommand{\tsince}{\text{ since }}
\newcommand{\teither}{\text{ either }}
\newcommand{\tforsome}{\text{ for some }}
\newcommand{\tor}{\text{ or }}

\newcommand{\Gm}{\Gamma}
\renewcommand{\a}{\alpha}
\renewcommand{\b}{\beta}
\newcommand{\gm}{\gamma}
\newcommand{\ld}{\lambda}
\newcommand{\eps}{\varepsilon}
\renewcommand{\theta}{\vartheta}
\renewcommand{\phi}{\varphi}

\newcommand{\dv}{\;|\;}
\newcommand{\ndv}{\;\nmid\;}
\newcommand{\smD}{\sqrt{-D}}
\newcommand{\sD}{\sqrt{D}}
\renewcommand{\gcd}{}
\newcommand{\jcb}[2]{(#1 \;|\; #2)}
\newcommand{\Tr}{\mathsf{Tr}}
\newcommand{\Nm}[1]{|#1|^2}  \newcommand{\Id}{\text{Id}}
\newcommand{\Gal}{\text{Gal}}

\renewcommand{\labelenumi}{(\roman{enumi})}

\begin{document}

\fi 
To simplify a lot of subsequent formulas, we shall also denote $a_u := a_D(-D \Nm{u})$ for $u \in [\DK]$. Given a plus form
$$g = \sum_{\ell = 0}^{\infty} a_\ell(g) \epx{\ell \tau} \in \MFDNp,$$
we define
\begin{align*}
g_u &:= \frac{-i \sD}{a_u} \sum_{\underset{\ell \equiv -D \Nm{u} \bmod D}{\ell = 0}}^{\infty} a_\ell(g) \epx{\frac{\ell \tau}{D}}\qquad \text{ for all } u \in [\DK]
\end{align*}
and
$$\phi_g := \sum g_u \theta_u.$$
If $\phi_g$ is a Jacobi form of level $N$ then it will be the lifting of $\chi(N) g = \pm g$, by the argument at the end of section \ref{sec:hermitian_modular_forms}.

In this section, we shall derive transformation formula for $g_u$ using result of the previous section and then deduce an \emph{arithmetic condition} for $\phi_g$ to be a Hermitian Jacobi form (\thref{prop:explicit_arithmetic_criterion_jacobi_form}).

\begin{prop}
Let $\sigma = \Mabcd \in \Gm_0(N)$. For each $j \in \Z$, we choose $\kappa = \kappa_\sigma(j) \in \Z$ such that
$$\kappa \equiv \frac{b + dj}{a + cj} \bmod n$$
as before; in addition that if $m \not= \mu$ (i.e. $m = 2\mu$ or $m = 4\mu$), then
$$\kappa \equiv \frac{(b + dj + c)/2^f}{(a+cj)/2^f} \bmod \frac{m}{\mu}$$
where $f = \val_2(a + cj) = \val_2(\mu)$ is the maximum power of two dividing $\mu$. Then
$$g_u|_{k-1} \sigma = \frac{1}{D\aDNu} \sum_{v \in [\DK]} \sum_{\underset{\gcd(D\Nm{v}, m) = \mu}{j = 0}}^{D - 1} R_\sigma(v, j) \; \GS(\psi_m; nc) \; \psi_n(a + cj) \; \epx{\Nm{u} j - \Nm{v} \kappa} g_v$$
where
\begin{align*}
R_\sigma(v, j) &= \begin{cases}
\frac{1}{2} \parens{1 + \epx{-\frac{(a + cj) D\Nm{v}}{2m}} \chi_2(5 - 2nc)} &\tif m = 4\mu,\\
1 &\tif m \not= 4\mu.
\end{cases}
\end{align*}
\thlabel{prop:transformation_g_u}
\end{prop}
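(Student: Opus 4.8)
The plan is to realize $g_u$ as a finite Fourier transform over $\Z/D\Z$ of the translates $g|_{k-1}\sigma_j$ (with $\sigma_j=\Matx{1 & j\\0 & D}$ as in \thref{prop:transformation_plus_forms}), to push $\sigma$ through these using that proposition, to re-expand the result back into the $g_v$, and thereby to reduce the whole statement to one Gauss-sum identity. For the first step, note that $u=\tfrac{i}{\sD}w$ with $w\in\OK$ makes $D\Nm{u}=\Nm{w}$ an integer, so a routine computation with finite geometric sums (using $g|_{k-1}\sigma_j(\tau)=D^{-(k-1)}g\bigl(\tfrac{\tau+j}{D}\bigr)$) gives the inversion pair
\[
g_u=\frac{-i\sD}{\aDNu}\,D^{k-2}\sum_{j=0}^{D-1}\epx{\Nm{u}j}\;g|_{k-1}\sigma_j
\qquad\text{and}\qquad
g|_{k-1}\sigma_{j}=\frac{D^{-(k-1)}}{-i\sD}\sum_{v\in[\DK]}\epx{-\Nm{v}j}\;g_v,
\]
the second identity using that $g\in\MFDNp$ is a plus form, so that its Fourier support lies along the residues $-D\Nm{v}\bmod D$ and, by the theta decomposition \eqref{eq:theta_decomposition}, the $D$-dissection of $g$ is organized by the classes $v\in[\DK]$, each norm-residue being carried (with multiplicity $\aDNu$) by all $v$ with that residue.

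Slashing the first identity by $\sigma$ gives $g_u|_{k-1}\sigma=\tfrac{-i\sD}{\aDNu}D^{k-2}\sum_j\epx{\Nm{u}j}\,g|_{k-1}\sigma_j\sigma$. Into each $g|_{k-1}\sigma_j\sigma$ I substitute \thref{prop:transformation_plus_forms} (its mod-$n$ requirement on $\kappa$ being part of the stronger congruence imposed here), rewriting it as $\sum_{s=0}^{m-1}\tfrac{\GS(\psi_m;(a+cj)s-\lambda)}{m\,\psi_n(a+cj)}\,g|_{k-1}\sigma_{ns+\kappa}$, and then re-expand each $g|_{k-1}\sigma_{ns+\kappa}$ into the $g_v$ by the second identity above. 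The factors $\pm i\sD$ and the powers of $D$ collapse to $\tfrac{1}{D\aDNu}$, the phase $\epx{-\Nm{v}\kappa}$ factors out of the $s$-sum, $\psi_n^{-1}=\psi_n$, and one arrives at
\[
g_u|_{k-1}\sigma=\frac{1}{D\aDNu}\sum_{v\in[\DK]}\sum_{j=0}^{D-1}\epx{\Nm{u}j-\Nm{v}\kappa}\,\psi_n(a+cj)\,T_{v,j}\,g_v,\qquad
T_{v,j}:=\frac{1}{m}\sum_{s=0}^{m-1}\GS\bigl(\psi_m;(a+cj)s-\lambda\bigr)\,\epx{-\tfrac{D\Nm{v}}{m}\,s}.
\]
The proposition is therefore equivalent to the claim that $T_{v,j}=R_\sigma(v,j)\,\GS(\psi_m;nc)$ when $\gcd(D\Nm{v},m)=\mu$ and $T_{v,j}=0$ otherwise; reindexing the double sum then produces the stated formula.

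To evaluate $T_{v,j}$ I would use that $\gcd(m,D/m)=1$, so $\psi_m$ is a primitive real character mod $m$ and $\GS(\psi_m;x)=\psi_m(x)\GS(\psi_m)$ for every $x\in\Z$ (the value being $0$ when $\gcd(x,m)>1$); hence $T_{v,j}=\tfrac{\GS(\psi_m)}{m}\sum_{s}\psi_m\bigl((a+cj)s-\lambda\bigr)\epx{-\tfrac{D\Nm{v}}{m}s}$, which I factor over the prime powers of $m$ by the Chinese Remainder Theorem. For an odd $p\mid m$ one has $\val_p(m)=1$ ($D$ being a fundamental discriminant) and $p\mid a+cj$ (as $p\mid\mu=\gcd(a+cj,D)$), so the $\chi_p$-component of $\psi_m((a+cj)s-\lambda)$ is the $s$-free value $\chi_p(-\lambda)$; summing over the $p$-coordinate of $s$ then annihilates the $v$ with $p\nmid D\Nm{v}$ and contributes a factor $p$ otherwise, while $ad-bc=1$ together with $p\nmid c$ and $p\mid a+cj$ forces $(nc)(-\lambda)\equiv 1\bmod p$, so $\chi_p(-\lambda)=\chi_p(nc)$; thus the odd primes contribute to $T_{v,j}$ exactly the odd part of $\GS(\psi_m;nc)$, together with $R_\sigma=1$ from them and the expected divisibility restriction on $v$. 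At $p=2$: if $\val_2(\mu)=\val_2(m)$ (i.e. $2\nmid\mu$, or $2\mid\mu$ with $\val_2(a+cj)\ge\val_2(D)$) the $\chi_2$-factor is again $s$-free and the analysis is unchanged; but if $\val_2(a+cj)<\val_2(D)$ — precisely the cases $m=2\mu$ and $m=4\mu$ — the $\chi_2$-factor genuinely depends on $s$ and one is left with a quadratic Gauss sum in the $2$-adic coordinate of $s$. Here the extra congruence imposed on $\kappa$ modulo $m/\mu$ is exactly what normalizes $\lambda$ so that, after completing the square modulo $8$, the $2$-part of $T_{v,j}$ comes out as $R_\sigma(v,j)$ times the $2$-part of $\GS(\psi_m;nc)$ — the value $\tfrac{1}{2}\bigl(1+\epx{-\tfrac{(a+cj)D\Nm{v}}{2m}}\chi_2(5-2nc)\bigr)$ when $m=4\mu$ being the relevant averaged Gauss-sum evaluation — while the $v$ with $\gcd(D\Nm{v},m)\ne\mu$ still drop out. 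Multiplying the prime-power contributions gives the claimed value of $T_{v,j}$.

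The main obstacle is this $2$-adic analysis. Away from $p=2$ the argument is routine bookkeeping with finite Fourier transforms and classical quadratic Gauss sums; but pinning down the correction factor $R_\sigma(v,j)$ — the constant $\tfrac{1}{2}$, the character value $\chi_2(5-2nc)$, and the phase $\epx{-\tfrac{(a+cj)D\Nm{v}}{2m}}$ — and checking that the prescribed congruence on $\kappa$ modulo $m/\mu$ is exactly the normalization making the $2$-part of $T_{v,j}$ collapse to that factor times the $2$-part of $\GS(\psi_m;nc)$ is the delicate heart of the proof.
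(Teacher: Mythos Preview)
Your proposal is correct and follows essentially the same route as the paper: the paper records the inversion pair as three observations (i) $g_u=\tfrac{-iD^{k-3/2}}{a_u}\sum_j\epx{\Nm{u}j}\,g|_{k-1}\sigma_j$, (ii) $g=\tfrac{i}{\sD}\sum_v g_v|_{k-1}\Matx{D&0\\0&1}$, (iii) $g_v|_{k-1}\Matx{D&0\\0&1}\sigma_j=D^{1-k}\epx{-\Nm{v}j}g_v$, inserts \thref{prop:transformation_plus_forms}, and is left with exactly your sum $\sum_s\psi_m((a+cj)s-\lambda)\epx{-D\Nm{v}s/m}$, which it then factors by CRT into an odd part (where $p\mid a+cj$ makes $\psi_{m'}$ drop its $s$-dependence and your identity $(nc)(-\lambda)\equiv1\bmod p$ gives $\psi_{m'}(-\lambda)=\psi_{m'}(nc)$) and a $2$-part handled by the case split $m=\mu$, $m=2\mu$, $m=4\mu$. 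Your diagnosis that the extra congruence on $\kappa$ modulo $m/\mu$ is precisely what forces $-\lambda\equiv c/n\bmod 2^e$, so that the $2$-adic Gauss sum collapses to $R_\sigma(v,j)$ times the $2$-part of $\GS(\psi_m;nc)$, matches the paper's computation exactly.
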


It is not difficult to see that if $m \not= \mu$ then $2^f | b + dj + c$ from $ad - bc = 1$ and $2^f | a + cj$ and so $\frac{(b + dj + c)/2^f}{(a+cj)/2^f} \bmod \frac{m}{\mu}$ is well-defined.
\begin{pnote}
To see that, suppose $f \geq 1$ so $m$ is even. Recall that $\gcd(m, c) = 1$ by \thref{lm:various_gcd} (ii) so $c$ must be odd. To see that $\val_2(a + cj) = \val_2(\mu)$ if $m \not= \mu$, observe that
$$\val_2(\mu) = \min\{\val_2(D), \val_2(a + cj)\}$$
since $\mu = \gcd(D, a + cj)$. Thus, if $\val_2(\mu) \not= \val_2(a + cj)$ then $\val_2(\mu) = \val_2(D) < \val_2(a + cj)$ and so $\val_2(\mu) = \val_2(m)$ whence we reach a contradiction $m = \mu$. In fact, we have shown that $f < \val_2(D) \leq 3$ (which implies $a + cj \not= 0$). From $2^f | a + cj$, we get $j \equiv \frac{-a}{c} \equiv -ac \bmod 2^f$ where the latter is because we always have $x^2 \equiv 1 \bmod 8$ if $x$ is odd and here $f < 3$. Thus, $b + dj + c \equiv b + d(-ac) + c \equiv b + (1 - da)c \equiv b + (-bc)c \equiv b - c^2 b \equiv b - b \equiv 0 \bmod 2^f$.
\end{pnote}
Also, note that such $\kappa$ always exists for $\gcd(n, \frac{m}{\mu}) \dv \gcd(n, m) = 1$ so we can solve for $\kappa$ by Chinese Remainder Theorem. 

\begin{proof}
For $g \in \MFDNp$, observe the following
\begin{enumerate}
\item $g_u = \frac{-i D^{k-3/2}}{a_u} \sum_{j = 0}^{D-1} \epx{\Nm{u} j} g|_{k-1} \sigma_j$
\item $g = \frac{i}{\sD} \sum_{v \in [\DK]} g_v|_{k-1} \Matx{D & 0 \\ 0 & 1}$
\item $g_v|_{k-1} \Matx{D & 0 \\ 0 & 1} \sigma_j = D^{1-k} \epx{- \Nm{v} j} g_v$
\end{enumerate}
and then use \thref{prop:transformation_plus_forms}, we arrive at
\begin{align*}
g_u|_{k-1} \sigma &= \frac{-i D^{k-3/2}}{a_u} \sum_{j = 0}^{D - 1} \sum_{s = 0}^{m-1} \frac{\epx{\Nm{u} j} \GS(\psi_m; s (a + cj) - \lambda)}{m \; \psi_n(a + cj)} g|_{k-1} \sigma_{n s + \kappa} \text{ by (i) and \thref{prop:transformation_plus_forms}}\\
&= \frac{-i D^{k-3/2}}{a_u} \sum_{j, s} \frac{\epx{\Nm{u} j} \GS(\psi_m; s (a + cj) - \lambda)}{m \psi_n(a + cj)} \frac{i}{\sD} \sum_{v \in [\DK]} g_v|_{k-1} \Matx{D & 0 \\ 0 & 1}|_{k-1} \sigma_{n s + \kappa} \text{ by (ii)}\\
&= \frac{-i D^{k-3/2}}{a_u} \sum_{j, s} \frac{\epx{\Nm{u} j} \GS(\psi_m; s (a + cj) - \lambda)}{m \psi_n(a + cj)} \frac{i}{\sD} \sum_{v \in [\DK]} D^{1-k} \epx{-\Nm{v} (ns + \kappa)} g_v \text{ by (iii)}\\
&= \frac{1}{D\aDNu} \sum_{v \in [\DK]} \sum_{j, s} \frac{\GS(\psi_m; s (a + cj) - \lambda) \epx{\Nm{u} j - \Nm{v} (ns + \kappa)}}{m \; \psi_n(a + cj)} g_v\\
&= \frac{1}{D\aDNu} \sum_v \sum_{j = 1}^{D - 1} \frac{\GS(\psi_m) \epx{\Nm{u} j - \Nm{v} \kappa}}{m \; \psi_n(a + cj)} \parens{\sum_{s = 0}^{m - 1} \psi_m(s (a + cj) - \lambda) \epx{-D\Nm{v} \frac{s}{m}}} g_v
\end{align*}
With the appropriate requirement for $\kappa$ as in the statement of the proposition, a careful case-by-case analysis yields the inner most sum
\begin{align*}
\sum_{s = 0}^{m - 1} &\psi_m(s (a + cj) - \lambda) \epx{-D\Nm{v} \frac{s}{m}} \\
&= \begin{cases}
\frac{m}{2} \psi_m(nc) \parens{1 + \epx{-\frac{(a + cj) D\Nm{v}}{2m}} \chi_2(5 - 2nc)} \; \delta_{\mu, \gcd(D\Nm{v},m)} &\tif m = 4\mu,\\
m \; \psi_m(nc) \; \delta_{\mu, \gcd(D\Nm{v},m)} &\tif m \not= 4\mu.
\end{cases}
\end{align*}
where
$$\delta_{X,Y} := \begin{cases}
1 &\tif X = Y,\\
0 &\totherwise
\end{cases}$$
is the usual Kronecker delta.

\begin{pnote}
Here is the derivation of the value of the sum over $s$ above. For simplicity, we set $A = -D|v|^2$ and let
$$V := \sum_{s = 0}^{m - 1} \psi_m(s (a + cj) - \lambda) \epx{A \frac{s}{m}}$$
which we are to evaluate. 

If $m = \mu$ (which always happen when $D$ is square-free), we have $m \dv a + cj$ so the factor $\psi_m(s (a + cj) - \ld) = \psi_m(-\ld) = \psi_m(nc)$ can be pulled out of the sum and we find
$$V = \psi_m(nc) \sum_{s = 0}^{m - 1} \epx{A \frac{s}{m}} = \psi_m(nc) \; m \; \delta_{A, 0}^{\bmod m}.$$

If $m \not= \mu$, we either have $m = 2\mu$ or $m = 4\mu$. Write $D = 2^e \; D'$ with $D'$ is odd and  square-free. Then $m = 2^e m'$ where $m' = \gcd(a + cj, D')$. Using Chinese Remainder Theorem, we find that\begin{align}
V &= \parens{\sum_{s \bmod m'} \psi_{m'}(s (a + cj) - \lambda) \epx{A(2^{-e} \bmod m')\frac{s}{m'}}} \parens{\sum_{s \bmod 2^e} \psi_{2^e}(s (a + cj) - \lambda) \epx{A h\frac{s}{2^e}}}
\label{eq:V_as_product_by_CRT}
\end{align}
where $h = m'^{-1} \bmod 2^e$ is odd. Here, we note a principal strategy to reduce an exponential sum into a product by Chinese Remainder Theorem which we shall exploit over and over again: Observe that $\epx{\Z} = 1$ so $\epx{r} = \epx{s}$ if $r, s \in \Q$ such that $r \equiv s \bmod \Z$. In particular, if $M = M_1 M_2 \cdots M_k$ with $\gcd(M_i, M_j) = 1$ for all $i \not= j$ then for any $x \in \Z$, we have
\begin{align}
\epx{\frac{x}{M}} &= \epx{\sum_{i = 1}^{k} \parens{\prod_{j \not= i} M_j^{-1} \bmod M_i} \frac{x}{M_i}}
\label{eq:split_exponential}
\end{align}
for one can check that
\begin{align*}
& \; \frac{x}{M} \equiv \sum_{i = 1}^{k} \parens{\prod_{j \not= i} M_j^{-1} \bmod M_i} \frac{x}{M_i} \qquad \mod \Z\\
\iff & \; x \equiv \sum_{i = 1}^{k} \parens{\prod_{j \not= i} M_j^{-1} \bmod M_i} x \prod_{j \not= i} M_j \qquad \mod M \Z\\
\iff & \; x \equiv \sum_{i = 1}^{k} \parens{\prod_{j \not= i} M_j^{-1} \bmod M_i} x \prod_{j \not= i} M_j \qquad \mod M_i \text{ for all } i \text{, by Chinese Remainder Theorem}\\
\iff & \; x \equiv \parens{\prod_{j \not= i} M_j^{-1} \bmod M_i} x \prod_{j \not= i} M_j \qquad \mod M_i \text{ for all } i
\end{align*}
which is evident. Now \eqref{eq:split_exponential} could be exploited to convert $\epx{r}$ to a product of simpler root of unity $\epx{r_i}$ with $r_i$ having smaller denominator. For instance, to derive \eqref{eq:V_as_product_by_CRT}, we use \eqref{eq:split_exponential} with $M = m, M_1 = m', M_2 = 2^e$ to write
$$\epx{A \frac{s}{m}} = \epx{(2^{-e} \bmod m') \frac{As}{m'} +  ((m')^{-1} \bmod 2^e) \frac{As}{2^e}}$$
to recognize that the summand in the definition of $V$ can be split as a product of two factors
$$\parens{\psi_{m'}(s (a + cj) - \lambda) \epx{(2^{-e} \bmod m') \frac{As}{m'}}} \parens{\psi_{2^e}(s (a + cj) - \lambda) \epx{((m')^{-1} \bmod 2^e) \frac{As}{2^e}}}$$
which only depend on $s_1 := s \bmod m'$ and $s_2 := s \bmod 2^e$ respectively; whence we get \eqref{eq:V_as_product_by_CRT} by a change of variable and Chinese Remainder Theorem.

The first factor on the RHS of \eqref{eq:V_as_product_by_CRT} can be computed explicitly as follow
\begin{align*}
\sum_{s \bmod m'} \psi_{m'}(s (a + cj) - \lambda) \epx{A(2^{-e} \bmod m')\frac{s}{m'}} &= \psi_{m'}(-\ld) \; m' \; \delta_{A, 0}^{\bmod m'}\\
&= \psi_{m'}(nc) \; m' \; \delta_{A, 0}^{\bmod m'}
\end{align*}
just like the case $m = \mu$: That is because we always have $m' \dv a + cj$ and so
$$\psi_{m'}(s (a + cj) - \ld) = \psi_{m'}(-\ld) = \psi_{m'}(nc)$$
from $-\ld \equiv \frac{-b - dj}{n} \bmod m'$ and
$$- b - dj \equiv -\frac{ad - 1}{c} - dj \equiv \frac{-ad+1-cdj}{c} \equiv \frac{1-d(a + cj)}{c} \equiv \frac{1}{c} \bmod m'.$$

It remains to compute the second factor
$$S := \sum_{s \bmod 2^e} \psi_{2^e}(s (a + cj) - \lambda) \epx{A h\frac{s}{2^e}}$$
on the RHS of \eqref{eq:V_as_product_by_CRT}. To do that, we write $a + jc = 2^f k$ with $k$ odd and $1 \leq f \leq 2$ and $f \leq e$. Note that $\mu = 2^f m'$ and $2 \dv a + cj$ so $\lambda = \frac{b + dj - \kappa(a + cj)}{n} \equiv \frac{b + dj}{n} \bmod 2$ is an odd number. Let us temporarily denote by $\psi$ for $\psi_{2^e}$.

Then identifying $s \bmod 2^e$ with $2^{e - f} s + t$ where $0 \leq s < 2^f$ and $0 \leq t < 2^{e - f}$ and notice that
$$\psi([2^{e - f} s + t] 2^f k - \ld)
= \psi(2^f k t - \ld)$$
we get
\begin{align*}
S &= \sum_{s = 0}^{2^f - 1} \quad \sum_{t = 0}^{2^{e - f} - 1} \quad \psi (2^f k t - \ld) \; \epx{A h \; \frac{2^{e - f} s + t}{2^e}}\\
&= \sum_{s = 0}^{2^f - 1} \quad \sum_{t = 0}^{2^{e - f} - 1} \quad \psi (2^f k t - \ld) \; \epx{A h \parens{\frac{s}{2^f} + \frac{t}{2^e}}}\\
&= \sum_{t = 0}^{2^{e - f} - 1} \psi (2^f k t - \ld) \epx{A h \frac{t}{2^e}} \parens{\sum_{s = 0}^{2^f - 1} \epx{A h \frac{s}{2^f}}}\\
&= \sum_{t = 0}^{2^{e - f} - 1} \psi(t 2^f k - \ld) \epx{A h \frac{t}{2^e}} \; 2^f  \; \delta_{A, 0}^{\bmod 2^f}\\
&=  2^f  \; \delta_{A, 0}^{\bmod 2^f} \parens{\sum_{t = 0}^{2^{e - f} - 1} \psi(2^f k t - \ld) \epx{A h \frac{t}{2^e}}}
\end{align*}

Assume now that $A \equiv 0 \bmod 2^f$. By change of variable $kt$ to $t$ and let $A' := \frac{A}{2^f} h k^{-1} \bmod 2^{e - f}$, we get
\begin{align*}
S &= 2^f \sum_{t = 0}^{2^{e - f} - 1} \psi(2^f t - \ld) \epx{A' \frac{t}{2^{e - f}}}\\
&= 2^f \psi(-\ld) \sum_{t = 0}^{2^{e - f} - 1} \psi(2^f t - \ld) \psi(-\ld) \epx{A' \frac{t}{2^{e - f}}}\\
&= 2^f \psi(-\ld) \sum_{t = 0}^{2^{e - f} - 1} \psi(\ld^2 - 2^f t \ld) \epx{A' \frac{t}{2^{e - f}}}
\end{align*}

\begin{itemize}
\item If $e - f = 1$ i.e. $m = 2\mu$ then we get from
$$\epx{A' \frac{t}{2^{e - f}}} = \epx{A' t / 2} = (-1)^{A't}$$
that
$$S = 2^f \psi(-\ld) \parens{1 + (-1)^{A'} \psi(\ld^2 - 2^f \ld)}$$
Recall that we have
$$\psi(n) = \begin{cases}
\jcb{-4}{n} = (-1)^{(n-1)/2}\\
\jcb{-8}{n} = (-1)^{(n^2 - 4n + 3)/8} &(= 1 \tif n \equiv 1, 3 \bmod 8)\\
\jcb{8}{n} = (-1)^{(n^2 - 1)/8} &(= 1 \tif n \equiv 1, 7 \bmod 8)
\end{cases}$$
So if $f = 1, e = 2$, we get
\begin{align*}
S &= 2 \psi(-\ld) \parens{1 + (-1)^{A'} (-1)^{((\ld^2 - 2 \ld - 1)/2}}\\
&= 2 \psi(-\ld) \parens{1 - (-1)^{A'}} \text{ since } \ld^2 \equiv 1 \bmod 8 \tif \ld \text{ odd}\\
&= 4 \psi(-\ld) \delta_{A', 1}^{\bmod 2}
\end{align*}

If $f = 2, e = 3$ then observe that we always have
$$\ld^2 - 4 \ld \equiv 5 \bmod 8.$$
(Write $\ld = 4x + y$ with $y = \pm 1$ then $\ld^2 \equiv y^2 \bmod 8$ and $4 \ld \equiv 4y \bmod 8$. If $y = 1$, $y^2 - 4y = 1 - 4 = -3 = 5 \bmod 8$. If $y = -1$ then $y^2 - 4y = 1 + 4 = 5 \bmod 8$.)
Note that we always have $\psi(5) = -1$ in both cases of $\psi = \jcb{\pm 8}{*}$ and so we need $2 \ndv A'$ in order for $S$ to be non-zero, in which case $S = 8 \psi(-\ld)$.

So in this case, we need $\val_2(A) = f$ for $S \not= 0$. Combine with $m' \dv A$, we need $\mu \dv A$ but $2\mu \ndv A$ for $S$ to be non-zero; which can be expressed as $\gcd(A, m) = \mu$. Thus
$$V = m \psi(-\ld) \psi_{m'}(nc) \delta_{\mu, \gcd(A, m)}.$$

If we can choose $\kappa$ such that
$$-\ld \equiv \frac{c}{n} \bmod 2^e$$
as well then we can merge $\psi(-\ld) \psi_{m'}(nc) = \psi_m(nc)$ to get the description
$$V = m \; \psi_m(nc) \delta_{\mu,\gcd(A, m)}$$
that is independent of $\kappa$ and $\lambda$. Well,
\begin{align*}
-\ld \equiv \frac{c}{n} \bmod 2^e
&\iff \kappa(a + cj) - (b + dj) \equiv c \bmod 2^e\\
&\iff \kappa 2^f k - b - dj \equiv c \bmod 2^e\\
&\iff \kappa \equiv \frac{(b + dj + c) / 2^f}{k} \bmod 2^{e-f} = \frac{m}{\mu}
\end{align*}
as long as $2^f \dv b + dj + c$.
Note that $\gcd(c, m) = 1$ so $c$ must be odd for $m \not= \mu$ to happen.
Since $2^f \dv \mu \dv a + cj$, $j \equiv -\frac{a}{c} \bmod 4$ and so $b + d j \equiv b + d(-\frac{a}{c}) \equiv \frac{cb - ad}{c} \equiv \frac{-1}{c} \equiv -c \bmod 2^f$ as $c^2 \equiv 1 \bmod 8$ when $c$ is odd.

\item If $e - f = 2$ i.e. $f = 1, e = 3, m = 4\mu$ then
\begin{align*}
S &= 2^f \psi(-\ld) \parens{1 + \epx{\frac{A'}{4}} \psi(\ld^2 - 2 \ld) + \epx{\frac{A'}{2}} \psi(\ld^2 - 4\ld) + \epx{\frac{3 A'}{4}} \psi(\ld^2 - 6\ld)}\\
&= 2^f \psi(-\ld) \parens{1 + \epx{\frac{A'}{4}} \psi(5 + 2 \ld) + (-1)^{A'} \psi(5) + (-1)^{A'} \epx{\frac{A'}{4}} \psi(5 - 2\ld)}\\
&\hspace{3cm} \text{ from previous observation that } \ld^2 - 4 \ld \equiv 5 \bmod 8\\
&= 2^f \psi(-\ld) \parens{1 - (-1)^{A'} + \epx{\frac{A'}{4}} \psi(5 + 2 \ld) \parens{1 + (-1)^{A'} \psi(25 - 4\ld^2)}}\\
&= 2^f \psi(-\ld) (1 - (-1)^{A'}) \parens{1 + \epx{\frac{A'}{4}} \psi(5 + 2 \ld)}\\
&= 2^{e - 1} \psi(-\ld) \parens{1 + \epx{\frac{A'}{4}} \psi(5 + 2 \ld)} \delta_{A',1}^{\bmod 2}
\end{align*}
Here, note that $\ld^2 = 4x + 1$ for some $x$ so $25 - 4 \ld^2 = 25 - 4(4x + 1) \equiv 1 - 4 - 16x \equiv -3 \equiv 5 \bmod 8$ and $\psi(5) = -1$. As before, we can always choose $\kappa$ so that $-\ld \equiv \frac{c}{n} \bmod 8$ (due to $f = 1$). Then
$$\psi(5 + 2 \ld) = \psi\parens{5 - 2 \frac{c}{n}} = \psi(5 - 2nc).$$
[Note that $5 - 2nc \equiv 3, 7 \bmod 8$ always so unfortunately the value does depend on $n$.] We recall
\begin{align*}
A' &= \frac{A}{2^f} h k^{-1} \bmod 2^{e - f}\\
&= \frac{A}{2} (m'^{-1} \bmod 8) \parens{\frac{a+cj}{2}}^{-1} \bmod 4\\
&= \frac{A}{2} m'^{-1} \frac{a+cj}{2} \bmod 4 &\text{ since } x^{-1} = x \bmod 4\\
&= \frac{A}{2} \frac{a+cj}{\mu} \bmod 4
\end{align*}
so
$$\epx{\frac{A'}{4}} = \epx{\frac{(a + cj) A}{8\mu}} = \epx{\frac{(a + cj) A}{2m}} \in \{\pm i\}.$$
\end{itemize}

In all cases, we have found that we need $\val_2(A) = \val_2(\mu)$ and $m' \dv A$ (which combines to the equivalent condition $\gcd(A, m) = \mu$; or one can also write $\mu \dv A, \; 2 \mu \ndv A$) in order that $V \not= 0$. As for the value, we found
$$V = \begin{cases}
\frac{m}{2} \psi_m(nc) \parens{1 + \epx{\frac{(a + cj) A}{2m}} \chi_2(5 - 2nc)} \; \delta_{\gcd(A,m), \mu} &\tif m = 4\mu,\\
m \; \psi_m(nc) \; \delta_{\gcd(A,m), \mu} &\tif m \not= 4\mu
\end{cases}$$
provided that we further insist
$$\kappa \equiv \frac{(b + dj + c)/2^f}{(a+cj)/2^f} \bmod \frac{m}{\mu}$$
in case $m \not= \mu$.
\end{pnote}
\end{proof}

\begin{remark}
We remark that in case $m = 4\mu$ (which can only happen if $8 \dv D$) and $\gcd(D\Nm{v},m) = \mu$, we have $\epx{-\frac{(a + cj) D\Nm{v}}{2m}} = \epx{-\frac{\frac{a + cj}{2} \frac{D\Nm{v}}{\mu}}{4}} \in \{\pm i\}$ only depends on $j \bmod 8$. Also, since $\chi_2(5) = -1$ for both possibilities of $\chi_2 = \jcb{\pm 8}{*}$, we have
$$\chi_2(5 - 2nc) =  (-1)^{\frac{nc+1}{4}} \chi_2(-1) = \begin{cases}
-\chi_2(-1) &\tif nc \equiv 1 \bmod 4,\\
\chi_2(-1) &\tif nc \equiv -1 \bmod 4.\\
\end{cases}$$
\end{remark}

\newcommand{\sumj}[1]{\sum_{\underset{\gcd(D\Nm{#1}, m) = \mu}{j = 0}}^{D - 1}}

\begin{prop}
Let $g \in \MFDNp$. Then $\phi_g = \sum g_u \theta_u$ is a Hermitian Jacobi form of level $N$ if and only if the equation
\begin{equation}
\sum_u \frac{M_{u,v}(\sigma) a_{w}}{D\aDNu} \sumj{w}  R_\sigma(w, j) \; \GS(\psi_m; nc) \; \psi_n(a + cj) \; \epx{\Nm{u} j - \Nm{w} \kappa} = \delta_{D\Nm{w},D\Nm{v}}^{\bmod D}
\label{eq:hjf_criterion}
\end{equation}
holds for any $\sigma \in \Gm_0(N)$ and any $v, w \in [\DK]$ such that $g_w \not= 0$.
\thlabel{prop:explicit_arithmetic_criterion_jacobi_form}
\end{prop}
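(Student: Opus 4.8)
The plan is to unwind what it means for $\phi_g = \sum_u g_u \theta_u$ to be a Hermitian Jacobi form of level $N$, using Definition~\ref{defn:jacobi_forms}. Conditions (ii) and (iii) of that definition hold essentially for free: condition (ii) (invariance under $[\lambda,\mu]$ for $\lambda,\mu\in\OK$) and the Fourier-expansion condition (iii) follow from the corresponding properties of the theta series $\theta_u$ together with the shape of the Fourier expansions of the $g_u$ (each $g_u$ is a genuine holomorphic $q$-series with bounded-below support in $\tfrac{1}{D}\Z_{\geq0}$, coming from the plus form $g\in\MFDN$), exactly as in Krieg's level-one argument and in \cite{Klosin2018}. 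So the entire content is condition (i): $\phi_g|_{k,1}[\epsilon A] = \phi_g$ for all $A\in\Gamma_0(N)$. Since $k$ is divisible by $|\OK^\times|$ the $\epsilon$-ambiguity is irrelevant, so it suffices to check $\phi_g|_{k,1}\,\sigma = \phi_g$ for all $\sigma\in\Gamma_0(N)$.

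Next I would expand both sides in the theta basis. On one hand, by the transformation law \eqref{eq:transformation_matrix_theta_series} of the theta series,
\[
\phi_g|_{k,1}\,\sigma = \sum_v \Bigl(\sum_u g_u|_{k-1}\sigma \cdot M_{u,v}(\sigma)\Bigr)\theta_v,
\]
where I have used that $|_{k,1}\sigma$ on the product $g_u\theta_u$ splits as $(g_u|_{k-1}\sigma)(\theta_u|_{1,1}\sigma)$ because the weight-$k$ index-$1$ slash on a product of a weight-$(k-1)$ function of $\tau$ alone and a weight-$1$ theta is multiplicative in the automorphy factors (the $j(\sigma;\tau)^{-k}$ and the exponential cross-term distribute correctly). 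On the other hand $\phi_g = \sum_v g_v\theta_v$. Since the $\theta_v$ are linearly independent over the ring of holomorphic functions of $\tau$ (Haverkamp, Prop.~5.1, cited in the excerpt), equality of the two sides is equivalent to the system
\[
\sum_u M_{u,v}(\sigma)\, g_u|_{k-1}\sigma = g_v \qquad \text{for all } v\in[\DK].
\]
Now I substitute the transformation formula for $g_u|_{k-1}\sigma$ from \thref{prop:transformation_g_u}, which expresses $g_u|_{k-1}\sigma$ as a $\C$-linear combination $\sum_w c_{u,w}(\sigma)\,g_w$ with
\[
c_{u,w}(\sigma) = \frac{1}{D\aDNu}\sumj{w} R_\sigma(w,j)\,\GS(\psi_m;nc)\,\psi_n(a+cj)\,\epx{\Nm{u}j - \Nm{w}\kappa}.
\]
Plugging this in, the system becomes $\sum_w\bigl(\sum_u M_{u,v}(\sigma)c_{u,w}(\sigma)\bigr)g_w = g_v$ for every $v$.

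The final step is to match coefficients of the $g_w$. The $g_w$ with $g_w\neq 0$ are linearly independent (they are supported on disjoint residue classes $\ell \equiv -D\Nm{w}\bmod D$ in the $q$-expansion, so distinct $w$ with the same $\Nm{w}\bmod 1$—i.e.\ the same value of $D\Nm{w}\bmod D$—give the same series up to a nonzero scalar, and the relevant identification is precisely $D\Nm{w}\equiv D\Nm{v}\bmod D$). Hence $\sum_w(\cdots)g_w = g_v$ holds iff for each $w$ with $g_w\neq0$ the coefficient of $g_w$ equals the coefficient of $g_w$ appearing in $g_v$, namely $\delta^{\bmod D}_{D\Nm{w},D\Nm{v}}$. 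This is exactly \eqref{eq:hjf_criterion}. Conversely, if \eqref{eq:hjf_criterion} holds for all $\sigma\in\Gamma_0(N)$ and all $v,w$ with $g_w\neq0$, then running the argument backwards gives $\phi_g|_{k,1}\sigma=\phi_g$, and combined with conditions (ii),(iii) above, $\phi_g\in\JF_{k,1}(N)$; moreover $\phi_g$ is special by construction (its coefficients $\a^*_{\phi_g}(\ell)$ depend only on $\ell$, equivalently $c_{\phi_g}(m,u)$ depends only on $m-\Nm{u}$), so $\phi_g\in\JF^*_{k,1}(N)$.

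\emph{Main obstacle.} The crux—and the place where care is genuinely needed—is the \emph{reduction to $\Gamma_0(N)$} in the opening step. For level $N=1$ one checks the functional equation only on the two generators $T$ and $J$ of $\SL2(\Z)$, but $\Gamma_0(N)$ has no such small, convenient generating set (as the introduction stresses). The argument above sidesteps this by \emph{not} reducing to generators at all: conditions (ii) and (iii) are verified structurally for all of $\JF$, and condition (i) is then reformulated, via theta-linear-independence and \thref{prop:transformation_g_u}, into the single arithmetic identity \eqref{eq:hjf_criterion} that must hold for \emph{every} $\sigma\in\Gamma_0(N)$ simultaneously. So the difficulty is displaced entirely onto verifying \eqref{eq:hjf_criterion}, which is the business of section~\ref{sec:proof_hermitian_sk_lift}; the present proposition is the clean bookkeeping that isolates it, and the only subtlety here is being careful that (a) the slash action really does distribute over the product $g_u\theta_u$ as claimed, and (b) the linear independence used at the end is of the $g_w$ with $g_w\neq0$ (not of all $\theta$'s), with the correct equivalence being $D\Nm{w}\equiv D\Nm{v}\bmod D$.
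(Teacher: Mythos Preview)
Your approach is the same as the paper's: reduce condition (i) of Definition~\ref{defn:jacobi_forms} to $\sum_u M_{u,v}(\sigma)\,g_u|_{k-1}\sigma = g_v$ via linear independence of the $\theta_v$, substitute \thref{prop:transformation_g_u}, and then match coefficients of the $g_w$. This is correct in outline.

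There is, however, a genuine gap in your coefficient-matching step that accounts for the factor $a_w$ in \eqref{eq:hjf_criterion}. You correctly note that $g_w = g_{w'}$ whenever $D\Nm{w}\equiv D\Nm{w'}\bmod D$ (they are exactly equal, not merely equal up to scalar), so the $g_w$ are indexed not by $[\DK]$ but by norm classes. When you write $\sum_w\bigl(\sum_u M_{u,v}c_{u,w}\bigr)g_w = g_v$ and then ``match coefficients of $g_w$'', you must first group the outer sum over $w$ by norm class. The coefficient of $g_w$ is then $\sum_{w'\sim w}\sum_u M_{u,v}c_{u,w'}$, not $\sum_u M_{u,v}c_{u,w}$. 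To collapse this to $a_w\sum_u M_{u,v}c_{u,w}$ and recover the $a_w$ in \eqref{eq:hjf_criterion}, you need the observation that $c_{u,w'}=c_{u,w}$ for all $w'$ in the norm class of $w$. This requires checking three things: that $\epx{-\Nm{w'}\kappa}=\epx{-\Nm{w}\kappa}$ (clear since $\kappa\in\Z$), that $\gcd(D\Nm{w'},m)=\gcd(D\Nm{w},m)$ so the range of $j$ is unchanged, and that $R_\sigma(w',j)=R_\sigma(w,j)$ (the only nontrivial case being $m=4\mu$, where one checks $\epx{-(a+cj)D\Nm{w'}/(2m)}$ depends only on $D\Nm{w'}\bmod D$). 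The paper makes exactly these three checks explicitly; your proposal elides them, and without them the appearance of $a_w$ is unexplained.
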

Here, for any integers $X, Y, M$, $$\delta_{X, Y}^{\bmod M} := \begin{cases}
1 &\tif X \equiv Y \bmod M,\\
0 &\totherwise.
\end{cases}$$

\begin{proof}
\newcommand{\sumwpw}{\sum_{\underset{D\Nm{w'} \equiv D\Nm{w} \bmod D}{w' \in [\DK]}}}
\newcommand{\sumu}{\sum_{u \in [\DK]}}

From transformation formula for $\theta_u$, we find that $\phi_g$ is Jacobi form of level $N$ if and only if
$$\sumu M_{u,v}(\sigma) g_u|_{k-1} \sigma = g_v \qquad \text{for all } v \in [\DK] \tand \sigma \in \Gm_0(N)$$
By \thref{prop:transformation_g_u}, the above equation is the same as
$$\sumu \frac{M_{u,v}(\sigma)}{D\aDNu} \sum_{w \in [\DK]} \sumj{w} R_\sigma(w, j) \GS(\psi_m; nc) \psi_n(a + cj) \epx{\Nm{u} j - \Nm{w} \kappa} g_w = g_v.$$
Now observe that $g_w = g_{w'}$ if $D\Nm{w} \equiv D\Nm{w'} \bmod D$ and conversely, if $D\Nm{w} \not\equiv D\Nm{w'} \bmod D$ (or equivalently, $\Nm{w} \not= \Nm{w'}$ in $\Q/\Z$) then $g_w$ and $g_{w'}$ have disjoint Fourier expansion so if they are non-zero, they must be linearly independent. So grouping the summation over $w$ by $D\Nm{w} \bmod D$, we find that the above equation holds if and only if
\begin{align}
\sumu \frac{M_{u,v}(\sigma)}{D\aDNu} \sumwpw \sumj{w'} R_\sigma(w', j) \GS(\psi_m; nc) \psi_n(a + cj) \epx{\Nm{u} j - \Nm{w'} \kappa} = \delta_{D\Nm{w}, D\Nm{v}}^{\bmod D}
\label{eq:arithmetic_criterion_1}
\end{align}
for all $w \in [\DK]$ such that $g_w \not= 0$. Now if $D\Nm{w'} \equiv D\Nm{w} \bmod D$ then
\begin{itemize}
\item $\epx{-\Nm{w'} \kappa} = \epx{-\Nm{w} \kappa}$ as $\kappa \in \Z$;
\item $\gcd(D\Nm{w'}, m) = \gcd(D\Nm{w}, m)$ so the sum over $j$ such that $\gcd(D\Nm{w'}, m) = \mu$ goes through the same set of $j$'s independent of $w' \in [\DK]$; and also
\item $R_\sigma(w', j) = R_\sigma(w, j)$: obviously if $m \not= 4\mu$ and in case $m = 4\mu$, the only term that involve the $w$ in the definition of $R_\sigma$ is
$$\epx{-\frac{(a + cj) D\Nm{w'}}{2m}} \qquad \text{ which equals } \qquad \epx{-\frac{(a + cj) D\Nm{w}}{2m}}$$
as long as $D\Nm{w'} \equiv D\Nm{w} \bmod D$.
\end{itemize}
Thus, we can replace all occurrences of $w'$ in the summand of \eqref{eq:arithmetic_criterion_1} with $w$ and the \eqref{eq:arithmetic_criterion_1} reduces to the equivalent
$$\sumu \frac{M_{u,v}(\sigma)}{D\aDNu} \sumwpw \underbrace{\sumj{w} R_\sigma(w, j) \GS(\psi_m; nc) \psi_n(a + cj) \epx{\Nm{u} j - \Nm{w} \kappa}}_{S} = \delta_{D\Nm{w},D\Nm{v}}^{\bmod D}$$

One can now see that the summand $S$ is free of the summation variable $w'$ and so the middle sum over $w'$ is just $a_w S$ where $a_w$ is the number of $w' \in [\DK]$ such that $D\Nm{w'} \equiv D\Nm{w} \bmod D$. Hence, we have \eqref{eq:hjf_criterion}.
\end{proof}

\begin{remark}
Observe that the equation \eqref{eq:hjf_criterion} does not have modular forms in it:
It purely depends on the arithmetic of the field $K$.
Thus, if it is true, one could expect that it is true in general for all $\sigma \in \SL{2}(\Z)$ and $v, w \in [\DK]$.
(Note that even if the equation holds for all $\SL{2}(\Z)$, this does not implies that $\phi_g$ is Jacobi form of level 1; since the computation of $g|_{k-1} \sigma_j \sigma$ we used along the way is only valid for $\sigma \in \Gamma_0(N)$.)
\end{remark}

\begin{remark}
We remark here that one technique to show \eqref{eq:hjf_criterion} is true for higher level is to use existence of lifting for level 1 given by Krieg. In section 5 of \cite{Krieg1991}, Krieg defined the Eisenstein series
$$E_{k-1}^*(\tau) = \sum_{\underset{\gcd(m,n) = 1}{D = mn}} \psi_m(-1) f_{k-1}(\tau; \psi_m, \psi_n)$$
and he showed that
$$\MFDp = \C E_{k-1}^* \oplus \CF_{k-1}^+(D, \chi)$$
for all even $k > 2$. The Fourier coefficient of $E_{k-1}^*$ is given by
$$\a_\ell(E_{k-1}^*) = a_D(\ell) \sum_{d \dv \ell} \chi(d) d^{k-2}$$
for $(\ell, D) = 1$. Thus, $(E_{k-1}^*)_w \not= 0$ for all $w \in [\DK]$ such that $\gcd(D\Nm{w}, D) = 1$. Thus, by Krieg's isomorphism for the case $N = 1$, we know that \eqref{eq:hjf_criterion} is true for all $\sigma \in \Gm$ and all $w, v$ such that $\gcd(D\Nm{w}, D) = 1$. When $D$ is prime, the remaining class $w \in [\DK]$ such that $\gcd(D\Nm{w}, D) \not= 1$ is $w = 0$ and one easily works out the equation to show surjectivity in the case of interest in \cite{Klosin2018}. We will not pursue this direction.
\end{remark}

Our next result reduces the verification of the equation in \thref{prop:explicit_arithmetic_criterion_jacobi_form} to that of representatives of each equivalence class of $\Gm_0(N)$ under the equivalence relation induced by $\Gm_0(D)$, namely $\alpha \sim \beta$ if $\alpha = \beta \gamma$ for some $\gamma \in \Gm_0(D)$. 

\begin{prop}
If the equation \eqref{eq:hjf_criterion} in \thref{prop:explicit_arithmetic_criterion_jacobi_form} holds for $\sigma \in \SL{2}(\Z)$ and every $w, v \in [\DK]$ then it also holds for $\sigma \gm$ and every $w, v \in [\DK]$ for any $\gm \in \Gm_0(D)$. In particular, \eqref{eq:hjf_criterion} holds for all $\sigma \in \Gm_0(D)$.
\thlabel{prop:reduction_to_Gamma(D)_classes}
\end{prop}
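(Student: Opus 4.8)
The plan is to track how the left-hand side $L_{v,w}(\sigma)$ of \eqref{eq:hjf_criterion} changes under $\sigma\mapsto\sigma\gm$, $\gm\in\Gm_0(D)$, by combining the multiplicativity of the theta-transformation matrix with a drastic simplification that is special to $\Gm_0(D)$. Here I write $\mathcal T_{u,w}(\sigma):=\frac{1}{D\aDNu}\sumj{w}R_\sigma(w,j)\,\GS(\psi_m;nc)\,\psi_n(a+cj)\,\epx{\Nm{u}j-\Nm{w}\kappa}$ for the coefficient of $g_w$ in the expansion of $g_u|_{k-1}\sigma$ from \thref{prop:transformation_g_u} (with $\sigma=\Mabcd$), regarded now as a purely arithmetic function of $\sigma\in\SL{2}(\Z)$ and $u,w\in[\DK]$ — the formula makes sense for every $\sigma$ — and I set $L_{v,w}(\sigma):=\sum_{u\in[\DK]}M_{u,v}(\sigma)\,a_w\,\mathcal T_{u,w}(\sigma)$, so that \eqref{eq:hjf_criterion} for $\sigma$ and all $v,w$ is precisely $L_{v,w}(\sigma)=\delta^{\bmod D}_{D\Nm{v},D\Nm{w}}$ for all $v,w$. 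Because the action $|_{k,1}$ is associative, $M$ is a homomorphism $\SL{2}(\Z)\to\GL{d}(\C)$, so $M_{u,v}(\sigma\gm)=\sum_{u'}M_{u,u'}(\sigma)M_{u',v}(\gm)$.

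The crux is two arithmetic facts, both special to $\gm=\Mabcd\in\Gm_0(D)$ (where $D\mid c$, so $\gcd(a,D)=\gcd(d,D)=1$ and the first column of $\sigma\gm$ is congruent to $a$ times that of $\sigma$ modulo $D$). First: since $\gcd(a,D)=1$, one has $\mu_{\sigma\gm}(j)=\mu_\sigma(j)$ — hence $m_{\sigma\gm}(j)=m_\sigma(j)$, $n_{\sigma\gm}(j)=n_\sigma(j)$ — for every $j$, and since $\gcd(d,m)=1$ the index set $\{j:\gcd(D\Nm{dw},m)=\mu\}$ equals $\{j:\gcd(D\Nm{w},m)=\mu\}$; substituting this into the definition of $\mathcal T$, along with the shifts $\GS(\psi_m;nac)=\psi_m(a)\GS(\psi_m;nc)$ and $\psi_n(a(\cdots))=\psi_n(a)\psi_n(\cdots)$ (which contribute a joint factor $\psi_m(a)\psi_n(a)=\chi(a)$) and a compatible choice of $\kappa_{\sigma\gm}(j)$ relative to $\kappa_\sigma(j)$, one should arrive at
\[
\mathcal T_{u,w}(\sigma\gm)\;=\;\chi(a)\,\epx{-db\Nm{w}}\,\mathcal T_{u,dw}(\sigma)\qquad(\gm\in\Gm_0(D)),
\]
the $w$-index rescaled to $dw$. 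Second: for $\gm\in\Gm_0(D)$ the transformation of the theta functions is monomial — $D\mid c$ puts $\gm$ in $\Gm_0$ of the level of the lattice $\OK$ with its norm form — so, completing the square in the Gauss sum defining $M_{u',v}(\gm)$, we get $M_{u',v}(\gm)\neq0$ only for $v=au'$ in $[\DK]$, with $M_{dv,v}(\gm)=\chi(a)\,\epx{db\Nm{v}}$; the factor $\sign{a}$ occurring here in the $c=0$ case is exactly $\chi(a)$ because $\chi=\chi_K$ is odd.

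Granting these, the assembly is short. One has $a_{dw}=a_w$ (because $a_D(-d^2D\Nm{w})=a_D(-D\Nm{w})$, using $\chi_p(d)^2=1$ for $p\mid D$), and $d^2$ is a unit mod $D$, so $[v]\mapsto[dv]$ permutes the classes of $[\DK]$ under the equivalence $D\Nm{\cdot}\bmod D$. Assume $L_{v,w}(\sigma)=\delta^{\bmod D}_{D\Nm{v},D\Nm{w}}$ for all $v,w$. Plugging the first fact and $M(\sigma\gm)=M(\sigma)M(\gm)$ into $L_{v,w}(\sigma\gm)=\sum_u M_{u,v}(\sigma\gm)\,a_w\,\mathcal T_{u,w}(\sigma\gm)$ and collapsing the sum over $u$ — via the definition of $L$ and $a_{dw}=a_w$ — gives $L_{v,w}(\sigma\gm)=\chi(a)\,\epx{-db\Nm{w}}\sum_{u'}M_{u',v}(\gm)\,L_{u',dw}(\sigma)=\chi(a)\,\epx{-db\Nm{w}}\sum_{u'}M_{u',v}(\gm)\,\delta^{\bmod D}_{D\Nm{u'},D\Nm{dw}}$, and the second fact leaves only the term $u'=dv$, whence $L_{v,w}(\sigma\gm)=\chi(a)\,\epx{-db\Nm{w}}\,M_{dv,v}(\gm)\,\delta^{\bmod D}_{D\Nm{v},D\Nm{w}}=\delta^{\bmod D}_{D\Nm{v},D\Nm{w}}$ (using $\chi(a)^2=1$ and $\Nm{v}\equiv\Nm{w}\bmod\Z$ whenever $D\Nm{v}\equiv D\Nm{w}\bmod D$). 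Finally, \eqref{eq:hjf_criterion} holds for $\sigma=I_2$ and every $v,w$ by a direct check ($M(I_2)=\Id$, $\mu=m=1$, $n=D$, $R\equiv1$, $\kappa=j$, giving $L_{v,w}(I_2)=\frac{a_w}{D\,a_v}\sum_{j=0}^{D-1}\epx{(\Nm{v}-\Nm{w})j}=\delta^{\bmod D}_{D\Nm{v},D\Nm{w}}$), so taking $\sigma=I_2$ above yields \eqref{eq:hjf_criterion} for all $\gm\in\Gm_0(D)$.

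The step I expect to be the main obstacle is the proof of the two arithmetic facts straight from the Gauss-sum formulas — the rescaling identity for $\mathcal T$ and the monomiality of $M(\gm)$ with its diagonal value. The bookkeeping is heaviest when $8\mid D$, i.e. when $m=4\mu$ can occur: there one must track the correction factor $R_\sigma(w,j)$ and the phase $\epx{-\Nm{w}\kappa}$ through the substitution $\sigma\rightsquigarrow\sigma\gm$, which forces one to match quantities modulo $2m$ rather than merely modulo $D$.
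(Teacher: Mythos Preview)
Your proposal is correct and follows essentially the same route as the paper: both arguments rest on (i) the monomiality of $M(\gm)$ for $\gm\in\Gm_0(D)$, yielding $M_{u,v}(\sigma\gm)=M_{u,tv}(\sigma)\,\epx{xy\Nm{tv}}\,\chi(t)$, and (ii) the invariance $\mu_{\sigma\gm}=\mu_\sigma$, $R_{\sigma\gm}=R_\sigma$ together with the shift $\kappa_{\sigma\gm}=ty+t^2\kappa_\sigma$, which combine to reduce the left side for $(\sigma\gm,v,w)$ to a phase times the left side for $(\sigma,tv,tw)$; your packaging of (ii) as a single identity for $\mathcal T_{u,w}$ and your uniform treatment of the $c=0$ case via $\sign{a}=\chi(a)$ are cosmetic differences. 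One caution: you reuse $a,b,c,d$ for the entries of both $\sigma$ and $\gm$, which is confusing---the paper writes $\gm=\Matx{x&y\\Dz&t}$ to keep them separate, and you should do likewise when you fill in the ``main obstacle'' verifications.
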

\begin{proof}
\renewcommand{\Mxyzt}{\Matx{x & y \\ Dz & t}}
Fix a matrix $\gm = \Mxyzt \in \Gm_0(D)$. From
$$\sigma \gm = \Mabcd \Mxyzt = \Matx{ax + bDz & ay + bt\\cx + dDz & cy + dt}$$
we find that $\mu_{\sigma \gm}(j) = \mu_\sigma(j)$ which implies $m_{\sigma\gm}(j) = m_\sigma(j)$ and $n_{\sigma\gm}(j) = n_\sigma(j)$. For any $w \in [\DK]$ such that $\gcd(D\Nm{w}, m) = \mu$, it is easy to check that
$$R_{\sigma\gm}(w, j) = R_{\sigma}(w, j)$$
and finally, we can take
$$\kappa_{\sigma \gm}(j) = ty + t^2 \kappa_\sigma(j).$$

\begin{pnote}
We have
\begin{align*}
\kappa_{\sigma \gm}(j) &\equiv
\frac{ay + bt + (cy + dt) j}{ax + bDz + (cx + dDz) j} &\bmod n\\
&\equiv \frac{ay + bt + (cy + dt) j}{ax + c x j}\\
&\equiv \frac{t[y(a + cj) + t(b + dj)]}{a + c j}\\
&\equiv ty + t^2 \kappa_\sigma(j) &\bmod n
\end{align*}
and in case $m \not= \mu$, we have $\frac{m}{\mu} \dv D$ and so
$$f_{\sigma \gm} = \val_2(ax + bDz + (cx + dDz) j) = \val_2(ax + cxj) = \val_2(a + cj) = f_\sigma$$
because $\val_2(ax+cxj) < \val_2(D(bz + dzj))$ and thus we also have
\begin{align*}
\kappa_{\sigma \gm}(j) &\equiv \frac{(ay + bt + (cy + dt) j + (cx + dDz))/2^f}{(ax + bDz + (cx + dDz) j)/2^f}\\
&\equiv \frac{(ay + bt + (cy + dt) j + cx)/2^f}{(ax + cxj)/2^f} &\text{ since }D/2^f \equiv 0 \bmod m/\mu\\
&\equiv \frac{t (ay + bt + (cy + dt) j + cx)/2^f}{(a + cj)/2^f} &\text{ since } tx \equiv 1 \bmod D \Rightarrow tx \equiv 1 \bmod m/\mu\\
&\equiv \frac{(tay + bt^2 + tcyj + dt^2 j + c)/2^f}{(a + cj)/2^f}\\
&\equiv ty + t^2 \kappa_\sigma(j) \bmod m/\mu
\end{align*}
This shows that we can always take
$$\kappa_{\sigma \gm}(j) = ty + t^2 \kappa_\sigma(j)$$

Now notice that for any $w$ such that $\gcd(D\Nm{w}, m) = \mu$, we also have
$$R_{\sigma\gm}(w, j) = R_{\sigma}(w, j)$$
for if $m = 4\mu$,
\begin{align*}
&\frac{1}{2} \parens{1 + \epx{-\frac{(ax + bDz + (cx + dDz)j) D\Nm{v}}{2m}} \chi_2(5 - 2n(cx + dDz))}\\
&= \frac{1}{2} \parens{1 + \epx{-\frac{x (a + cj) D\Nm{v}}{2m}} \chi_2(5 - 2ncx)}
\end{align*}
so if $x \equiv 1 \bmod 4$, it is evident and if $x \equiv -1 \bmod 4$ then $\chi_2(5 - 2ncx) = -\chi_2(5 - 2nc)$ since $\chi_2(5 - 2ncx) = \chi_2(5 + 2ncx)$.
\end{pnote}

First, assume $z > 0$. In Lemma 3.1 of \cite{Klosin2018}, it was mentioned that $M_{u,v}(\gm) = \delta_{u, tv} \epx{xy |u|^2} \chi(t)$ and so from the fact that $M$ is a group homomorphism $M(\sigma\gm) = M(\sigma)M(\gm)$, one has $M_{u,v}(\sigma \gm) = M_{u,tv}(\sigma) \epx{xy\Nm{tv}} \chi(t)$.
So the left hand side of \eqref{eq:hjf_criterion} for $\sigma \gm$ is
\begin{align*}
&\sum_u \frac{M_{u,v}(\sigma\gm) a_{w}}{D\aDNu} \sumj{w} R_{\sigma\gm}(w, j) \GS(\psi_m; n(cx + dDz)) \\ &\hspace{5cm} \times \psi_n((ax + bDz) + (cx + dDz)j) \epx{\Nm{u} j - \Nm{w} \kappa_{\sigma \gm}}\\
&= \sum_u \frac{M_{u,tv}(\sigma) \epx{xy\Nm{tv}} \chi(t) a_{w}}{D\aDNu} \sumj{w} R_\sigma(w, j) \GS(\psi_m; ncx) \psi_n(ax + cxj) \\ &\hspace{8cm} \times \epx{\Nm{u} j - \Nm{w} (ty + t^2 \kappa_\sigma)}\\
&= \sum_u \frac{M_{u,tv}(\sigma) \epx{xyt^2\Nm{v}-ty \Nm{w}} \chi(tx) a_{w}}{D\aDNu} \sumj{w} \GS(\psi_m; nc) \psi_n(a + cj) \epx{\Nm{u} j - \Nm{tw} \kappa_\sigma}\\
&= \epx{ty(\Nm{v} - \Nm{w})} \underbrace{\sum_u \frac{M_{u,tv}(\sigma) a_{tw}}{D\aDNu} \sumj{w} \GS(\psi_m; nc) \psi_n(a + cj) \epx{\Nm{u} j - \Nm{tw} \kappa_\sigma}}_{\text{this is LHS of } \eqref{eq:hjf_criterion} \text{ with } tv, tw \text{ replacing } v, w \text{ respectively}}
\end{align*}
by observing that $xt \equiv 1 \bmod D$ whence $\epx{xyt^2\Nm{v}-ty \Nm{w}} = \epx{ty(\Nm{v} - \Nm{w})}$ as $(xt-1)\Nm{v} \in \Z$. Since $\gcd(t, D) = 1$, multiplication by $t$ induces automorphism of $[\DK]$ so that we have $a_{tw} =\aDNw$, $\gcd(D\Nm{tw},m) = \gcd(D\Nm{w}, m)$.

As indicated, the last sum is the left hand side of \eqref{eq:hjf_criterion} for $\sigma$ and $tv, tw$ in place of $v, w$. So we get $\delta_{D\Nm{tw},D\Nm{tv}}^{\bmod D}$ for the answer by the hypothesis that the equation holds for $\sigma$. Note that the same as $\delta_{D\Nm{tw},D\Nm{tv}}^{\bmod D} = \delta_{D\Nm{w},D\Nm{v}}^{\bmod D}$ again by $\gcd(t, D) = 1$. Evidently, if $D\Nm{w} \equiv D\Nm{v} \bmod D$ then $\epx{ty(\Nm{v} - \Nm{w})} = 1$. So \eqref{eq:hjf_criterion} holds for $\sigma \gamma$ and $v, w$.

The case $z = 0$ can be handled similarly. Note that this case includes the matrix $-I_2$ which accounts for the case where $z < 0$. Recall that $M_{w',v}(\gm) = \sign{x} \delta_{w',xv} \epx{xy\Nm{w'}}$ from the definition so that
\begin{align*}
M_{u,v}(\sigma \gm) &= \sum_{w'} M_{u,w'}(\sigma) M_{w',v}(\gm)\\
&= \sum_{w'} M_{u,w'}(\sigma) \sign{x} \delta_{w',xv} \epx{xy\Nm{w'}}\\
&= \sign{x} M_{u,xv}(\sigma) \epx{xy\Nm{xv}}
\end{align*}
and we get
\begin{align*}
&\sum_u \frac{\sign{x} M_{u,xv}(\sigma) \epx{xy\Nm{xv}} a_{w}}{D\aDNu} \sumj{w} R_\sigma(w, j) \GS(\psi_m; ncx) \psi_n(ax + cxj) \times \\ &\hspace{8cm} \times \epx{\Nm{u} j - \Nm{w} (ty + t^2 \kappa_\sigma)}\\
&= \sign{x} \chi(x) \epx{xy\Nm{xv} - \Nm{w} ty} \sum_u \frac{M_{u,xv}(\sigma) a_{w}}{D\aDNu} \sumj{w} R_\sigma(w, j) \GS(\psi_m; nc) \psi_n(a + cj) \times \\ &\hspace{12cm} \times \epx{\Nm{u} j - \Nm{tw} \kappa_\sigma)}
\end{align*}
The inner sum is again the left hand side of \eqref{eq:hjf_criterion} with $xv$ and $tw$ in place of $v, w$ respectively so we get $\delta_{D\Nm{xv},D\Nm{tw}}^{\mod D}$ as a result by assumption. Note that $z = 0$ implies $x = t = \pm 1$ so $\Nm{xv} = \Nm{x}$ and $\Nm{tw} = \Nm{w}$. Thus, if $D\Nm{xv} \equiv D\Nm{tw} \mod D$ then if $x = t = 1$ then $\sign{x} \chi(x) \epx{xy\Nm{xv} - \Nm{w} ty} = \sign{1} \chi(1) \epx{y\Nm{v} - \Nm{w} y} = 1$ obviously. Otherwise, if $x = t = -1$ then $\sign{x} \chi(x) \epx{xy\Nm{xv} - \Nm{w} ty} = \sign{-1} \chi(-1) \epx{-y\Nm{v} + \Nm{w} y} = 1$.

For the remaining statement, one easily check that the equation holds for $\sigma = I_2$ and all $v, w \in [\DK]$.
\end{proof}

\section{Hermitian \Maass lift}
\label{sec:proof_hermitian_sk_lift}

This section is dedicated to proving the main theorem of the article

\begin{thm}
Suppose that $\gcd(D, N) = 1$. Let $g \in \MFDNp$. For any $u \in [\DK]$, define
$$g_u(\tau) := \chi(N) \frac{-i \sD}{a_D(-D\Nm{u})} \sum_{\underset{\ell \equiv -D \Nm{u} \bmod D}{\ell = 0}}^{\infty} a_\ell(g) \epx{\frac{\ell \tau}{D}}$$
and
$$\phi_g := \sum_{u \in [\DK]} g_u \theta_u \quad \text{ where } \quad \theta_u(\tau, z, w) := \sum_{a \in u + \OK} \epx{\Nm{a} \tau + \cc{a} z + a w}.$$
Then $\phi_g \in \JF_{k,1}^*(N)$ and $\phi_g \mapsto g$ under the injective map $\JF_k^*(N) \rightarrow \MFDNp$ in \thref{prop:jacobi_to_elliptic}.

In particular, we have an isomorphism $\JF_k^*(N) \cong \MFDNp$.
\thlabel{thm:jacobi_forms_plus_form_isomorphism}
\end{thm}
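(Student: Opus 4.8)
The plan is to establish the forward statement — that $\phi_g$ belongs to $\JF_{k,1}^*(N)$ — after which the isomorphism drops out formally: $g\mapsto\phi_g$ is visibly $\C$-linear, and if $\phi_g\mapsto g$ under the injective map of \thref{prop:jacobi_to_elliptic}, then that map has a one-sided inverse and is therefore bijective.

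First I would dispose of the soft part. Writing $g_u(\tau)=\sum_{\ell\equiv -D\Nm{u}\bmod D,\ \ell\ge 0}\beta_g(\ell)\,\epx{\ell\tau/D}$ with $\beta_g(\ell):=\chi(N)\frac{-i\sD}{a_D(\ell)}a_\ell(g)$ (well-defined, since $a_D(\ell)=a_D(-D\Nm{u})\neq 0$ for every $\ell$ occurring here) depending only on $\ell$ and not on $u$, the absolutely convergent expansion of $\theta_u$ shows that $\phi_g$ is holomorphic on $\UHP\times\C^2$; that every exponent of $\tau$ occurring in it lies in $\Z_{\ge 0}$ (since $\ell\ge 0$ and $D\Nm{a}\equiv D\Nm{u}\equiv -\ell\bmod D$ for $a\in u+\OK$, so $\ell/D+\Nm{a}\in\Z_{\ge 0}$); and that the coefficient of $\epx{n\tau+\cc{t}z+tw}$ in $\phi_g$ equals $\beta_g(D(n-\Nm{t}))$, which depends only on $n-\Nm{t}$ and is non-zero only for $\Nm{t}\le n$. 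Hence $\phi_g$ is automatically \emph{special} and has a Fourier expansion of the shape required by condition (iii) of Definition \ref{defn:jacobi_forms} (with $\nu(I_2)=1$, the value $\nu(M)$ for general $M\in\SL{2}(\Z)$ being dictated by the level of the classical form $g$). Condition (ii), $\phi_g|_{k,1}[\lambda,\mu]=\phi_g$ for $\lambda,\mu\in\OK$, reduces to $\theta_u|_{1,1}[\lambda,\mu]=\theta_u$, the elementary shift-of-index identity using that $\Tr$ is integral on $\DK\cdot\OK$. So everything comes down to condition (i): $\phi_g|_{k,1}[\epsilon A]=\phi_g$ for all $A\in\Gm_0(N)$.

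By \thref{prop:explicit_arithmetic_criterion_jacobi_form}, condition (i) amounts to the purely arithmetic identity \eqref{eq:hjf_criterion} holding for all $\sigma\in\Gm_0(N)$ and all $v,w\in[\DK]$. By \thref{prop:reduction_to_Gamma(D)_classes}, the set of $\sigma\in\SL{2}(\Z)$ for which \eqref{eq:hjf_criterion} holds (for all $v,w$) is stable under right multiplication by $\Gm_0(D)$; as $\SL{2}(\Z)$ is a union of finitely many left $\Gm_0(D)$-cosets — of size $|\mathbb{P}^1(\Z/D)|$, independent of $N$ — it suffices to verify \eqref{eq:hjf_criterion} for one representative of each coset, whereupon the identity holds on all of $\SL{2}(\Z)$ and in particular on $\Gm_0(N)$. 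I would take these representatives to be $I_2$ (for which \eqref{eq:hjf_criterion} is the trivial identity noted in the proof of \thref{prop:reduction_to_Gamma(D)_classes}) together with matrices $\Matx{a & b\\ c & d}$ whose lower-left entry $c$ divides $D$, so that the invariants $\mu_\sigma(j),m_\sigma(j),n_\sigma(j)$ and the Gauss-sum arguments in \eqref{eq:hjf_criterion} stay as simple as possible.

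The crux is then the explicit evaluation of \eqref{eq:hjf_criterion} for these $\sigma$. One substitutes the Gauss-sum formula \eqref{eq:transformation_matrix_theta_series} for $M_{u,v}(\sigma)$, interchanges the several finite summations (over $u\in[\DK]$, over $j$, and over the internal variables of $M_{u,v}(\sigma)$ and of the Gauss sum $\GS(\psi_m;\cdot)$), and collects the resulting multiple exponential sum. Using $\gcd(D,N)=1$, the factorization $\chi_K=\prod_{p\dv D}\chi_p$, and the Chinese-Remainder splitting of exponential sums already used in the proof of \thref{prop:transformation_g_u}, the inner sums evaluate prime by prime to products of ordinary and Sali\'e-type Gauss sums; a case split according to whether $m=\mu$, $m=2\mu$, or $m=4\mu$ (the last only when $8\dv D$), together with the exact cancellation of the weights $\aDNu=a_D(-D\Nm{u})$ against the $M_{u,v}$-sums, collapses the left-hand side to $\delta_{D\Nm{w},D\Nm{v}}^{\bmod D}$. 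This exponential-sum bookkeeping — in essence Krieg's level-one computation for $\sigma=\Matx{0 & -1\\ 1 & 0}$, now run uniformly over a full set of $\Gm_0(D)$-coset representatives and pushed through the dyadic prime — is the main obstacle; the remainder is formal.

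Finally, granting $\phi_g\in\JF_{k,1}^*(N)$: the $u=0$ term of its theta decomposition is $g_0$ by construction, so the image of $\phi_g$ under the injection of \thref{prop:jacobi_to_elliptic} is $g_0|_{k-1}W_D$, whose $\ell$-th Fourier coefficient equals $i\frac{a_D(\ell)}{\sD}\chi(N)\,\a_{\phi_g}^*(\ell)=i\frac{a_D(\ell)}{\sD}\chi(N)\cdot\chi(N)\frac{-i\sD}{a_D(\ell)}a_\ell(g)=a_\ell(g)$ by $\chi(N)^2=1$; hence $\phi_g\mapsto g$. Therefore the injective $\C$-linear map $\JF_k^*(N)\to\MFDNp$ of \thref{prop:jacobi_to_elliptic} is surjective, and $\JF_k^*(N)\cong\MFDNp$.
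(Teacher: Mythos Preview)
Your proposal is correct and follows essentially the same route as the paper: reduce to the arithmetic criterion \eqref{eq:hjf_criterion} via \thref{prop:explicit_arithmetic_criterion_jacobi_form}, then to a finite set of $\Gm_0(D)$-coset representatives with $c\mid D$ via \thref{prop:reduction_to_Gamma(D)_classes}, and finally carry out the explicit Gauss/Sali\'e-sum evaluation prime by prime with a separate treatment of the dyadic place. The paper organizes the crux computation by splitting first on the parity of $D$ and then (for even $D$) on $f=\val_2(c)$ rather than on the trichotomy $m=\mu,2\mu,4\mu$ you name, but this is a bookkeeping difference rather than a mathematical one; your added discussion of the ``soft'' conditions (holomorphy, specialness, the lattice condition (ii), and the cuspidal-type expansion (iii)) is a useful complement to what the paper leaves implicit.
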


By the results of previous section, we only need to show the equation \eqref{eq:hjf_criterion} holds for all representatives $\sigma \in \Gamma_0(1) = \SL{2}(\Z)$ of $\Gamma_0(1)/\Gamma_0(D)$ and all $v, w \in [\DK]$. (This is stronger than the criterion in \thref{prop:explicit_arithmetic_criterion_jacobi_form}.)
From the well--known representatives for $\Gm_0(1)/\Gm_0(D)$ and the fact that the class of the identity matrix is already settled in \thref{prop:reduction_to_Gamma(D)_classes}, we reduce the verification to that of $\sigma = \Mabcd$ with $c \dv D$ and $c > 0$.
Fix such an arbitrary $\sigma$ and two arbitrary classes $v, w \in [\DK]$. Let
\begin{align*}
A_u &:= \sum_{\underset{\gcd(D\Nm{w}, m) = \mu}{j \bmod D}} \frac{a_w}{a_u} R_\sigma(w, j) \; \GS(\psi_m; nc) \; \psi_n(a + cj) \; \epx{\Nm{u} j - \Nm{w} \kappa}
\end{align*}
and
\begin{align*}
A &:= \sum_{u \in [\DK]} \frac{M_{u,v}(\sigma)}{D} A_u.
\end{align*}
be a factor of the inner summand and the left hand side (respectively) of \eqref{eq:hjf_criterion}. Under our assumption on the lower left entry $c$ of $\sigma$, we shall evaluate $A_u$ and then $A$ explicitly to establish \eqref{eq:hjf_criterion}, namely to show that $A = \delta_{D\Nm{w}, D\Nm{v}}^{\bmod D}$.

For clarity and due to the fact that the complete set of representatives of $[\DK]$ are of different forms, we consider the two cases of odd and even discriminant $D$ separately. The proof strategy is pretty much the same; except for the fact that the even discriminant case is technically much more complicated.

In this section, to simplify our formula, we recycle the previous notation $\epx{\bullet}$ as follow: For any integer $M \not= 0$ and any rational number
$r \in \Z_{(M)}$ where
$$\Z_{(M)} := \braces{\frac{a}{b} \in \Q \suchthat a, b \in \Z, \gcd(a, b) = 1 \tand \gcd(b, M) = 1},$$
the notation $\epx{\frac{r}{M}}$ will always mean the root of unity $\epx{\frac{s}{M}} = e^{2\pi i s / M}$ where $s$ is any integer such that $r \equiv s \bmod M \Z_{(M)}$. For example, if $r$ is odd, then $\epx{\frac{\frac{1}{r}}{2^e}} = \epx{\frac{s}{2^e}}$ where $s$ is a multiplicative inverse of $r$ mod $2^e$ i.e. $s$ is any integer such that $s r \equiv 1 \bmod 2^e$. This way, we do not have to write the longer notation $\epx{\frac{r^{-1} \bmod 2^e}{2^e}}$ or resort to defining a lot of new symbol for multiplicative inverse.

\subsection{The odd discriminant case}

\let\ifAllProof\iffalse\let\ifFullProof\iftrue\let\ifPersonalNote\iffalse

\unless\ifdefined\IsMainDocument

\documentclass[12pt]{amsart}
\usepackage{amsmath,amssymb,iftex,cancel,xy,xcolor,verbatim}
\usepackage{theoremref}
\usepackage[normalem]{ulem}
\usepackage[margin=1in]{geometry}

\xyoption{all}

\newtheorem{thm}{Theorem}[section]
\newtheorem{lm}[thm]{Lemma}
\newtheorem{prop}[thm]{Proposition}
\newtheorem{cor}[thm]{Corollary}
\theoremstyle{definition}
\newtheorem{defn}[thm]{Definition}
\newtheorem{remark}[thm]{Remark}
\newtheorem{example}[thm]{Example}

\numberwithin{equation}{section}

\title{Hermitian Maass lift for General Level}
\date{}
\author{Lawrence (An Hoa) Vu}

\ifPersonalNote
	\ifPDFTeX
		\usepackage{mdframed}
		\newmdtheoremenv[
		    linewidth=2pt,
		    leftmargin=0pt,
		    innerleftmargin=0.4em,
		    rightmargin=0pt,
		    innerrightmargin=0.4em,
		    innertopmargin=-5pt,
		    innerbottommargin=3pt,
		    splittopskip=\topskip,
		    splitbottomskip=0.3\topskip,
		    skipabove=0.6\topsep
		]{pnote}{Personal note}
	\else
		\definecolor{OliveGreen}{rgb}{0,0.6,0}
		\newcounter{personaln}
		\newenvironment{pnote}{\color{OliveGreen}
		\stepcounter{personaln}
		\par\bigskip\noindent{\bfseries Personal note \arabic{personaln}:}
		\par\medskip}{\par\medskip}
	\fi
\else	
	\let\pnote\comment
	\let\endpnote\endcomment
\fi

\allowdisplaybreaks

\ifPDFTeX
\else
	\usepackage{yfonts}
	\renewcommand{\mathfrak}{\textfrak}
\fi

\newcommand{\LHS}{\text{LHS}}

\newcommand{\UHP}{\mathfrak{H}}
\newcommand{\C}{\mathbb{C}}
\newcommand{\R}{\mathbb{R}}
\newcommand{\Q}{\mathbb{Q}}
\newcommand{\Z}{\mathbb{Z}}

\newcommand{\term}[1]{\emph{#1}}

\newcommand{\Maass}{Maa\ss\ }

\newcommand{\Mat}[1]{M_{#1}}
\newcommand{\GGm}{\mathsf{G_m}}
\newcommand{\GL}[1]{\mathsf{GL}_{#1}}
\newcommand{\SL}[1]{\mathsf{SL}_{#1}}
\newcommand{\GU}[1]{\mathsf{GU}(#1,#1)}
\newcommand{\U}[1]{\mathsf{U}(#1,#1)}
\newcommand{\SU}[1]{\mathsf{SU}(#1,#1)}
\newcommand{\HerMat}[1]{\mathbb{S}_{#1}} \newcommand{\Res}{\text{Res}} 

\newcommand{\GS}{\mathbf{G}} \newcommand{\aDNu}{a_u} \newcommand{\aDNw}{a_w} 

\newcommand{\MF}{\mathfrak{M}}
\newcommand{\CF}{\mathfrak{S}}
\newcommand{\JF}{\mathfrak{J}}
\newcommand{\MFDp}{\MF_{k-1}^+(D, \chi)}
\newcommand{\MFDNp}{\MF_{k-1}^+(DN, \chi)}
\newcommand{\MFDN}{\MF_{k-1}(DN, \chi)}

\newcommand{\Matx}[1]{\begin{pmatrix} #1 \end{pmatrix}}

\newcommand{\ConjTran}[1]{#1^*} \newcommand{\ConjTranInv}[1]{\widehat{#1}} 

\newcommand{\epx}[1]{\mathsf{e}\left[#1\right]}
\newcommand{\sign}[1]{\mathsf{sign}\left[#1\right]}
\newcommand{\cc}[1]{\overline{#1}}

\newcommand{\OK}{\mathfrak{o}_K} \newcommand{\DK}{\mathfrak{d}_K} \newcommand{\Mabcd}{\Matx{a & b\\c & d}}
\newcommand{\Mxyzt}{\Matx{x & y\\z & t}}
\newcommand{\val}{\mathsf{val}}

\newcommand{\parens}[1]{\left( #1 \right)}
\newcommand{\braces}[1]{\left\{ #1 \right\}}
\newcommand{\suchthat}{\;\vline\;}
\newcommand{\tif}{\text{ if }}
\newcommand{\tand}{\text{ and }}
\newcommand{\twhere}{\text{ where }}
\newcommand{\totherwise}{\text{ otherwise }}
\newcommand{\tsince}{\text{ since }}
\newcommand{\teither}{\text{ either }}
\newcommand{\tforsome}{\text{ for some }}
\newcommand{\tor}{\text{ or }}

\newcommand{\Gm}{\Gamma}
\renewcommand{\a}{\alpha}
\renewcommand{\b}{\beta}
\newcommand{\gm}{\gamma}
\newcommand{\ld}{\lambda}
\newcommand{\eps}{\varepsilon}
\renewcommand{\theta}{\vartheta}
\renewcommand{\phi}{\varphi}

\newcommand{\dv}{\;|\;}
\newcommand{\ndv}{\;\nmid\;}
\newcommand{\smD}{\sqrt{-D}}
\newcommand{\sD}{\sqrt{D}}
\renewcommand{\gcd}{}
\newcommand{\jcb}[2]{(#1 \;|\; #2)}
\newcommand{\Tr}{\mathsf{Tr}}
\newcommand{\Nm}[1]{|#1|^2}  \newcommand{\Id}{\text{Id}}
\newcommand{\Gal}{\text{Gal}}

\renewcommand{\labelenumi}{(\roman{enumi})}

\begin{document}

\fi 
Let us consider the case where the discriminant $-D$ is odd. In this case, $D \equiv 3 \bmod 4$ and we also know that $D$ is square-free. The representatives for $[\DK]$ can be taken from
$$\braces{\frac{i x}{\sD} \suchthat x \in \Z/D\Z}.$$
Here, $x \in \Z/D\Z$ refers to the choice of $x \in \Z$ from any complete sets of representatives of $\Z/D\Z$. Note that $D\Nm{u} \bmod D$ is independent of the representative for $u$.

We record the following lemma which is a slight generalization of Lemma A in \cite{Krieg1991} Section 6.

\begin{lm}
Given odd prime $p$ and $x, y, z \in \Z$, $p \nmid z$, let $\psi = (* | p)$ be the Legendre symbol for the prime $p$. Then one has
$$\sum_{j = 1}^{p - 1} \psi(j) \epx{z \frac{j x^2 + j^{-1} y^2}{p}} = \GS(\psi; z) \frac{\psi(x^2) + \psi(y^2)}{1 + \psi(y^2)} \sum_{\underset{\gm^2 \equiv y^2 \bmod p}{\gm \bmod p}} \epx{\frac{2xz\gm}{p}}$$
\thlabel{lm:gauss_salie_sum}
\end{lm}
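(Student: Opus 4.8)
The plan is to evaluate the sum $S := \sum_{j=1}^{p-1}\psi(j)\,\epx{z\frac{jx^2 + j^{-1}y^2}{p}}$ by the standard device of completing the square, treating the cases $p\mid y$ and $p\nmid y$ separately, and then recognizing the answer as a Gauss sum times the indicated normalization factor. The key observation is that for fixed $j$ the exponent is a quadratic form in a hidden variable, and the classical way to linearize it is to substitute $j\mapsto j t^2$ for a suitable $t$; concretely, if $p\nmid y$ I would replace $j$ by $j y^2$ (which is a bijection of $(\Z/p)^\times$), turning $jx^2 + j^{-1}y^2$ into $jx^2y^2 + j^{-1}$, and then pull out $\psi(y^2)=1$ so that $\psi(jy^2)=\psi(j)$. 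One is then reduced to sums of the shape $\sum_{j}\psi(j)\,\epx{z\frac{jw + j^{-1}}{p}}$, the classical \emph{Sali\'e sum}, which has the well-known closed form $\GS(\psi;z)\sum_{\gamma^2\equiv w}\epx{2z\gamma\cdot(\text{something})/p}$; keeping careful track of the scaling $w = x^2 y^2$ recovers exactly the right-hand side, including the sum over $\gamma$ with $\gamma^2\equiv y^2\bmod p$ and the factor $\epx{2xz\gamma/p}$.

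In more detail, the main computation I would carry out is the Sali\'e sum evaluation itself (in case $p\nmid y$): starting from $\sum_{j\in(\Z/p)^\times}\psi(j)\,\epx{z\frac{jW + j^{-1}}{p}}$ with $W=x^2y^2$, I would write $W \equiv \gamma_0^2$ for some $\gamma_0$ when $W$ is a square (i.e. when $\psi(W)=\psi(x^2)=1$, noting $\psi(y^2)=1$ already), substitute $j\mapsto \gamma_0^{-1} j$ to get $\sum_j\psi(\gamma_0^{-1}j)\epx{z\gamma_0\frac{j+j^{-1}}{p}} = \psi(\gamma_0)^{-1}\sum_j\psi(j)\epx{z\gamma_0\frac{j+j^{-1}}{p}}$, and then use the textbook identity $\sum_{j\in(\Z/p)^\times}\psi(j)\,\epx{a\frac{j+j^{-1}}{p}} = \GS(\psi)\big(\epx{2a/p}+\epx{-2a/p}\big)$ valid for $p\nmid a$ (this is proved by opening $j+j^{-1}=c$, counting solutions, and a short Jacobi-sum manipulation). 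Collecting, $S = \GS(\psi)\psi(\gamma_0)^{-1}\big(\epx{2z\gamma_0/p}+\epx{-2z\gamma_0/p}\big)$; since $\gamma_0^2 = x^2y^2$ the two values $\pm\gamma_0$ are exactly the $\gamma$ with $\gamma^2\equiv x^2y^2$, and writing $\gamma = x\tilde\gamma$ with $\tilde\gamma^2\equiv y^2$ turns this into $\GS(\psi;z)\sum_{\tilde\gamma^2\equiv y^2}\epx{2xz\tilde\gamma/p}$ up to the units, which matches $\GS(\psi;z)\frac{\psi(x^2)+\psi(y^2)}{1+\psi(y^2)}\sum_{\tilde\gamma^2\equiv y^2}\epx{2xz\tilde\gamma/p}$ because $\psi(y^2)=1$ makes the prefactor equal to $\tfrac{1+\psi(x^2)}{2}$, which is $1$ when $\psi(x^2)=1$ and $0$ when $p\mid x$ — and when $p\mid x$ both sides vanish directly (the inner exponential is $1$ and one checks the Sali\'e sum is $0$, or simply $\psi(x^2)=0$ kills the prefactor). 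If $W$ is a nonzero non-square the factor $\psi(x^2)+\psi(y^2)=1+(-1)=0$, and correspondingly the Sali\'e sum vanishes, so both sides agree.

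The remaining case $p\mid y$: then $j^{-1}y^2\equiv 0$, so $S = \sum_{j=1}^{p-1}\psi(j)\,\epx{zjx^2/p} = \GS(\psi; zx^2) = \psi(zx^2)^{-1}\GS(\psi)\cdot(\text{stuff})$ — more precisely $\GS(\psi;b) = \psi(b)\GS(\psi)$ for $p\nmid b$ and $\GS(\psi;b)=0$ for $p\mid b$, so $S = \psi(x^2)\GS(\psi;z)$ when $p\nmid x$ and $S=0$ when $p\mid x$. On the right-hand side, $\psi(y^2)=0$, the prefactor becomes $\frac{\psi(x^2)+0}{1+0}=\psi(x^2)$, and the sum $\sum_{\gamma^2\equiv y^2\equiv 0}\epx{2xz\gamma/p}$ has the single term $\gamma\equiv 0$ contributing $1$; hence the right-hand side equals $\psi(x^2)\GS(\psi;z)$, matching. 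I expect the main obstacle to be purely bookkeeping: keeping the quadratic-residue prefactor $\frac{\psi(x^2)+\psi(y^2)}{1+\psi(y^2)}$ consistent across the sub-cases $p\mid x$, $p\mid y$, and the square/non-square alternatives — in particular making sure the degenerate conventions $\psi(0)=0$ and the empty-vs-singleton behavior of $\sum_{\gamma^2\equiv y^2}$ line up — rather than any deep difficulty; the underlying Sali\'e sum identity is classical.
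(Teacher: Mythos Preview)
Your approach is the same as the paper's: split into cases by whether $p$ divides $x$ or $y$, handle the degenerate cases as Gauss sums, and in the generic case $p\nmid xy$ invoke the classical Sali\'e sum evaluation. The paper cites the formula $S(c;c)=\GS(\psi;c)\bigl(\epx{2c/p}+\epx{-2c/p}\bigr)$; note that your stated identity $\sum_j\psi(j)\,\epx{a(j+j^{-1})/p}=\GS(\psi)\bigl(\epx{2a/p}+\epx{-2a/p}\bigr)$ is missing a factor of $\psi(a)$, and without it you end up with $\psi(\gamma_0)\GS(\psi)$ rather than $\GS(\psi;z)$.

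There is one genuine slip in your sub-case $p\mid x$, $p\nmid y$: the prefactor $\tfrac{1+\psi(x^2)}{2}$ equals $\tfrac12$, not $0$, and the reduced sum $\sum_j\psi(j)\,\epx{zj^{-1}/p}$ is $\GS(\psi;z)$, not zero. Both sides actually equal $\GS(\psi;z)$ here, since on the right the sum over $\gamma^2\equiv y^2$ has two terms $\gamma=\pm y$, each contributing $\epx{0}=1$, so $\GS(\psi;z)\cdot\tfrac12\cdot 2=\GS(\psi;z)$. (Also, your ``nonzero non-square $W$'' case is vacuous: $W=x^2y^2$ is always a square or zero, and $\psi(y^2)$ is never $-1$.) These are precisely the bookkeeping hazards you flagged; once corrected, the argument is complete and matches the paper's.
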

\ifAllProof
\begin{proof}
If $p \dv x, y$ then the left hand side is just $\sum_{j = 1}^{p - 1} (j | p) = 0$ and $\psi(x^2) + \psi(y^2) = 0$ on the right hand side.

When $p \dv x$ but $p \nmid y$, we can remove $j x^2$ and by a change of variable $t = j^{-1} y^2 z$, we get Gauss sum:
\begin{align*}
\sum_{j = 1}^{p - 1} \psi(j) \epx{z \frac{j^{-1} y^2}{p}} &= \sum_{t = 1}^{p - 1} \psi(t y^{-2} z^{-1}) \epx{\frac{t}{p}}\\
&= \psi(z) \sum_{t = 1}^{p - 1} \psi(t) \epx{\frac{t}{p}}\\
&= \GS(\psi; z)
\end{align*}
One can check that the remaining factor on the RHS is 1. Likewise for the case $p \dv y$ but $p \nmid x$.

For the remaining case $p \nmid xy$, we get a Salie sum $S(zx^2; zy^2)$ where
\begin{align*}
S(a;b) &= \sum_{j = 1}^{p - 1} (j | p) \epx{\frac{a j + b j^{-1}}{p}}\\
&= \begin{cases}
0 &\text{if } (ab|p) = -1\\
(ac|p) S(c;c) &\text{if } (ab|p) = 1 \tand ab = c^2 \bmod p\\
\end{cases}\\
S(c;c) &= (c|p) i^{((p-1)/2)^2} p^{1/2} (\epx{2c} + \epx{-2c})\\
&= \GS((*|p); c) \parens{\epx{\frac{2c}{p}} + \epx{\frac{-2c}{p}}}
\end{align*}
explicitly evaluated according to \cite{kwilliam}. Thus, we get $S(zx^2; zy^2) = (zx^2 xyz|p) \GS((*|p); xyz) (\epx{2xyz/p} + \epx{-2xyz/p}) = \GS(\psi; z) \parens{\epx{\frac{2xyz}{p}} + \epx{\frac{-2xyz}{p}}}$. The remaining factor matches the right hand side.
\end{proof}
\fi

Set $D^* = \frac{D}{c}$. We proceed to evaluate the left hand side of \eqref{eq:hjf_criterion}. Under our assumption that $c | D$ and $c > 0$, it is easy to evaluate the matrix explicitly
\begin{align}
M_{u,v}(\sigma) = \frac{-i}{\sD} \epx{\frac{a x^2 - 2xy + d y^2}{Dc}} \; \GS(\psi_c; a) \; \delta_{x,dy}^{\bmod c}
\label{eq:matrix_M_for_c_divides_D}
\end{align}
if $u, v \in [\DK]$ are represented by $u = [\frac{ix}{\sD}], v = [\frac{iy}{\sD}]$ for some $x, y \in \Z$.

\ifAllProof
\begin{proof}
Note that $u, v$ are purely imaginary so that $\cc{u} = -u, \cc{v} = -v$. Also, $D\Nm{u} = x^2$ and $D\Nm{v} = y^2$. We first simplify the summand in the definition of $M_{u,v}(\sigma)$ as
\begin{align*}
&\epx{\frac{1}{c}(a\Nm{\gm+u} - (\gm+u)\cc{v} - \cc{\gm+u} v + d\Nm{v})}\\
&= \epx{\frac{1}{c}(a\Nm{\gm} + a \cc{\gm} u + a \gm \cc{u} + a \Nm{u} - \gm \cc{v} - u \cc{v} - \cc{\gm} v - \cc{u} v + d \Nm{v})}\\
&= \epx{\frac{1}{c}(a\Nm{\gm} + a \cc{\gm} u - a \gm u + a \Nm{u} + \gm v + u v - \cc{\gm} v + u v + d\Nm{v})}\\
&= \epx{\frac{1}{c}(a\Nm{\gm} - (au - v) (\gm - \cc{\gm}) + a \Nm{u} + 2 u v + d\Nm{v})}\\
&= \epx{\frac{1}{c}(a \Nm{u} + 2 u v + d\Nm{v})} \epx{\frac{a\Nm{\gm} - (au - v) (\gm - \cc{\gm})}{c}}
\end{align*}
Now let $\omega = \frac{1 + i\sD}{2}$ and identify $\OK = \Z[\omega]$, we get
\begin{align*}
&\sum_{\gm \in \OK/c\OK} \epx{\frac{a\Nm{\gm} - (a u - v) (\gm - \cc{\gm})}{c}}\\
&= \sum_{\a,\b \in \Z/c} \epx{\frac{a(\a^2 + \a \b + \frac{D + 1}{4} \b^2) - (a \frac{ix}{\sD} - \frac{iy}{\sD}) (i\b\sD)}{c}} &\text{ change of variable } \gm = \a + \b \omega\\
&= \sum_{\a,\b \in \Z/c} \epx{\frac{a(\a^2 + \a \b + \frac{D + 1}{4} \b^2) + (a x - y) \b}{c}}\\
&= \sum_{\a, \b \in \Z/c} \epx{\frac{a(\a^2 + 2 \a \b + (D + 1) \b^2) + 2 (a x - y) \b}{c}} &\text{ since } 2 \nmid c, \text{ we can make change of variable } \b \text{ to } 2\b\\
&= \sum_{\b \in \Z/c} \sum_{\a \in \Z/c} \epx{\frac{a (\a + \b)^2 + 2 (a x - y) \b}{c}} &\text{ for } c \dv D\\
&= \sum_{\a, \b \in \Z/c} \epx{\frac{a \a^2 + 2 (a x - y) \b}{c}} &\text{ change of variable } \a \text{ for } \a + \b\\
&= \parens{\sum_{\a \in \Z/c} \epx{\frac{a \a^2}{c}}} \underbrace{\parens{\sum_{\b \in \Z/c} \epx{\frac{2 (a x - y) \b}{c}}}}_{c \; \delta_{ax,y}^{\bmod c}}
\end{align*}
By Chinese Remainder Theorem,
\begin{align*}
\sum_{\a \in \Z/c} \epx{\frac{a \a^2}{c}} &= \prod_{p \dv c} \parens{\sum_{\a \in \Z/p} \epx{\frac{a ((\frac{c}{p})^{-1} \bmod p) \a^2}{p}}}\\
&= \prod_{p \dv c} \GS\parens{\psi_p; a ((\frac{c}{p})^{-1} \bmod p)}\\
&= \GS(\psi_c; a)
\end{align*}
and thus
\begin{align*}
M_{u,v}(\sigma) &= \frac{-i}{c \sD} \epx{\frac{1}{c}(a\Nm{u} + 2uv + d\Nm{v})} \sum(...)\\
&= \frac{-i}{c \sD} \epx{\frac{a D\Nm{u} + 2 Du v + d D\Nm{v}}{Dc}} \; \GS(\psi_c; a) \; c \; \delta_{ax,y}^{\bmod c}\\
&= \frac{-i}{\sD} \epx{\frac{a x^2 - 2xy + d y^2}{Dc}} \; \GS(\psi_c; a) \; \delta_{x,dy}^{\bmod c}
\end{align*}

Note that we find in particular that if $c = D$ then $M_{u,v}(\sigma) = \frac{-i}{\sD} \epx{\frac{a x^2 - 2xy + d y^2}{D^2}} \; \GS(\psi_D; a) \; \delta_{x,dy}^{\bmod D} = \delta_{u,dv} \epx{ab\Nm{u}} \chi(d)$. \end{proof}
\fi

Choose any $h \in \Z$ such that $h \equiv \begin{cases}
b &\mod c\\
c^{-1} &\mod D^*
\end{cases}$
and for any $u \in [\DK]$, we set
$$F_u := \sum_{\underset{\gm^2 \equiv D\Nm{w} \bmod D^*}{\gm \bmod D^*}} \epx{\frac{2x h^2 \gm}{D^*}} \qquad \text{ if } u = \left[\frac{i x}{\sD}\right].$$
(The right hand side is the same for any $x \in \Z$ as long as $x^2 \equiv D\Nm{u} \bmod D$. Also, recall that we fix $w \in [\DK]$.)

Then using \thref{lm:gauss_salie_sum}, we find that for any fixed $u \in [\DK]$, the inner sum over $j$ on the left hand side of \eqref{eq:hjf_criterion} equals
\begin{align}
A_u &= c\; \delta_{D\Nm{u}, D\Nm{dw}}^{\bmod c}\; F_{u, w} \GS(\psi_{D^*}) \psi_{c}(a) \epx{-\Nm{w} dh - \frac{D\Nm{u}ah^2}{D^*}}.
\label{eq:D_odd_inner_sum_over_j}
\end{align}

Take representative $v = [\frac{iy}{\sD}] \in [\OK]$. By mean of \eqref{eq:matrix_M_for_c_divides_D} and \eqref{eq:D_odd_inner_sum_over_j} and change of variable from $u = [\frac{ix}{\sD}]$ to $x \bmod D$, one has
\begin{align*}
A &= \sum_{\underset{x^2 \equiv D\Nm{dw} \bmod c}{x \bmod D}} \frac{\frac{-i}{\sD} \epx{\frac{a x^2 - 2xy + d y^2}{Dc}} \; \GS(\psi_c; a) \; \delta_{x,dy}^{\bmod c}}{D} c F_{[\frac{ix}{\sD}]} \GS(\psi_{D^*}) \psi_{c}(a) \epx{-\Nm{w}dh - \frac{x^2 a h^2}{D^*}}\\
&= \sum_{\underset{\underset{x^2 \equiv D\Nm{dw} \bmod c}{x \equiv dy \bmod c}}{x \bmod D}} \frac{1}{D^*} \epx{\frac{a x^2 - 2xy + d y^2}{Dc} -\Nm{w}dh - \frac{x^2 a h^2}{D^*}} F_{[\frac{ix}{\sD}]}
\end{align*}
Here, note that $\GS(\psi_c) \GS(\psi_{D^*}) = i \sD$ since $\psi_c(-1) \psi_{D^*}(-1) = \chi(-1) = -1$.

Evidently, if $D\Nm{v} \equiv d^2y^2 \not\equiv D\Nm{dw} \bmod c$ then the last sum is an empty sum (there is no $x$ satisfying the two conditions) and thus is zero which is the same as $\delta_{D\Nm{v}, D\Nm{w}}^{\bmod D}$.

Let us consider the case $D\Nm{v} \equiv D\Nm{dw} \bmod c$ and the sum reduces to just sum over $x \bmod D$ such that $x \equiv dy \bmod c$. Making change of summation variable to  $x \bmod D^*$ by identifying $x$ with $cx + dy$ and we continue with
\begin{align*}
A &= \sum_{x \bmod D^*} \frac{1}{D^*} \epx{\frac{a (cx+dy)^2 - 2(cx+dy)y + d y^2}{Dc} - \Nm{w}dh - \frac{(cx+dy)^2 a h^2}{D^*}} F_{[\frac{i(cx+dy)}{\sD}]}\\
&= \sum_{x \bmod D^*} \frac{1}{D^*} \epx{\frac{a (cx+dy)^2 - 2(cx+dy)y + d y^2}{Dc} -\Nm{w}dh - \frac{(cx+dy)^2 a h^2}{D^*}} \\
&\hspace{9cm} \times \parens{\sum_{\underset{\gm^2 \equiv D\Nm{w} \bmod D^*}{\gm \bmod D^*}} \epx{\frac{2(cx+dy) h^2 \gm}{D^*}}}\\
&= \sum_{\underset{\gm^2 \equiv D\Nm{w} \bmod D^*}{\gm \bmod D^*}} \epx{-\frac{D\Nm{w} dh}{D} + \frac{bdy^2}{D} + \frac{2dyh^2 \gm - a d^2y^2 h^2}{D^*}} \frac{1}{D^*} \sum_{x \bmod D^*} \epx{(2by + 2 h \gm - 2dyah) \frac{x}{D^*}}
\end{align*}
The inner sum over $x \bmod D^*$ is either $D^*$ or $0$ depending on whether $D^* \dv 2by + 2 h \gm - 2dyah$ or not. Note that $D^* \dv 2by + 2 h \gm - 2dyah$ is equivalent to $\gm \equiv y \bmod D^*$.
And so the outer sum over $\gm$ is empty i.e. $A = 0$ if $y^2 = D\Nm{v} \not\equiv D\Nm{w} \bmod D^*$. We have thus shown that $A = 0$ if $D\Nm{v} \not\equiv D\Nm{w} \bmod D$ i.e. if either $D\Nm{v} \not\equiv D\Nm{w} \bmod D^*$ or $D\Nm{v} \not\equiv D\Nm{w} \bmod c$; which matches the right hand side $\delta_{D\Nm{w}, D\Nm{v}}^{\bmod D}$ of \eqref{eq:hjf_criterion} by Chinese Remainder Theorem.

In case $D\Nm{v} \equiv D\Nm{w} \bmod D$, the sum over $\gm$ is just the summand at $\gm = y$ i.e.
\begin{align*}
A &= \epx{-\frac{D\Nm{w} dh}{D} + \frac{bdy^2}{D} + \frac{2dyh^2y - a d^2y^2 h^2}{D^*}}\\
&= \epx{\frac{-hd + bd + 2 cdh^2 - a c d^2 h^2}{D} y^2}\\
&= 1
\end{align*}
because $-hd + bd + 2 cdh^2 - a c d^2 h^2 \equiv 0 \bmod D$ by definition of $h$. It is obvious mod $c$ since $h \equiv b \bmod c$. We have $c(-hd + bd + 2 cdh^2 - a c d^2 h^2) \equiv -d + cbd + 2d - ad^2 \equiv d - (ad - bc)d \equiv 0 \bmod D^*$ and so $-hd + bd + 2 cdh^2 - a c d^2 h^2 \equiv 0 \bmod D^*$ as well.

This concludes our verification of \eqref{eq:hjf_criterion}; thus establishing the lifting when $D$ is odd.

\subsection{The even discriminant case}

\let\ifAllProof\iffalse\let\ifFullProof\iftrue\let\ifPersonalNote\iffalse

\unless\ifdefined\IsMainDocument

\documentclass[12pt]{amsart}
\usepackage{amsmath,amssymb,iftex,cancel,xy,xcolor,verbatim}
\usepackage{theoremref}
\usepackage[normalem]{ulem}
\usepackage[margin=1in]{geometry}

\xyoption{all}

\newtheorem{thm}{Theorem}[section]
\newtheorem{lm}[thm]{Lemma}
\newtheorem{prop}[thm]{Proposition}
\newtheorem{cor}[thm]{Corollary}
\theoremstyle{definition}
\newtheorem{defn}[thm]{Definition}
\newtheorem{remark}[thm]{Remark}
\newtheorem{example}[thm]{Example}

\numberwithin{equation}{section}

\title{Hermitian Maass lift for General Level}
\date{}
\author{Lawrence (An Hoa) Vu}

\ifPersonalNote
	\ifPDFTeX
		\usepackage{mdframed}
		\newmdtheoremenv[
		    linewidth=2pt,
		    leftmargin=0pt,
		    innerleftmargin=0.4em,
		    rightmargin=0pt,
		    innerrightmargin=0.4em,
		    innertopmargin=-5pt,
		    innerbottommargin=3pt,
		    splittopskip=\topskip,
		    splitbottomskip=0.3\topskip,
		    skipabove=0.6\topsep
		]{pnote}{Personal note}
	\else
		\definecolor{OliveGreen}{rgb}{0,0.6,0}
		\newcounter{personaln}
		\newenvironment{pnote}{\color{OliveGreen}
		\stepcounter{personaln}
		\par\bigskip\noindent{\bfseries Personal note \arabic{personaln}:}
		\par\medskip}{\par\medskip}
	\fi
\else	
	\let\pnote\comment
	\let\endpnote\endcomment
\fi

\allowdisplaybreaks

\ifPDFTeX
\else
	\usepackage{yfonts}
	\renewcommand{\mathfrak}{\textfrak}
\fi

\newcommand{\LHS}{\text{LHS}}

\newcommand{\UHP}{\mathfrak{H}}
\newcommand{\C}{\mathbb{C}}
\newcommand{\R}{\mathbb{R}}
\newcommand{\Q}{\mathbb{Q}}
\newcommand{\Z}{\mathbb{Z}}

\newcommand{\term}[1]{\emph{#1}}

\newcommand{\Maass}{Maa\ss\ }

\newcommand{\Mat}[1]{M_{#1}}
\newcommand{\GGm}{\mathsf{G_m}}
\newcommand{\GL}[1]{\mathsf{GL}_{#1}}
\newcommand{\SL}[1]{\mathsf{SL}_{#1}}
\newcommand{\GU}[1]{\mathsf{GU}(#1,#1)}
\newcommand{\U}[1]{\mathsf{U}(#1,#1)}
\newcommand{\SU}[1]{\mathsf{SU}(#1,#1)}
\newcommand{\HerMat}[1]{\mathbb{S}_{#1}} \newcommand{\Res}{\text{Res}} 

\newcommand{\GS}{\mathbf{G}} \newcommand{\aDNu}{a_u} \newcommand{\aDNw}{a_w} 

\newcommand{\MF}{\mathfrak{M}}
\newcommand{\CF}{\mathfrak{S}}
\newcommand{\JF}{\mathfrak{J}}
\newcommand{\MFDp}{\MF_{k-1}^+(D, \chi)}
\newcommand{\MFDNp}{\MF_{k-1}^+(DN, \chi)}
\newcommand{\MFDN}{\MF_{k-1}(DN, \chi)}

\newcommand{\Matx}[1]{\begin{pmatrix} #1 \end{pmatrix}}

\newcommand{\ConjTran}[1]{#1^*} \newcommand{\ConjTranInv}[1]{\widehat{#1}} 

\newcommand{\epx}[1]{\mathsf{e}\left[#1\right]}
\newcommand{\sign}[1]{\mathsf{sign}\left[#1\right]}
\newcommand{\cc}[1]{\overline{#1}}

\newcommand{\OK}{\mathfrak{o}_K} \newcommand{\DK}{\mathfrak{d}_K} \newcommand{\Mabcd}{\Matx{a & b\\c & d}}
\newcommand{\Mxyzt}{\Matx{x & y\\z & t}}
\newcommand{\val}{\mathsf{val}}

\newcommand{\parens}[1]{\left( #1 \right)}
\newcommand{\braces}[1]{\left\{ #1 \right\}}
\newcommand{\suchthat}{\;\vline\;}
\newcommand{\tif}{\text{ if }}
\newcommand{\tand}{\text{ and }}
\newcommand{\twhere}{\text{ where }}
\newcommand{\totherwise}{\text{ otherwise }}
\newcommand{\tsince}{\text{ since }}
\newcommand{\teither}{\text{ either }}
\newcommand{\tforsome}{\text{ for some }}
\newcommand{\tor}{\text{ or }}

\newcommand{\Gm}{\Gamma}
\renewcommand{\a}{\alpha}
\renewcommand{\b}{\beta}
\newcommand{\gm}{\gamma}
\newcommand{\ld}{\lambda}
\newcommand{\eps}{\varepsilon}
\renewcommand{\theta}{\vartheta}
\renewcommand{\phi}{\varphi}

\newcommand{\dv}{\;|\;}
\newcommand{\ndv}{\;\nmid\;}
\newcommand{\smD}{\sqrt{-D}}
\newcommand{\sD}{\sqrt{D}}
\renewcommand{\gcd}{}
\newcommand{\jcb}[2]{(#1 \;|\; #2)}
\newcommand{\Tr}{\mathsf{Tr}}
\newcommand{\Nm}[1]{|#1|^2}  \newcommand{\Id}{\text{Id}}
\newcommand{\Gal}{\text{Gal}}

\renewcommand{\labelenumi}{(\roman{enumi})}

\begin{document}

\fi 
Now we move on to the even discriminant case. In this case the representatives for $[\DK]$ can be taken from
$$\braces{
\frac{x_1}{2} + \frac{ix_2}{\sD} \suchthat
x_1 \in \Z/2\Z \tand
x_2 \in \Z/\parens{\frac{D}{2}}\Z
}$$

Decompose $D = 2^e D', c = 2^f c'$ where $e = \val_2(D), f = \val_2(c)$ are 2-adic valuation of $D$ and $c$ respectively; in other words, $2 \ndv c'$ and $2 \ndv D'$. Let $c^*$ be the $c$ component of $D$ i.e. $c^* = 2^e c'$ if $f \geq 1$ and $c^* = c = c'$ if $f = 0$ i.e. $2 \ndv c$. (Note that $D'$ is square-free so $c'$ is also square-free.) Let $D^* := \frac{D}{c^*}$, $f_1 := \val_2(\gcd(D^*, D\Nm{w}))$,
$$f_2 := e - \log_2\parens{\frac{c^*}{c'}} = \begin{cases}
0 &\tif 2 \dv c,\\
e &\tif 2 \ndv c.
\end{cases}$$
Note that $f_2$ is the power of 2 dividing $D^*$ for
$$D^* = \frac{D}{c^*} = \frac{2^e D'}{2^{e-f_2} c'} = 2^{f_2} \frac{D'}{c'}$$
so
$$f_1 = \min(f_2, \val_2(D\Nm{w})).$$
For any $u \in [\DK]$, set
\begin{align*}
E_u &:=
\begin{cases}
\GS\parens{\chi_2; D' c} \chi_2(D\Nm{u} + D \Nm{w}) &\tif 2 \ndv c\\
\displaystyle
\delta_{D\Nm{u}-D\Nm{w},c}^{\bmod 2^{e-1}} \; 2^{e-1} \; \chi_2(a) \; \epx{- \frac{D' D\Nm{w} ab}{2^e}}\\
\qquad \times \parens{1 + \epx{\frac{D' (D\Nm{u} -  D\Nm{w}(2bc + 1) - D\Nm{w} c d)}{2^e}} \chi_2(1 + ac)} &\tif 2 \dv c
\end{cases}\\
F_u &:= \sum_{\underset{\gm^2 \equiv D\Nm{w} \bmod \frac{D'}{c'}}{\gm \bmod \frac{D'}{c'}}} \epx{\frac{2 x \frac{1}{c c^*} 2^{-f_2} \gm}{\frac{D'}{c'}}} \qquad \text{ for any } x \text{ such that } x^2 \equiv D\Nm{u} \bmod \frac{D'}{c'}\\
E'_u &:= \begin{cases}
1 &\tif e - f_1 = 0,\\
(-1)^{D\Nm{u}} &\tif e - f_1 = 1,\\
(-1)^{D\Nm{u}/2} &\tif e - f_1 = 2 \tand D\Nm{u} \equiv 0 \bmod 2,\\
(-1)^{\frac{D\Nm{u} + D\Nm{w}/2}{2}} \chi_2(-1) &\tif e - f_1 = 2 \tand D\Nm{u} \equiv 1 \bmod 2
\end{cases}\\
K_u &:= \epx{-\frac{D\Nm{u} \frac{ab}{D/c'}}{c'} - \frac{D\Nm{u} \frac{a}{c c^*}}{D^*} - \frac{D\Nm{w} \frac{d}{c c^*}}{D^*}}\\
&= \epx{-\frac{D\Nm{u} \frac{ab}{D/c'}}{c'} - \frac{\frac{1}{2^{f_2}} D\Nm{u} \frac{a}{c c^*}}{D'/c'} - \frac{\frac{1}{2^{f_2}} D\Nm{w} \frac{d}{c c^*}}{D'/c'} - \frac{\frac{1}{D'/c'} D\Nm{u} \frac{a}{c c^*}}{2^{f_2}}  - \frac{\frac{1}{D'/c'} D\Nm{w} \frac{d}{c c^*}}{2^{f_2}}}
\end{align*}

Then with \thref{lm:gauss_salie_sum} and a lot of computations, we find that the inner sum over $j$ on the left hand side of \eqref{eq:hjf_criterion} is likewise given by
\begin{align*}
A_u &= B_u + (1 - \delta_{f_1, 0}) C_u
\end{align*}
where
\begin{align*}
B_u &= c' \; \delta_{D\Nm{u}, D\Nm{dw}}^{\bmod c'} F_u K_u E_u \frac{1 + \chi_2(D\Nm{w})}{1 + \chi_2(D\Nm{u})} \;
\GS(\psi_{\frac{D^*}{2^{f_2}}}; 2^{f_2} c^* c) \psi_{c'}(a)
\\
C_u &= c \; \delta_{D\Nm{u},D\Nm{dw}}^{\bmod c} F_u K_u E'_u \frac{1}{1 + \chi_2(D\Nm{u})} \GS(\psi_{D^*}) \psi_{c}(a)
\end{align*}
In the formula above, the $B_u$ and $(1 - \delta_{f_1, 0}) C_u$ are the partition of the sum over $j$ in the definition of $A_u$ into those such that $2 \ndv \gcd(D\Nm{w}, m)$ and $2 \dv \gcd(D\Nm{w}, m)$ respectively. Certainly, there exists $j$ such that  $2 \dv \gcd(D\Nm{w}, m)$
only when $2 \dv \gcd(D\Nm{w}, D^*)$ i.e. $f_1 \not= 0$; the fact we indicated by the factor $(1 - \delta_{f_1, 0})$ in the formula.

Note that $1 - \delta_{0,f_1} \not= 0$ only when $c$ is odd and $2 \dv D\Nm{w}$; in which case, $f_2 = e$,  $\GS(\psi_{\frac{D^*}{2^{f_2}}}; 2^{f_2} c^* c) = \GS(\psi_{\frac{D^*}{2^e}}; 2^e)$, $\GS(\psi_{D^*}) = \GS(\psi_{\frac{D^*}{2^e}}; 2^e) \GS(\chi_2; \frac{D^*}{2^e})$ and notice that $\frac{D^*}{2^e} = \frac{D}{c^* 2^e} = \frac{D'}{c'}$ so if $1 - \delta_{0,f_1} \not= 0$, we have a simpler formula
\begin{align*}
A_u &= c' \; \delta_{D\Nm{u}, D\Nm{dw}}^{\bmod c'} F_u K_u \frac{1 + \chi_2(D\Nm{w})}{1 + \chi_2(D\Nm{u})} \;
\GS(\psi_{\frac{D^*}{2^{f_2}}}; 2^{f_2} c^* c) \psi_{c'}(a) [E_u + (1 - \delta_{f_1, 0}) E_u' \GS(\chi_2; D'/c')].
\end{align*}

As for the matrix, let $u = [\frac{x_1}{2} + \frac{i x_2}{\sD}]$ and $v = [\frac{y_1}{2} + \frac{i y_2}{\sD}]$ be representatives then we have
\begin{align*}
M_{u,v}(\sigma) &= \epx{\frac{\frac{1}{D/c'} b dy_2^2}{c'}} \frac{ \; \delta_{x_2, dy_2}^{\bmod c'} \; \GS(\psi_{c'}; a 2^f)}{i \sD}
\epx{\frac{\frac{1}{2^{e+f} c'^2}(ax_2^2 - 2x_2y_2 + d y_2^2)}{D'/c'}} M_{u,v}^*(\sigma)
\end{align*}
where the factor
\begin{align*}
M_{u,v}^*(\sigma) &:=2^{f-2} \delta_{x_2, d y_2}^{\bmod 2^{f - 1}}
\epx{
    \frac{\frac{1}{c'}(ax_1^2 - 2x_1y_1 + dy_1^2)}{2^{f+2}}
   + \frac{\frac{1}{D'c'} (ax_2^2 - 2x_2y_2 + d y_2^2)}{2^{e + f}}
}
\end{align*}
Here, we take a convention that $\delta_{X,Y}^{\bmod 1/2} = 1$ for all $X, Y \in \Z$.

Since $D = 2^e \; c' \; (\frac{D'}{c'})$ and the three factors are relatively prime,
$$\delta_{D\Nm{v}, D\Nm{w}}^{\bmod D} = \delta_{D\Nm{v}, D\Nm{w}}^{\bmod 2^e} \times \delta_{D\Nm{v}, D\Nm{w}}^{\bmod c'} \times \delta_{D\Nm{v}, D\Nm{w}}^{\bmod D'/c'}.$$
Similar to the odd discriminant case, we show that $A = \delta_{D\Nm{v}, D\Nm{w}}^{\bmod D}$ by showing that $A = 1$ only when $D\Nm{v} \equiv D\Nm{w} \bmod c'$, $D\Nm{v} \equiv D\Nm{w} \bmod \frac{D'}{c'}$ and $D\Nm{v} \equiv D\Nm{w} \bmod 2^e$; and $A = 0$ otherwise.

In all cases, we see that in order that $\frac{M_{u,v}(\sigma)}{D} A_u \not= 0$, $u$ must satisfies the two congruences $D\Nm{u} \equiv D\Nm{dw} \bmod c'$ and $x_2 \equiv dy_2 \bmod c'$. The second condition implies $D\Nm{u} \equiv D\Nm{dv} \bmod c'$. Thus, if $D\Nm{v} \not\equiv D\Nm{w} \bmod c'$, $A = 0$. So let us now assume that $D\Nm{v} \equiv D\Nm{w} \bmod c'$ and the sum reduces to over $u$ such that $x_2 \equiv dy_2 \bmod c'$ (for then $D\Nm{u} \equiv D\Nm{dw} \bmod c'$ holds automatically). Identifying $u$ with a pair $(x_1, x_2) \in \Z/2\Z \times \Z/(\frac{D}{2})\Z$, we can rewrite the sum
\begin{align*}
A &= \frac{\GS(\psi_{c'}; 2^f) \GS(\psi_{\frac{D'}{c'}}; 2^{e+f})}{i 2^e \sD} A^+ A^- \end{align*}
where
\newcommand{\sumUModTwo}{\sum_{\underset{x_2 \bmod 2^{e-1}}{x_1 \bmod 2}}}
\begin{align*}
A^+ &= \frac{1}{D'/c'} \sum_{\underset{x_2 \equiv dy_2 \bmod c'}{x_2 \bmod D'}}
F_u \epx{\frac{\frac{1}{2^{e+f} c'^2}(ax_2^2 - 2x_2y_2 + d y_2^2)}{D'/c'} - \frac{\frac{1}{2^{f_2}} D\Nm{u} \frac{a}{c c^*}}{D'/c'} - \frac{\frac{1}{2^{f_2}} D\Nm{w} \frac{d}{c c^*}}{D'/c'}}\\
A^- &= \sumUModTwo \epx{- \frac{\frac{1}{D'/c'} D\Nm{u} \frac{a}{c c^*}}{2^{f_2}}  - \frac{\frac{1}{D'/c'} D\Nm{w} \frac{d}{c c^*}}{2^{f_2}}} \frac{1 + \chi_2(D\Nm{w})}{1 + \chi_2(D\Nm{u})} \times \\ &\hspace{5cm} \times [E_u + (1 - \delta_{f_1, 0}) E_u' \GS(\chi_2; D'/c')] M_{u,v}^*(\sigma)
\end{align*}
We shall show that
\begin{align}
A^+ = \delta_{D\Nm{v}, D\Nm{w}}^{\bmod D'/c'} \label{eq:value_of_A+}
\end{align}
and
\begin{align}
\frac{\GS(\psi_{c'}; 2^f) \GS(\psi_{\frac{D'}{c'}}; 2^{e+f})}{i 2^e \sD} A^- = \delta_{D\Nm{v}, D\Nm{w}}^{\bmod 2^e} \label{eq:value_of_A-}
\end{align}
which completes the proof that $A = \delta_{D\Nm{v}, D\Nm{w}}^{D}$. But first, using property of Gauss sum and quadratic reciprocity, one can simplify \eqref{eq:value_of_A-} to
\begin{align}
A^- = \delta_{D\Nm{v}, D\Nm{w}}^{\bmod 2^e} (-1)^{\frac{(c'-1)(D'/c' - 1)}{4}} \psi_{D'}(2)^f \psi_{c'}(2^e) 2^e \GS(\chi_2; D')
\label{eq:value_of_A-_simplified}
\end{align}

We compute $A^+$ to establish \eqref{eq:value_of_A+}:
\begin{align*}
A^+ &= \frac{1}{D'/c'} \sum_{\underset{x_2 \equiv dy_2 \bmod c'}{x_2 \bmod D'}}
\sum_{\underset{\gm^2 \equiv D\Nm{w} \bmod \frac{D'}{c'}}{\gm \bmod \frac{D'}{c'}}} \epx{\frac{2 x_2 \frac{1}{c c^*} 2^{-f_2} \gm}{D'/c'}} \times \\ &\hspace{5cm} \times \epx{\frac{\frac{1}{2^{e+f}c'^2}(\cancel{ax_2^2} - 2x_2y_2 + d y_2^2)}{D'/c'} - \cancel{\frac{\frac{1}{2^{f_2}} x_2^2 \frac{a}{c c^*}}{D'/c'}} - \frac{\frac{1}{2^{f_2}} D\Nm{w} \frac{d}{c c^*}}{D'/c'}}\\
&\hspace{8cm}\tsince 2^{f_2} c c^* = 2^{f_2} 2^f c' 2^{e - f_2} c' = 2^e c'^2\\
&= \frac{1}{D'/c'} \epx{\frac{\frac{1}{2^{e+f}c'^2}d y_2^2}{D'/c'} - \frac{\frac{1}{2^{e+ f} c'^2} d D\Nm{w}}{D'/c'}} \sum_{\underset{\gm^2 \equiv D\Nm{w} \bmod \frac{D'}{c'}}{\gm \bmod \frac{D'}{c'}}} \sum_{\underset{x_2 \equiv dy_2 \bmod c'}{x_2 \bmod D'}}
 \epx{\frac{2\frac{1}{2^{e+f}c'^2}(\gm - y_2) x_2}{D'/c'}}\\
&= \frac{1}{D'/c'} \epx{\frac{\frac{1}{2^{e+f}c'^2}d y_2^2}{D'/c'} - \frac{\frac{1}{2^{e+ f} c'^2} d D\Nm{w}}{D'/c'}} \sum_{\underset{\gm^2 \equiv D\Nm{w} \bmod \frac{D'}{c'}}{\gm \bmod \frac{D'}{c'}}} \frac{D'}{c'} \delta_{\gm, y_2}^{\bmod D'/c'}\\
&= \frac{1}{D'/c'} \frac{D'}{c'} \delta_{y_2^2, D\Nm{w}}^{\bmod D'/c'} \epx{\frac{\frac{1}{2^{e+f}c'^2}d y_2^2}{D'/c'} - \frac{\frac{1}{2^{e+ f} c'^2} d D\Nm{w}}{D'/c'}}\\
&= \delta_{D\Nm{v}, D\Nm{w}}^{\bmod D'/c'}
\end{align*}
Thus, we showed that $A = 0$ if $D\Nm{v} \not\equiv D\Nm{w} \bmod D'/c'$.

To prove \eqref{eq:value_of_A-_simplified}, observe that it is in principle a finite check since we have finitely many (but a lot of) possibilities for $e, f$ and $a, b, c, d, \frac{D'}{c'}, D\Nm{w}, D\Nm{v} \mod 2^e$.
Note that $D\Nm{u} \equiv 2^{e-2} D' x_1^2 + x_2^2 \bmod 2^e$ only depend on $x_1 \bmod 2$ and $x_2 \bmod 2^{e-1}$. Also, we must have $\chi_2(D\Nm{u}) \not= -1$ so $\chi_2(D\Nm{u}) = 0$ if $2 \dv D\Nm{u}$ and is 1 otherwise. In other words, $\chi_2(D\Nm{u}) = \delta_{D\Nm{u},1}^{\bmod 2}$. In particular, if $e = 2$ then $D\Nm{u} \equiv 1 \bmod 4$ if $2 \ndv D\Nm{u}$. For illustration, we establish \eqref{eq:value_of_A-_simplified} in the case $e = 2$. The case $e = 3$ is similarly solved by case-by-case analysis.

From now on, we assume $e = 2$ and ``left hand side'' (LHS) and ``right hand side'' (RHS) refers to that of equation \eqref{eq:value_of_A-_simplified}. As the matrix $\sigma$ is fixed, we drop it in the notation $M_{u,v}^*(\sigma)$.

Note that when $e = 2$, we must have $D' \equiv 1 \bmod 4$ so that $D = 2^e D' = 4 D'$ is a fundamental discriminant.

\subsubsection{The case $f = 2$ and $e = 2$}

Note that in this case, $c$ is even so $a, d$ are both odd. Then
\begin{align}
A^- = \sumUModTwo \frac{1 + \chi_2(D\Nm{w})}{1 + \chi_2(D\Nm{u})} E_u M_{u,v}^*(\sigma).
\label{eq:A_minus_when_2_divides_c}
\end{align}
since $f_2 = 0$ and $1 - \delta_{f_1,0} = 0$. (Note that the above holds as long as $2 \dv c$ i.e. $f \geq 1$.)

By definition, $M_{u,v}^*(\sigma) \not= 0$ only when $x_1 \equiv dy_1 \bmod 2$ and $x_2 \equiv dy_2 \bmod 2$ in which case $\chi_2(D\Nm{u}) = \chi_2(d^2 D\Nm{v}) = \chi_2(D\Nm{v})$ and \eqref{eq:A_minus_when_2_divides_c} reduces to
\newcommand{\sumUModTwoCondition}{\sum_{\underset{x_1 = dy_1, x_2 \equiv dy_2 \bmod 2}{x_2 \bmod 2^{e-1}}}}
\begin{align}
A^- &= \sumUModTwoCondition \frac{1 + \chi_2(D\Nm{w})}{1 + \chi_2(D\Nm{v})} E_u M_{u,v}^*(\sigma) \notag\\
&= \frac{1 + \chi_2(D\Nm{w})}{1 + \chi_2(D\Nm{v})} \sumUModTwoCondition E_u M_{u,v}^*(\sigma)
\label{eq:A_minus_when_2_divides_c_reduced}
\end{align}
Likewise, observe that $E_u \not= 0$ only when $D\Nm{u} - D\Nm{w} \equiv c \bmod 2^{e-1}$ so we must have $D\Nm{v} \equiv D\Nm{w} \bmod 2^{e-1}$ in order that $A^- \not= 0$.
It also follows from the definition of
\begin{align*}
E_u &= \delta_{D\Nm{u}-D\Nm{w},c}^{\bmod 2^{e-1}} \; 2^{e-1} \; \chi_2(a) \; \epx{- \frac{D' D\Nm{w} ab}{2^e}} \times \\& \hspace{3cm} \times \parens{1 + \epx{\frac{D' (D\Nm{u} -  D\Nm{w}(2bc + 1) - D\Nm{w} c d)}{2^e}} \chi_2(1 + ac)}
\end{align*}
that we further need $D\Nm{v} \equiv D\Nm{w} \bmod 2^e$ in order that the last factor $1 + \epx{...} \chi_2(1 + ac) \not= 0$. Hence, $A^- = 0$ unless $D\Nm{v} \equiv D\Nm{w} \bmod 2^e$.

Now let us assume $D\Nm{v} \equiv D\Nm{w} \bmod 2^e$. We find that
$$E_u = 2^e \chi_2(a) \epx{- \frac{D' D\Nm{w} ab}{2^e}}$$
is independent of $u$ (if its parameter satisfies the summation condition $x_1 \equiv dy_1 \bmod 2$ and $x_2 \equiv dy_2 \bmod 2$) and so in case $e = f = 2$, one has\begin{align*}
A^- &= 2^e \chi_2(a) \epx{- \frac{D' D\Nm{w} ab}{2^e}} \frac{1 + \chi_2(D\Nm{w})}{1 + \chi_2(D\Nm{v})} \sum_{\underset{x_1 = dy_1, x_2 \equiv dy_2 \bmod 2}{x_2 \bmod 2^{e-1}}}  M_{u,v}^*(\sigma)\\
&= 2^e \chi_2(a) \epx{- \frac{D' D\Nm{v} ab}{2^e}} \epx{
    \frac{\frac{1}{c'}(a(dy_1)^2 - 2(dy_1)y_1 + dy_1^2)}{16}
   + \frac{\frac{1}{D'c'} (a(dy_2)^2 - 2(dy_2)y_2 + d y_2^2)}{16}
} \times \\ &\hspace{5cm} \times \parens{1 + \epx{\frac{a c'D'}{4}}} \parens{1 + \epx{\frac{ac'}{4}}}\\
&= 2^e \chi_2(a) \epx{- \frac{D' D\Nm{v} ab}{2^e}} \epx{
    \frac{bdy_1^2}{4}
   + \frac{\frac{1}{D'} b dy_2^2}{4}
} \parens{1 + \epx{\frac{a c'}{4}}}^2 \\
&= 2^e \chi_2(a) \parens{1 + \epx{\frac{a c'}{4}}}^2 \\
&= 2^e \chi_2(a) \parens{1 + 2\epx{\frac{a c'}{4}} + \underbrace{\epx{\frac{a c'}{2}}}_{= (-1)^{ac'} = -1}}\\
&= 2^e 2 \chi_2(a) \epx{\frac{a c'}{4}} = 2 i \chi_2(a) \chi_2(ac')\\
&= 2^{e + 1} i \chi_2(c').
\end{align*}
In this case, the right hand side of \eqref{eq:value_of_A-_simplified} is $2^e (-1)^{(c'-1)/2} 2 i = 2^e 2 i \chi_2(c')$. That is because $\GS(\psi_2, D') = 2 i$ and since $D' \equiv 1 \bmod 4$, we find that $\frac{D'}{c'} \equiv c' \bmod 4$ and so
$$(-1)^{(c'-1)(D'/c' - 1)/4} = (-1)^{(c'-1)^2/4} = (-1)^{(c'-1)/2}.$$
So we are done.

\subsubsection{The case $f = 1$ and $e = 2$}

For the case $f = 1$, we have
\begin{align*}
M_{u,v}^* &= 2 \delta_{2^{e-2}D', x_2 - d y_2}^{\bmod 2} \delta_{1,x_1 - dy_1}^{\bmod 2} \;
\epx{
    \frac{\frac{1}{c'}(ax_1^2 - 2x_1y_1 + dy_1^2)}{8}
   + \frac{\frac{1}{D'c'} (ax_2^2 - 2x_2y_2 + d y_2^2)}{2^{e + 1}}
}
\end{align*}
So $M_{u,v}^* \not= 0$ only for $x_2 = dy_2 + D' \equiv dy_2 + 1 \bmod 2$ and $x_1 \equiv dy_1 + 1 \bmod 2$ (then $D\Nm{u} \equiv (dy_1+1)^2 + (dy_2+1)^2 \equiv D\Nm{v} + 2d(y_1+y_2) + 2 \bmod 4$ so $D\Nm{u} \equiv D\Nm{v} \bmod 2$ and we still have $\chi_2(D\Nm{u}) = \chi_2(D\Nm{v})$. We likewise find that with such conditions on $x_1, x_2$, the factor $E_u \not= 0$ only when $D\Nm{v} \equiv D\Nm{w} \bmod 4$: first, we need $D\Nm{u} \equiv D\Nm{w} \bmod 2$ so $D\Nm{v} \equiv D\Nm{w} \bmod 2$ and given that, the factor
\begin{align*}
&1 + \epx{\frac{D' (D\Nm{u} -  D\Nm{w}(2bc + 1) - D\Nm{w} c d)}{2^e}} \chi_2(1 + ac)\\
&= 1 + \epx{\frac{D\Nm{v} + 2d(y_1+y_2) + 2 -  D\Nm{w} - 2 D\Nm{w} c' d}{4}} (-1)^{ac'} \tsince D' \equiv 1 \bmod 4 \tand c = 2 c'\\
&= 1 - \epx{\frac{D\Nm{v} -  D\Nm{w}}{4} + \frac{d(y_1 + y_2) + 1 - D\Nm{w}c'd}{2}}\\
&= 1 + \epx{\frac{D\Nm{v} -  D\Nm{w}}{4}}
\end{align*}
in the definition of $E_u$ is zero unless $D\Nm{v} \equiv D\Nm{w} \bmod 4$. When $D\Nm{v} \equiv D\Nm{w} \bmod 4$, the value of the above factor is 2. So if $D\Nm{v} \equiv D\Nm{w} \bmod 4$ then the summation in \eqref{eq:A_minus_when_2_divides_c_reduced} is just a singleton where $x_1 = dy_1 + 1$ and $x_2 = dy_2 + 1$:
\begin{align*}
A^- &= 2^e \; \chi_2(a) \; \epx{- \frac{D' D\Nm{w} ab}{2^e}} 2 \epx{
    \frac{\frac{1}{c'}(a(dy_1 + 1)^2 - 2(dy_1 + 1)y_1 + dy_1^2)}{8}
} \times \\ & \hspace{2cm} \times \epx{
   \frac{\frac{1}{D'c'} (a(dy_2 + 1)^2 - 2(dy_2 + 1)y_2 + d y_2^2)}{2^{e + 1}}
}\\
&= 2^{e + 1} \; \chi_2(a) \; \epx{- \frac{D\Nm{v} ab}{4} +
    \frac{\frac{1}{c'}(2(ad - 1) y_1 + (ad - 1)dy_1^2 + a)}{8}} \times \\ & \hspace{2cm} \times \epx{\frac{\frac{D'}{c'} (2(ad - 1) y_2 + (ad - 1)dy_2^2 + a)}{8}}\\
&= 2^{e + 1} \; \chi_2(a) \; \epx{- \frac{D\Nm{v} ab}{4} +
    \frac{\frac{1}{c'}(2 \cdot 2 b c'  \cdot  y_1 + 2 b c' dy_1^2 + a) + \frac{D'}{c'} (2  \cdot  2 b c'  \cdot y_2 + 2 b c' dy_2^2 + a)}{8}
}\\
&= 2^{e + 1} \; \chi_2(a) \; \epx{- \frac{D\Nm{v} ab}{4} +
    \frac{4b y_1 + 2 b dy_1^2 + D' (4 b y_2 + 2 b dy_2^2) + a c' + a \frac{D'}{c'}}{8}
}\\
&= 2^{e + 1} \; \chi_2(a) \; \epx{- \frac{D\Nm{v} ab}{4} +
    \frac{b y_1 + D' b y_2}{2}
    + \frac{b dy_1^2 + D' b dy_2^2}{4}
    + \frac{a c' (D' + 1)}{8}
}\\
&= 2^{e + 1} \; \chi_2(a) \; \epx{\frac{b((d - a) D\Nm{v} + 2 (y_1 + y_2))}{4} + \frac{a c' (D' + 1)}{8}}\\
&= 2^{e + 1} \; \chi_2(a) \; \epx{\frac{c'a (D' + 1)}{8}}
\end{align*}
Note here to derive the last equation above, we reason that $a \equiv d \equiv 1 \bmod 2$ so we either have $a - d \equiv 2 \bmod 4$ whence $\frac{d - a}{2} D\Nm{v} + y_1 + y_2 \equiv D\Nm{v} + y_1 + y_2 \equiv y_1^2 + y_2^2 + y_1 + y_2 \equiv 0 \bmod 2$ and so $b((d - a) D\Nm{v} + 2 (y_1 + y_2))\equiv 0 \bmod 4$; or $a - d \equiv 0 \bmod 4$ whence $ad \equiv 1 \bmod 4$ and so we must have $2 \dv b$ which also leads to $b((d - a) D\Nm{v} + 2 (y_1 + y_2))\equiv 0 \bmod 4$ since the other factor is clearly divisible by 2 and so
$$\frac{b((d - a) D\Nm{v} + 2 (y_1 + y_2))}{4} \in \Z$$
always.

Once again, we match the RHS perfectly as in the previous case for $e = f = 2$. The difference is the involvement of the $\psi_{D'}(2)$ since the exponent $f$ is odd:
\begin{align*}
&(-1)^{\frac{(c'-1)(D'/c' - 1)}{4}} \psi_{D'}(2)^f \psi_{c'}(2^e) 2^e \GS(\chi_2; D') = (-1)^{\frac{(c'-1)(D'/c' - 1)}{4}} \psi_{D'}(2) 2^{e + 1} i
\end{align*}
and we observe that
\begin{align*}
\epx{\frac{c'a (D' + 1)}{8}} &= \epx{\frac{c'a \frac{D' + 1}{2}}{4}}\\
&= \begin{cases}
\epx{\frac{c'a}{4}} &\tif \frac{D' + 1}{2} \equiv 1 \bmod 4 \iff D' \equiv 1 \bmod 8\\
\epx{-\frac{c'a}{4}} &\tif \frac{D' + 1}{2} \equiv -1 \bmod 4 \iff D' \equiv 5 \bmod 8
\end{cases}\\
&= \psi_{D'}(2) \epx{\frac{c'a}{4}} \tsince \epx{\frac{c'a}{4}} \in \{\pm i\}
\end{align*}
and so \eqref{eq:value_of_A-_simplified} reduces to
$$\chi_2(a) \epx{\frac{c'a}{4}} = (-1)^{\frac{(c'-1)(D'/c' - 1)}{4}} i = \chi_2(c') i$$
which was established in the previous case.

\subsubsection{The case $f = 0$ and $e = 2$}

If $f = 0$ then the equation \eqref{eq:value_of_A-_simplified} reads
\begin{align*}
&\sumUModTwo \epx{- \frac{\frac{1}{D'/c'} D\Nm{u} \frac{a}{c c^*}}{2^e}} \frac{1 + \chi_2(D\Nm{w})}{1 + \chi_2(D\Nm{u})} \times \\ &\hspace{3cm} \times [\GS\parens{\chi_2; D' c} \chi_2(D\Nm{u} + D \Nm{w}) + (1 - \delta_{f_1, 0}) E_u' \GS(\chi_2; D'/c')] M_{u,v}^*\\
&\quad = \delta_{D\Nm{v}, D\Nm{w}}^{\bmod 2^e} (-1)^{\frac{(c'-1)(D'/c' - 1)}{4}} \psi_{c'}(2^e) 2^e \GS(\chi_2; D') \epx{\frac{\frac{1}{D'/c'} D\Nm{w} \frac{d}{c c^*}}{2^e}}
\end{align*}
We can cancel the Gauss sum on both sides to get an equivalent equation
\begin{align}
&\sumUModTwo \epx{- \frac{\frac{1}{D'/c'} (\cancel{2^{e-2} D' x_1^2} + \cancel{x_2^2}) a}{2^e} - \frac{\frac{1}{D'/c'} D\Nm{w} d}{2^e}} \frac{1 + \chi_2(D\Nm{w})}{1 + \chi_2(D\Nm{u})} \times \notag\\
& \hspace{2cm} \times [\chi_2(D\Nm{u} + D \Nm{w}) + (1 - \delta_{f_1, 0}) E_u']  \times \notag\\
& \hspace{2cm} \times \epx{
    \frac{\frac{1}{c'}(\cancel{ax_1^2} - 2x_1y_1 + dy_1^2)}{4}
   + \frac{\frac{1}{D'c'} (\cancel{ax_2^2} - 2x_2y_2 + d y_2^2)}{2^e}
} \notag\\
=  & \delta_{D\Nm{v}, D\Nm{w}}^{\bmod 2^e} (-1)^{\frac{(c'-1)(D'/c' - 1)}{4}} \psi_{c'}(2^e) 2^e \chi_2(c)
\label{eq:value_of_A-_simplified_for_f=0_e=2}
\end{align}
whose left hand side can be further simplified to
\begin{align*}
&(1 + \chi_2(D\Nm{w})) \epx{-\frac{\frac{d}{D'/c'} (D\Nm{v} - D\Nm{w})}{2^e}}  \times \\ &\hspace{2cm} \times \sumUModTwo \frac{\chi_2(D\Nm{u} + D \Nm{w}) + (1 - \delta_{f_1, 0}) E_u'}{1 + \chi_2(D\Nm{u})} \epx{
   -\frac{\frac{1}{c'} x_1y_1}{2}
   -\frac{\frac{1}{D'c'} x_2y_2}{2^{e-1}}
}
\end{align*}
since
$$\epx{
    \frac{\frac{1}{c'}(dy_1^2)}{4}
   + \frac{\frac{1}{D'c'}(d y_2^2)}{2^e}
} = \epx{\frac{\frac{1}{c'} (dy_1^2) 2^{e-2} + \frac{1}{D'c'}(d y_2^2)}{2^e}
} = \epx{\frac{\frac{d}{D'/c'}  [2^{e-2} D' y_1^2 + y_2^2]}{2^e}
} = \epx{\frac{\frac{d}{D'/c'} D\Nm{v}}{2^e}}.$$

If $2 \ndv D\Nm{w}$ then $\chi_2(D\Nm{u} + D \Nm{w}) + (1 - \delta_{f_1, 0}) E_u' = \chi_2(D\Nm{u} + D \Nm{w})$ is non-zero only when $D\Nm{u} \equiv 0 \bmod 2$; in other words, we need $x_1 \equiv x_2 \bmod 2$. In this case, we always have $\chi_2(D\Nm{w}) = 1$ and $\chi_2(D\Nm{u}) = 0$. So the left hand side of \eqref{eq:value_of_A-_simplified_for_f=0_e=2} is just
$$2 \epx{\frac{\frac{1}{D'/c'} (D\Nm{v} - D\Nm{w}) d}{2^e}} \sum_{\underset{\underset{2 \dv D\Nm{u}}{x_2 \bmod 2^{e-1}}}{x_1 \bmod 2}} \chi_2(D\Nm{u} + D\Nm{w})
\epx{
    \frac{\frac{1}{c'}(- 2x_1y_1)}{4}
   + \frac{\frac{1}{D'c'} (-2x_2y_2)}{2^e}
}$$
If $e = 2$ then note that $D' \equiv 1 \bmod 4$ and $D\Nm{w} \equiv 1 \bmod 4$ and so
\begin{align*}
&\sum_{\underset{\underset{2 \dv D\Nm{u}}{x_2 \bmod 2^{e-1}}}{x_1 \bmod 2}} \chi_2(D\Nm{u} + D\Nm{w})
\epx{
    \frac{\frac{1}{c'}(- 2x_1y_1)}{4}
   + \frac{\frac{1}{D'c'} (-2x_2y_2)}{2^e}
}\\
=& \sum_{\underset{x_2 = x_1}{x_1 \bmod 2}} \chi_2(2x_1^2 + D\Nm{w})
\epx{
    \frac{\frac{1}{c'}(-2x_1y_1)}{4}
   + \frac{\frac{1}{c'}(-2x_2y_2)}{4}
}\\
=&  \sum_{\underset{x_2 = x_1}{x_1 \bmod 2}}
(-1)^{\frac{2x_1^2 + D\Nm{w}-1}{2}}
\epx{
    \frac{\frac{-1}{c'}(y_1+y_2)x_1}{2}
}\\
=&  \sum_{\underset{x_2 = x_1}{x_1 \bmod 2}}
(-1)^{\frac{2x_1^2}{2}}
(-1)^{(y_1+y_2)x_1}\\
=& 1 - (-1)^{y_1 + y_2}
\end{align*}
so $A^- = 0$ unless $y_1 + y_2 \equiv 1 \bmod 2$, in which case $D\Nm{v} \equiv 1 \bmod 2$ and so $D\Nm{v} \equiv D\Nm{w} \equiv 1 \bmod 4$. When that happen the left hand side is just 4. And the remaining factor on the right hand side $(-1)^{(c'-1)(D'/c' - 1)/4} \psi_{c'}(2^e) \chi_2(c) = 1$. That is because we must have $c' \equiv D'/c' \bmod 4$ here and so $(-1)^{(c'-1)(D'/c' - 1)/4} = (-1)^{(c'-1)/2} = \chi_2(c)$.

Now let us consider $2 \dv D\Nm{w}$. We either have $e - f_1 = 0$ i.e. $D\Nm{w} \equiv 0 \bmod 4$ whence $E'_u = 1$ or $e - f_1 = 1$ i.e. $D\Nm{w} \equiv 2 \bmod 4$ whence $E'_u = (-1)^{D\Nm{u}}$. And so
\begin{align*}
\LHS_{\eqref{eq:value_of_A-_simplified_for_f=0_e=2}} &= \epx{-\frac{\frac{d}{D'/c'} (D\Nm{v} - D\Nm{w})}{2^e}} \sum_{x_1, x_2 \bmod 2} \frac{\chi_2(D\Nm{u} + D \Nm{w}) + E_u'}{1 + \chi_2(D\Nm{u})}
\epx{
   -\frac{\frac{1}{c'} x_1y_1}{2}
   -\frac{\frac{1}{D'c'} x_2y_2}{2}
}\\
&= \epx{...} \sum_{x_1, x_2 \bmod 2}  \frac{\chi_2(D\Nm{u} + D \Nm{w}) + E_u'}{1 + \chi_2(D\Nm{u})} (-1)^{x_1y_1 + x_2y_2} \tsince D' \equiv 1 \bmod 4\\
&= \epx{...} \left\{ \sum_{\underset{x_2 = x_1}{x_1 \bmod 2}} \frac{\chi_2(D\Nm{u} + D \Nm{w}) + E_u'}{1 + \chi_2(D\Nm{u})} (-1)^{x_1y_1 + x_2y_2} + \right. \\ &\hspace{4cm} \left. + \sum_{\underset{x_2 = 1 - x_1}{x_1 \bmod 2}} \frac{\chi_2(D\Nm{u} + D \Nm{w}) + E_u'}{1 + \chi_2(D\Nm{u})} (-1)^{x_1y_1 + x_2y_2} \right\}\\
&= \epx{...} \parens{\sum_{\underset{x_2 = x_1}{x_1 \bmod 2}} E_u' (-1)^{x_1(y_1 + y_2)} + \sum_{\underset{x_2 = 1 - x_1}{x_1 \bmod 2}} \frac{(-1)^{(D\Nm{u} + D \Nm{w} - 1)/2} + E_u'}{2} (-1)^{x_1(y_1 - y_2) + y_2}}\\
&= \epx{...} \parens{\sum_{\underset{x_2 = x_1}{x_1 \bmod 2}} E_u' (-1)^{x_1(y_1 + y_2)} + \frac{(-1)^{y_2}}{2} \sum_{\underset{x_2 = 1 - x_1}{x_1 \bmod 2}} [(-1)^{D \Nm{w}/2} + E_u'] (-1)^{x_1(y_1 + y_2)}}\\
&= \epx{...} \begin{cases}
\parens{\sum_{\underset{x_2 = x_1}{x_1 \bmod 2}} (-1)^{x_1(y_1 + y_2)} + (-1)^{y_2} \sum_{\underset{x_2 = 1 - x_1}{x_1 \bmod 2}} (-1)^{x_1(y_1 + y_2)}} \\ \hspace{8cm} \tif D\Nm{w} \equiv 0 \bmod 4,\\
\parens{\sum_{\underset{x_2 = x_1}{x_1 \bmod 2}} (-1)^{x_1(y_1 + y_2)} + \frac{(-1)^{y_2}}{2} \sum_{\underset{x_2 = 1 - x_1}{x_1 \bmod 2}} (- 1 - 1) (-1)^{x_1(y_1 + y_2)}} \\ \hspace{8cm} \tif D\Nm{w} \equiv 2 \bmod 4\\
\end{cases}\\
&= \epx{...} \begin{cases}
(1 + (-1)^{y_2}) \parens{\sum_{x_1 \bmod 2} (-1)^{x_1(y_1 + y_2)}} &\tif D\Nm{w} \equiv 0 \bmod 4,\\
(1 - (-1)^{y_2}) \parens{\sum_{x_1 \bmod 2} (-1)^{x_1(y_1 + y_2)}} &\tif D\Nm{w} \equiv 2 \bmod 4\\
\end{cases}\\
&= \epx{...}
(1 + (-1)^{D\Nm{w}/2} (-1)^{y_2}) \parens{1 + (-1)^{y_1 + y_2}}
\end{align*}
is non-zero only when $y_1 + y_2 \equiv 0 \bmod 2$ and $y_2 + \frac{D\Nm{w}}{2} \equiv 0 \bmod 2$. The first implies $y_1 \equiv y_2 \bmod 2$; together with the second condition, we get $D\Nm{v} \equiv D\Nm{w} \bmod 4$. So we find that LHS of \eqref{eq:value_of_A-_simplified_for_f=0_e=2} is 0 if $D\Nm{v} \not\equiv D\Nm{w} \bmod 4$ and is 4 if $D\Nm{v} \equiv D\Nm{w} \bmod 4$ in case $2 \dv D\Nm{w}$ as well.

\subsection{The map from elliptic forms to plus forms}
\label{sec:to_plus_forms}

\let\ifAllProof\iffalse\let\ifFullProof\iftrue\let\ifPersonalNote\iffalse

\unless\ifdefined\IsMainDocument

\documentclass[12pt]{amsart}
\usepackage{amsmath,amssymb,iftex,cancel,xy,xcolor,verbatim}
\usepackage{theoremref}
\usepackage[normalem]{ulem}
\usepackage[margin=1in]{geometry}

\xyoption{all}

\newtheorem{thm}{Theorem}[section]
\newtheorem{lm}[thm]{Lemma}
\newtheorem{prop}[thm]{Proposition}
\newtheorem{cor}[thm]{Corollary}
\theoremstyle{definition}
\newtheorem{defn}[thm]{Definition}
\newtheorem{remark}[thm]{Remark}
\newtheorem{example}[thm]{Example}

\numberwithin{equation}{section}

\title{Hermitian Maass lift for General Level}
\date{}
\author{Lawrence (An Hoa) Vu}

\ifPersonalNote
	\ifPDFTeX
		\usepackage{mdframed}
		\newmdtheoremenv[
		    linewidth=2pt,
		    leftmargin=0pt,
		    innerleftmargin=0.4em,
		    rightmargin=0pt,
		    innerrightmargin=0.4em,
		    innertopmargin=-5pt,
		    innerbottommargin=3pt,
		    splittopskip=\topskip,
		    splitbottomskip=0.3\topskip,
		    skipabove=0.6\topsep
		]{pnote}{Personal note}
	\else
		\definecolor{OliveGreen}{rgb}{0,0.6,0}
		\newcounter{personaln}
		\newenvironment{pnote}{\color{OliveGreen}
		\stepcounter{personaln}
		\par\bigskip\noindent{\bfseries Personal note \arabic{personaln}:}
		\par\medskip}{\par\medskip}
	\fi
\else	
	\let\pnote\comment
	\let\endpnote\endcomment
\fi

\allowdisplaybreaks

\ifPDFTeX
\else
	\usepackage{yfonts}
	\renewcommand{\mathfrak}{\textfrak}
\fi

\newcommand{\LHS}{\text{LHS}}

\newcommand{\UHP}{\mathfrak{H}}
\newcommand{\C}{\mathbb{C}}
\newcommand{\R}{\mathbb{R}}
\newcommand{\Q}{\mathbb{Q}}
\newcommand{\Z}{\mathbb{Z}}

\newcommand{\term}[1]{\emph{#1}}

\newcommand{\Maass}{Maa\ss\ }

\newcommand{\Mat}[1]{M_{#1}}
\newcommand{\GGm}{\mathsf{G_m}}
\newcommand{\GL}[1]{\mathsf{GL}_{#1}}
\newcommand{\SL}[1]{\mathsf{SL}_{#1}}
\newcommand{\GU}[1]{\mathsf{GU}(#1,#1)}
\newcommand{\U}[1]{\mathsf{U}(#1,#1)}
\newcommand{\SU}[1]{\mathsf{SU}(#1,#1)}
\newcommand{\HerMat}[1]{\mathbb{S}_{#1}} \newcommand{\Res}{\text{Res}} 

\newcommand{\GS}{\mathbf{G}} \newcommand{\aDNu}{a_u} \newcommand{\aDNw}{a_w} 

\newcommand{\MF}{\mathfrak{M}}
\newcommand{\CF}{\mathfrak{S}}
\newcommand{\JF}{\mathfrak{J}}
\newcommand{\MFDp}{\MF_{k-1}^+(D, \chi)}
\newcommand{\MFDNp}{\MF_{k-1}^+(DN, \chi)}
\newcommand{\MFDN}{\MF_{k-1}(DN, \chi)}

\newcommand{\Matx}[1]{\begin{pmatrix} #1 \end{pmatrix}}

\newcommand{\ConjTran}[1]{#1^*} \newcommand{\ConjTranInv}[1]{\widehat{#1}} 

\newcommand{\epx}[1]{\mathsf{e}\left[#1\right]}
\newcommand{\sign}[1]{\mathsf{sign}\left[#1\right]}
\newcommand{\cc}[1]{\overline{#1}}

\newcommand{\OK}{\mathfrak{o}_K} \newcommand{\DK}{\mathfrak{d}_K} \newcommand{\Mabcd}{\Matx{a & b\\c & d}}
\newcommand{\Mxyzt}{\Matx{x & y\\z & t}}
\newcommand{\val}{\mathsf{val}}

\newcommand{\parens}[1]{\left( #1 \right)}
\newcommand{\braces}[1]{\left\{ #1 \right\}}
\newcommand{\suchthat}{\;\vline\;}
\newcommand{\tif}{\text{ if }}
\newcommand{\tand}{\text{ and }}
\newcommand{\twhere}{\text{ where }}
\newcommand{\totherwise}{\text{ otherwise }}
\newcommand{\tsince}{\text{ since }}
\newcommand{\teither}{\text{ either }}
\newcommand{\tforsome}{\text{ for some }}
\newcommand{\tor}{\text{ or }}

\newcommand{\Gm}{\Gamma}
\renewcommand{\a}{\alpha}
\renewcommand{\b}{\beta}
\newcommand{\gm}{\gamma}
\newcommand{\ld}{\lambda}
\newcommand{\eps}{\varepsilon}
\renewcommand{\theta}{\vartheta}
\renewcommand{\phi}{\varphi}

\newcommand{\dv}{\;|\;}
\newcommand{\ndv}{\;\nmid\;}
\newcommand{\smD}{\sqrt{-D}}
\newcommand{\sD}{\sqrt{D}}
\renewcommand{\gcd}{}
\newcommand{\jcb}[2]{(#1 \;|\; #2)}
\newcommand{\Tr}{\mathsf{Tr}}
\newcommand{\Nm}[1]{|#1|^2}  \newcommand{\Id}{\text{Id}}
\newcommand{\Gal}{\text{Gal}}

\renewcommand{\labelenumi}{(\roman{enumi})}

\begin{document}

\fi 
\newcommand{\CFDN}{\CF_{k-1}(DN, \chi)}
\newcommand{\CFDNp}{\CF_{k-1}^+(DN, \chi)}

With Berger-Klosin's result, we have completed most of the Saito-Kurokawa lift. All that remains is to construct a surjective linear map
$$\CFDN \rightarrow \CFDNp.$$

For this linear map, we modify Ikeda's construction in \cite{Ikeda2008}. We recall the notation of \cite{Ikeda2008}. For any positive integer $M$ and prime $q$, we denote
$$M_q = q^{\val_q(M)}$$
where $\val_q$ is the $q$-adic valuation and for a Dirichlet character $\psi$ modulo $M$, we denote $\psi_q$ to be the character mod $M_q$ defined by $\psi_q(n) = \psi_q(n')$ for $\gcd(n, q) = 1$ where $n'$ is any integer such that
$$n' \equiv \begin{cases}
n &\mod M_q\\
1 &\mod M/M_q
\end{cases}$$
and $\psi_q(n) = 0$ in case $\gcd(n, q) > 1$. Let $\psi'_q$ be the character $\psi/\psi_q$. (When $\psi = \chi_K$, this notation agrees with what we used previously in section \ref{sec:hermitian_modular_forms}.)
We remark that our weight $k - 1$ corresponds to the weight $2k + 1$ in Ikeda's paper.

As in \cite{Ikeda2008}, we observe that for a primitive form $f \in \CF_{k-1}(Dm, \chi)$ and every subset $Q \subset Q_D := \{\text{prime divisors of } D\}$, there exists a primitive form $f_Q$ whose Fourier coefficients at prime $p$ is given by$$a_{f_Q}(p) = \begin{cases}
\chi_Q(p) a_f(p) &\text{ if } p \not\in Q,\\
\chi_Q'(p) \cc{a_f(p)} &\text{otherwise.}
\end{cases}$$
where
$$\chi_Q := \prod_{q \in Q} \chi_q \qquad \tand \qquad \chi'_Q := \prod_{q \in Q_D \backslash Q} \chi_q = \frac{\chi}{\chi_Q}.$$

For any primitive form $f \in \CF_{k-1}(Dm, \chi)$ where $m \dv N$ and for any $\ell$ such that $\ell m \dv N$, we define
$$f^*[\ell] := \sum_{Q \subset Q_D} \chi_Q(-\ell) f_Q$$
Let us recycle the notation $\sigma_\ell := \Matx{\ell & \\ & 1}$. Then any general form $f \in \CFDN$ can be expressed uniquely as
$$f = \sum_{i \in I} \alpha_i \; (f_i|_{k-1} \sigma_{\ell_i})$$
where $I$ is some finite indexing set, $\alpha_i \in \C^\times$, $f_i \in \CF_{k-1}(D m_i, \chi)$ are primitive forms (under appropriate independence assumption) and $m_i, \ell_i$ are natural numbers such that $m_i \ell_i \dv N$ and we set
$$f^* := \sum \alpha_i \; (f_i^*[\ell_i] \;|\; \sigma_{\ell_i}).$$

It is easy to verify that $f^* \in \CFDNp$. This is because Lemma 15.4 in \cite{Ikeda2008} (which we restate below) holds verbatim  and Corollary 15.5 follows with a minor twist:

\newcommand{\chiu}{\underline{\chi}}

\begin{lm}[Lemma 15.4 for higher level]
Let $f \in \CF_{k-1}(D m, \chi)$ be a primitive form with $\gcd(m, D) = 1$. For any $M \in \Z$, we split the prime factorization of $M$ into primes not dividing $D$, primes dividing $D$ that are not in $Q$, and primes dividing $D$ in $Q$: $$M' = \prod_{p \nmid D} p^{\val_p(M)} \qquad M_Q' = \prod_{p \dv D, p \not\in Q} p^{\val_p(M)} \qquad M_Q = \prod_{p \dv D, p \in Q} p^{\val_p(M)}$$
as in the notation of Ikeda. Then the $M$-th Fourier coefficient of $f_Q$ are related to that of $f$ by
$$a_{f_Q}(M) = a_f(M' M'_Q) \cc{a_f(M_Q)} \prod_{q \in Q} \chiu_q(M)$$
where $\chiu = \bigotimes' \chiu_q$ is the idele character corresponding to the quadratic character $\chi$.
\thlabel{lm:Fourier_coefficient_f_Q}
\end{lm}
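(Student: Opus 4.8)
The plan is to transcribe Ikeda's proof of his Lemma 15.4 in \cite{Ikeda2008}, the only genuinely new feature being the extra, prime-to-$D$, part $m$ of the level. First recall that $f_Q$, whose existence was noted above, is itself a primitive form of nebentypus $\chi$, so its Fourier coefficients are multiplicative; factoring $M = M' \cdot M_Q' \cdot M_Q$ into its pairwise coprime pieces reduces the asserted identity to the case of a prime power $M = p^r$, and there are three cases according to whether $p \nmid D$, or $p \mid D$ with $p \notin Q$, or $p \mid D$ with $p \in Q$.

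The one computational input is standard newform theory: the generating series $\sum_{r \ge 0} a_g(p^r) X^r$ of a primitive form $g$ of nebentypus $\chi$ equals $\left(1 - a_g(p) X + c_p X^2\right)^{-1}$ with $c_p$ vanishing exactly when $p$ divides the conductor of $\chi$ or $p^2$ divides the level, and twisting by a Dirichlet character $\eta$ of conductor prime to $p$ replaces this by the series with $X$ replaced by $\eta(p) X$, so $a_{g \otimes \eta}(p^r) = \eta(p)^r a_g(p^r)$ for all $r$. For $p \notin Q$ the recipe $a_{f_Q}(p) = \chi_Q(p) a_f(p)$, together with $\chi_Q(p)^2 = 1$ (because $\chi_Q = \prod_{q \in Q} \chi_q$ is a product of quadratic characters of conductor prime to $p$), shows that the substitution $X \mapsto \chi_Q(p) X$ carries the Euler factor of $f$ at $p$ to that of $f_Q$ — the $X^2$-coefficient being unchanged since $\chi_Q(p)^2 = 1$ — so $a_{f_Q}(p^r) = \chi_Q(p)^r a_f(p^r)$. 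Since $p$ is a unit in $\Z_q$ for every $q \in Q$, one has $\prod_{q \in Q} \chiu_q(p^r) = \chi_Q(p)^r$, which settles the first two cases; in particular the primes dividing $m$ are harmless, being coprime to $D$.

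The only case that genuinely uses idelic input is $p \mid D$, $p \in Q$. Here $a_{f_Q}(p) = \chi'_Q(p)\,\overline{a_f(p)}$ with $\chi'_Q = \prod_{q \mid D,\, q \notin Q} \chi_q$, and, $p$ dividing the conductor of $\chi$, the Euler factor at $p$ is linear for both $f$ and $f_Q$, so $a_{f_Q}(p^r) = \chi'_Q(p)^r\,\overline{a_f(p)}^{\,r} = \chi'_Q(p)^r\,\overline{a_f(p^r)}$; it remains to identify $\chi'_Q(p)$ with $\prod_{q \in Q} \chiu_q(p) = \chiu_p(p) \prod_{q \in Q,\, q \neq p} \chi_q(p)$. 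For this I invoke the product formula $\prod_{v} \chiu_v(p) = 1$ (over all places, including the archimedean $\chiu_\infty$) for the Hecke character $\chiu$ attached to $\chi$: its archimedean component is the sign character, since $\chi = \chi_K$ is odd, hence trivial on the positive prime $p$; $\chiu_q$ is unramified with $\chiu_q(p) = 1$ for $q \nmid D$, $q \neq p$; and $\chiu_q(p) = \chi_q(p)$ for $q \mid D$, $q \neq p$. The product formula therefore collapses to $\chiu_p(p) = \prod_{q \mid D,\, q \neq p} \chi_q(p)$; substituting and cancelling the factors $\chi_q(p)^2 = 1$ for $q \in Q \setminus \{p\}$ leaves exactly $\prod_{q \mid D,\, q \notin Q} \chi_q(p) = \chi'_Q(p)$, as needed.

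I expect the bookkeeping of this last case — sorting the primes $q$ by whether they lie in $Q$, equal $p$, or merely divide $D$, and reconciling the local value $\chiu_p(p)$ with the corresponding product of Dirichlet symbols — to be the only point of substance; everything else is a faithful copy of Ikeda's proof, and the hypothesis that $m$ is prime to $D$ ensures its prime divisors behave exactly like Ikeda's good primes.
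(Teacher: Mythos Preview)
Your proposal is correct and follows essentially the same route as the paper's proof (which, like yours, is a close transcription of Ikeda's argument): reduce by multiplicativity to prime powers $p^r$, then compare the local Euler factors of the two primitive forms $f$ and $f_Q$ at $p$ according to whether $p\in Q$ or not, invoking the product formula for $\chiu$ (this is Ikeda's Lemma 15.1) to identify $\chiu_p(p)$ in the bad case. The paper organizes the reduction slightly differently---it first reduces to a singleton $Q=\{q\}$ via $f_Q=f\mid\prod_{q\in Q}\eta_q$ and uses the Satake parameters $\alpha_f(p),\beta_f(p)$ explicitly where you substitute $X\mapsto\chi_Q(p)X$ in the Euler factor---but these are cosmetic differences; your parenthetical remark about when the $X^2$-coefficient $c_p$ vanishes is slightly imprecise (for a newform of level $Dm$ the Euler factor at $p$ is linear exactly when $p\mid Dm$), but your argument does not actually depend on that precise criterion, only on $f$ and $f_Q$ having Euler factors of the same shape at each $p$, which holds since both are primitive of level $Dm$.
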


\begin{cor}
Let $f$ be as in previous lemma. Then
\begin{align*}
a_{f^*[\ell]}(M) &= a_f(M') \prod_{q | D}(a_f(M_q) + \chi_q(\ell)  \chiu_q(-M) \cc{a_f(M_q)})\\
&= a_f(M') a_D(\ell M) \prod_{q \dv (D, M)} (a_f(M_q) + \chi_q(-1) \chi_q(\ell) \cc{a_f(M_q)} \chiu_q(M))
\end{align*}
\end{cor}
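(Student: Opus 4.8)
The plan is to expand $f^*[\ell]=\sum_{Q\subset Q_D}\chi_Q(-\ell)f_Q$ term by term (its Fourier coefficients being the corresponding linear combination of those of the $f_Q$), substitute the formula for $a_{f_Q}(M)$ supplied by \thref{lm:Fourier_coefficient_f_Q}, and then recognize the resulting sum over subsets of $Q_D$ as a product of two-term factors indexed by the primes $q\mid D$.

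First I would invoke \thref{lm:Fourier_coefficient_f_Q}, giving $a_{f_Q}(M)=a_f(M'M'_Q)\,\cc{a_f(M_Q)}\,\prod_{q\in Q}\chiu_q(M)$. Because $f$ is primitive its Fourier coefficients are multiplicative, i.e. $a_f(M_1M_2)=a_f(M_1)a_f(M_2)$ for coprime $M_1,M_2$; writing $M_q:=q^{\val_q(M)}$, the factorizations $M'M'_Q=M'\prod_{q\mid D,\,q\notin Q}M_q$ and $M_Q=\prod_{q\mid D,\,q\in Q}M_q$ involve pairwise coprime pieces, so
\[
a_{f_Q}(M)=a_f(M')\prod_{q\mid D,\,q\notin Q}a_f(M_q)\ \prod_{q\mid D,\,q\in Q}\cc{a_f(M_q)}\,\chiu_q(M).
\]
Summing against $\chi_Q(-\ell)=\prod_{q\in Q}\chi_q(-\ell)$ and using the elementary fact that a sum over all subsets $Q$ of a finite set, of products of per-element data, factors as a product of binomials, one gets
\[
a_{f^*[\ell]}(M)=a_f(M')\prod_{q\mid D}\bigl(a_f(M_q)+\chi_q(-\ell)\,\cc{a_f(M_q)}\,\chiu_q(M)\bigr).
\]
Rewriting $\chi_q(-\ell)\chiu_q(M)=\chi_q(-1)\chi_q(\ell)\chiu_q(M)=\chi_q(\ell)\chiu_q(-M)$ (using $\chiu_q(-1)=\chi_q(-1)$) yields the first displayed identity.

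For the second identity I would split the product over $q\mid D$ according to whether $q\mid M$. If $q\nmid M$ then $M_q=1$, so $a_f(M_q)=\cc{a_f(M_q)}=1$ and $\chiu_q(M)=\chi_q(M)$, and the $q$-factor collapses to $1+\chi_q(\ell)\chi_q(-M)=1+\chi_q(-\ell M)$, which is exactly the $q$-factor of $a_D(\ell M)=\prod_{p\mid D}(1+\chi_p(-\ell M))$. If $q\mid M$ then $\chi_q(-\ell M)=0$, so the $q$-factor of $a_D(\ell M)$ equals $1$; hence $a_D(\ell M)=\prod_{q\mid D,\,q\nmid M}(1+\chi_q(-\ell M))$ is precisely the product of the collapsed factors over $q\nmid M$. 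The surviving factors are those with $q\mid(D,M)$, and putting $\chiu_q(-M)=\chi_q(-1)\chiu_q(M)$ back turns them into $\prod_{q\mid(D,M)}\bigl(a_f(M_q)+\chi_q(-1)\chi_q(\ell)\cc{a_f(M_q)}\chiu_q(M)\bigr)$, which is the second displayed identity.

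The only delicate step is the bookkeeping with the idele character $\chiu$: one must check that on local units $\chiu_q$ agrees with the finite character $\chi_q$ — so that $\chiu_q(M)=\chi_q(M)$ and $\chiu_q(-1)=\chi_q(-1)$ whenever $q\nmid M$ — including at $q=2$, where $\chi_2$ has conductor $4$ or $8$. This is exactly the normalization built into Ikeda's notation in \cite{Ikeda2008}, so nothing beyond citing it is needed; everything else is multiplicativity of $a_f$ together with the subset-sum factorization.
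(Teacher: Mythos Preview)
Your proof is correct and follows essentially the same approach as the paper: substitute the formula from \thref{lm:Fourier_coefficient_f_Q}, use multiplicativity of $a_f$ to pull out $a_f(M')$, convert the sum over subsets $Q\subset Q_D$ into a product of binomials indexed by $q\mid D$, and then split that product according to whether $q\mid M$, invoking Ikeda's Lemma~15.1 (that $\chiu_q(M)=\chi_q(M)$ when $q\nmid M$) to identify the $q\nmid M$ factors with those of $a_D(\ell M)$. The only cosmetic difference is the order in which you write the character factors in the first identity.
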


With this computation, it is evident that the form $f^*[\ell] | \sigma_\ell \in \CF_{k-1}^+(Dm\ell, \chi)$ as $a_D(\ell^2 M) = a_D(M)$ as long as $\gcd(D, \ell) = 1$.

\begin{prop}[Ikeda \cite{Ikeda2008} Proposition 15.17 for higher level]
The map $\CFDN \rightarrow \CFDNp$ where $f \mapsto f^*$ is surjective.
\end{prop}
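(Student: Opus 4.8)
The plan is to combine the computation already carried out --- that $f^{*}[\ell]\,|\,\sigma_\ell \in \CF_{k-1}^{+}(Dm\ell,\chi)$ for a primitive $f$ of level $Dm$ --- with the Atkin--Lehner--Li decomposition of $\CFDN$ into newform blocks, and then to identify, block by block, the image of $f\mapsto f^{*}$ with the intersection of that block with $\CFDNp$. Write $\CFDN = \bigoplus \C\,(h\,|\,\sigma_\ell)$, the sum running over normalized newforms $h$ of level $Dm$ (with $m\dv N$, $\gcd(m,D)=1$, nebentypus $\chi$) and over $\ell\dv N/m$; since $\chi$ has conductor $D$, every such $h$ has level divisible by $D$, so this is exactly the shape of the expansions $f=\sum_i\alpha_i(f_i\,|\,\sigma_{\ell_i})$ used to \emph{define} $f^{*}$. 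Group the newforms into \emph{twist packets}, $h\sim h'$ iff $h'=h_Q$ for some $Q\subseteq Q_D$, and for a packet $P$ of level $Dm_P$ set $W_{P,\ell}:=\bigoplus_{h\in P}\C\,(h\,|\,\sigma_\ell)$, so $\CFDN=\bigoplus_{P}\bigoplus_{\ell\dv N/m_P}W_{P,\ell}$. By construction $f\mapsto f^{*}$ carries $W_{P,\ell}$ into itself, because $f_i^{*}[\ell_i]=\sum_Q\chi_Q(-\ell_i)(f_i)_Q$ lies in the span of the packet of $f_i$ and twisting by a character of conductor dividing $D$ does not change the level $Dm_P$.

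Next I would analyze the map on a single block. Transporting by $|\,\sigma_\ell$, it becomes $h\mapsto\sum_{Q\subseteq Q_D}\chi_Q(-\ell)\,h_Q$ on the span $\C\langle P\rangle$ of the packet. The twistings $\tau_Q\colon h\mapsto h_Q$ give an action of the elementary abelian $2$-group $(\Z/2\Z)^{Q_D}$ on $\C\langle P\rangle$ --- this is where \thref{lm:Fourier_coefficient_f_Q} enters, to check $\tau_Q\tau_{Q'}=\tau_{Q\triangle Q'}$ in spite of the complex conjugations --- and since the group is abelian with a single orbit, $\C\langle P\rangle$ is multiplicity free for this action. Hence $\sum_Q\chi_Q(-\ell)\tau_Q$ is $2^{\#Q_D}$ times the projector onto the $\eta_\ell$-isotypic line, where $\eta_\ell(Q):=\chi_Q(-\ell)\in\{\pm1\}$, and the image of $f\mapsto f^{*}$ on $W_{P,\ell}$ is the (possibly zero) line $\C\,(h^{*}[\ell]\,|\,\sigma_\ell)$ spanned by the already-constructed plus form.

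It then remains to show $\CFDNp\cap W_{P,\ell}$ equals this same line; summing over $P$ and $\ell$ gives surjectivity, once one knows $\CFDNp=\bigoplus_{P,\ell}(\CFDNp\cap W_{P,\ell})$. The inclusion $\supseteq$ is exactly the remark following the Corollary to \thref{lm:Fourier_coefficient_f_Q} ($a_D(\ell^{2}M)=a_D(M)$ when $\gcd(\ell,D)=1$). For $\subseteq$: if $g=G\,|\,\sigma_\ell$ with $G\in\C\langle P\rangle$, then $g\in\CFDNp$ iff $a_n(G)=0$ for every $n$ with $a_D(-\ell n)=0$, and by the coefficient formula of \thref{lm:Fourier_coefficient_f_Q} and its Corollary this is equivalent to $\tau_Q G=\chi_Q(-\ell)\,G$ for all $Q$, i.e. $G$ lies in the $\eta_\ell$-line. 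This is precisely Ikeda's argument for $N=1$ (\cite{Ikeda2008}, proof of Proposition 15.17), now with the auxiliary multiplier $\ell$ carried along unchanged.

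I expect two points to need genuine care. The first is the compatibility $\CFDNp=\bigoplus_{P,\ell}(\CFDNp\cap W_{P,\ell})$: the plus condition is \emph{not} preserved by every Hecke operator $T_p$ with $p\nmid DN$, so the projections onto the blocks cannot naively be realized by such operators. Instead one restricts to those $T_p$ with $\chi_q(p)=1$ for all $q\dv D$ (which do preserve the plus condition, by the coefficient computation, and which by strong multiplicity one still separate the finitely many Hecke eigensystems occurring across and within packets) to split off each $W_P$, and then uses the explicit coefficient formula to split $W_P$ into its $\ell$-components; this is the higher-level analogue of Lemma 4.1 of \cite{Klosin2018}. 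The second is the bookkeeping for packets with nontrivial self-twist stabilizer $H=\{Q:h_Q=h\}$ (in particular forms with CM by $K$): there the $\eta_\ell$-line is zero exactly when $\eta_\ell|_H$ is nontrivial, and one must check $\CFDNp\cap W_{P,\ell}=0$ in precisely those cases, which again follows from the coefficient formula. Everything else is routine linear algebra.
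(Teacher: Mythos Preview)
Your approach via twist packets and the $(\Z/2\Z)^{Q_D}$ action is conceptually sound and would work, but the paper takes a more direct route that bypasses precisely the compatibility issue you flagged as needing genuine care. Rather than decomposing $\CFDN$ into blocks $W_{P,\ell}$ and then verifying that $\CFDNp$ respects this decomposition, the paper works with an arbitrary $g\in\CFDNp$ written as $g=\sum_i\alpha_i\,(g_i\,|\,\sigma_{\ell_i})$ and, for each fixed $Q\subseteq Q_D$, shows directly that $g$ coincides with the twisted form $g'_Q:=\sum_i\alpha_i\,\chi_Q(-\ell_i)\,(g_i)_Q\,|\,\sigma_{\ell_i}$. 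This is done by computing $a_{g-g'_Q}(n)=[1-\chi_Q(-n)]\,a_g(n)$ for all $n$ with $(n,D_Q)=1$ (via \thref{lm:Fourier_coefficient_f_Q}), noting that either $a_g(n)=0$ by the plus condition or else $\chi_Q(-n)=1$, and then invoking Miyake's Theorem~4.6.8 to conclude $g-g'_Q=0$. Summing the identities $g=g'_Q$ over all $Q$ yields $2^{|Q_D|}\,g=\sum_Q g'_Q=g^*$, so $g$ lies in the image of $f\mapsto f^*$. This is essentially your step-4 computation, but applied to the \emph{full} form $g$ rather than block by block; that single change makes your step~5 (and the self-twist bookkeeping) entirely unnecessary.

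What your approach buys is a transparent structural picture: $f\mapsto f^*$ is $2^{|Q_D|}$ times the projection onto $\CFDNp$, realized as an isotypic projector for a finite group action. What the paper's approach buys is brevity and no reliance on separating the $W_{P,\ell}$ by restricted Hecke operators --- a step whose justification (strong multiplicity one over a density-$2^{-|Q_D|}$ set of primes, together with an ad hoc $\ell$-separation) you left somewhat heuristic and which would require further argument to make rigorous.
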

\begin{proof}
Suppose that $g \in \CFDNp$ which we can express as
$$g = \sum_{i \in I} \alpha_i \; g_i |_{k-1} \sigma_{\ell_i}$$
with similar meaning for $I, \alpha_i$; for example, $g_i$'s are primitive forms in $\CF_{k-1}(Dm_i, \chi)$ and $m_i \ell_i \dv N$. Following Ikeda, for each subset $Q \subset Q_D$, we consider the form
$$g' = \sum \alpha_i \chi_Q(-\ell_i) (g_i)_Q | \sigma_{\ell_i}$$
and observe that $g - g'$ has its $n$-th Fourier coefficient vanishing for all $(n, D_Q) = 1$ (i.e. $q \nmid n$ for all $q \in Q$; equivalently, $n_Q = 1$). To see that, one has
\footnote{Here, we take a convention that $a_g(r) = 0$ if $r \not\in \Z$.}
\begin{align*}
a_{g - g'}(n) &= \sum \alpha_i a_{g_i}(n/\ell_i) - \sum \alpha_i \chi_Q(-\ell_i) a_{(g_i)_Q}(n/\ell_i)\\
&= \sum_{\underset{l_i | n}{i}} \alpha_i [a_{g_i}(n/\ell_i) - \chi_Q(-\ell_i) a_{g_i}((n/\ell_i)' (n/\ell_i)'_Q) \cc{a_{g_i}((n/\ell_i)_Q)} \prod_{q \in Q} \chiu_q(n/\ell_i)] \\ & \hspace{8cm} \text{ by \thref{lm:Fourier_coefficient_f_Q}}\\
&= \sum_{\underset{l_i | n}{i}} \alpha_i [a_{g_i}(n/\ell_i) - \chi_Q(-\ell_i) a_{g_i}(\underbrace{(n/\ell_i)' (n/\ell_i)'_Q}_{n/\ell_i}) \cc{a_{g_i}(\underbrace{(n/\ell_i)_Q}_{1})} \prod_{q \in Q} \chi_q(n/\ell_i)] \\ & \hspace{8cm} \text{ by Lemma 15.1 of \cite{Ikeda2008}}\\
&= \sum_{\underset{l_i | n}{i}} \alpha_i [a_{g_i}(n/\ell_i) - \chi_Q(-n) a_{g_i}(n/\ell_i)]\\
&= \sum_{\underset{l_i | n}{i}} \alpha_i [1 - \chi_Q(-n)] a_{g_i}(n/\ell_i)\\
&= [1 - \chi_Q(-n)] \left(\sum \alpha_i  a_{g_i}(n/\ell_i)\right)\\
&= [1 - \chi_Q(-n)] a_g(n)
\end{align*}
Note the fact that $(n/\ell_i)_Q = 1$ due to $(n, Q) = n_Q = 1$ whence $(n/\ell_i)' (n/\ell_i)'_Q = n/\ell_i$. Now by assumption $a_g(n) = 0$ whenever $a_D(n) = 0$, we find that $a_{g - g'}(n) = 0$ if $a_D(n) = 0$. If $(n, Q) = 1$ and $a_D(n) \not= 0$ then we must have $\chi_q(-n) \not= -1$ for all $q | D$; in other words, either $\chi_q(-n) = 0$ or $\chi_q(-n) = 1$ for all $q | D$. We can't conclude that $\chi_q(-n) = 1$ for all $q | D$ but this should be the case for all $q \in Q$ by assumption $(n, Q) = 1$. And so the factor $1 - \chi_Q(-n) = 0$ and we still have $a_{g - g'}(n) = 0$.

We have shown that $g - g' \in \CFDN$ has vanishing $n$-th Fourier coefficient for all $(n, D_Q) = 1$. By Miyake \cite{Miyake1989} Theorem 4.6.8, we find that $g - g' = 0$. Thus, we proved that
$$g = \sum \alpha_i \chi_Q(-\ell_i) (g_i)_Q | \sigma_{\ell_i}$$
for every $Q \subset Q_D$. Summing both sides over all $Q$, we get
\begin{align*}
2^{|Q_D|} g &= \sum_Q \sum \alpha_i \chi_Q(-\ell_i) (g_i)_Q | \sigma_{\ell_i}\\
&= \sum \alpha_i g_i^*[\ell_i] | \sigma_{\ell_i}
\end{align*}
so $g$ is in the image of the map.
\end{proof}

\begin{remark}
We remark that when $D$ is prime and $N = 1$, we get Krieg's remark mentioned in the introduction. For then, $Q = \{D\}$, $\CFDN$ has a basis of primitive form and we have $f_\emptyset = f$ and $f_Q = f^\rho$ for any primitive form $f$. And we find that $f^* = f_\emptyset - f_Q = f - f^\rho$.
\end{remark}

\section{Hecke equivariant}
\label{sec:hecke_equivariant}

\let\ifAllProof\iffalse\let\ifFullProof\iftrue\let\ifPersonalNote\iffalse

\unless\ifdefined\IsMainDocument

\documentclass[12pt]{amsart}
\usepackage{amsmath,amssymb,iftex,cancel,xy,xcolor,verbatim}
\usepackage{theoremref}
\usepackage[normalem]{ulem}
\usepackage[margin=1in]{geometry}

\xyoption{all}

\newtheorem{thm}{Theorem}[section]
\newtheorem{lm}[thm]{Lemma}
\newtheorem{prop}[thm]{Proposition}
\newtheorem{cor}[thm]{Corollary}
\theoremstyle{definition}
\newtheorem{defn}[thm]{Definition}
\newtheorem{remark}[thm]{Remark}
\newtheorem{example}[thm]{Example}

\numberwithin{equation}{section}

\title{Hermitian Maass lift for General Level}
\date{}
\author{Lawrence (An Hoa) Vu}

\ifPersonalNote
	\ifPDFTeX
		\usepackage{mdframed}
		\newmdtheoremenv[
		    linewidth=2pt,
		    leftmargin=0pt,
		    innerleftmargin=0.4em,
		    rightmargin=0pt,
		    innerrightmargin=0.4em,
		    innertopmargin=-5pt,
		    innerbottommargin=3pt,
		    splittopskip=\topskip,
		    splitbottomskip=0.3\topskip,
		    skipabove=0.6\topsep
		]{pnote}{Personal note}
	\else
		\definecolor{OliveGreen}{rgb}{0,0.6,0}
		\newcounter{personaln}
		\newenvironment{pnote}{\color{OliveGreen}
		\stepcounter{personaln}
		\par\bigskip\noindent{\bfseries Personal note \arabic{personaln}:}
		\par\medskip}{\par\medskip}
	\fi
\else	
	\let\pnote\comment
	\let\endpnote\endcomment
\fi

\allowdisplaybreaks

\ifPDFTeX
\else
	\usepackage{yfonts}
	\renewcommand{\mathfrak}{\textfrak}
\fi

\newcommand{\LHS}{\text{LHS}}

\newcommand{\UHP}{\mathfrak{H}}
\newcommand{\C}{\mathbb{C}}
\newcommand{\R}{\mathbb{R}}
\newcommand{\Q}{\mathbb{Q}}
\newcommand{\Z}{\mathbb{Z}}

\newcommand{\term}[1]{\emph{#1}}

\newcommand{\Maass}{Maa\ss\ }

\newcommand{\Mat}[1]{M_{#1}}
\newcommand{\GGm}{\mathsf{G_m}}
\newcommand{\GL}[1]{\mathsf{GL}_{#1}}
\newcommand{\SL}[1]{\mathsf{SL}_{#1}}
\newcommand{\GU}[1]{\mathsf{GU}(#1,#1)}
\newcommand{\U}[1]{\mathsf{U}(#1,#1)}
\newcommand{\SU}[1]{\mathsf{SU}(#1,#1)}
\newcommand{\HerMat}[1]{\mathbb{S}_{#1}} \newcommand{\Res}{\text{Res}} 

\newcommand{\GS}{\mathbf{G}} \newcommand{\aDNu}{a_u} \newcommand{\aDNw}{a_w} 

\newcommand{\MF}{\mathfrak{M}}
\newcommand{\CF}{\mathfrak{S}}
\newcommand{\JF}{\mathfrak{J}}
\newcommand{\MFDp}{\MF_{k-1}^+(D, \chi)}
\newcommand{\MFDNp}{\MF_{k-1}^+(DN, \chi)}
\newcommand{\MFDN}{\MF_{k-1}(DN, \chi)}

\newcommand{\Matx}[1]{\begin{pmatrix} #1 \end{pmatrix}}

\newcommand{\ConjTran}[1]{#1^*} \newcommand{\ConjTranInv}[1]{\widehat{#1}} 

\newcommand{\epx}[1]{\mathsf{e}\left[#1\right]}
\newcommand{\sign}[1]{\mathsf{sign}\left[#1\right]}
\newcommand{\cc}[1]{\overline{#1}}

\newcommand{\OK}{\mathfrak{o}_K} \newcommand{\DK}{\mathfrak{d}_K} \newcommand{\Mabcd}{\Matx{a & b\\c & d}}
\newcommand{\Mxyzt}{\Matx{x & y\\z & t}}
\newcommand{\val}{\mathsf{val}}

\newcommand{\parens}[1]{\left( #1 \right)}
\newcommand{\braces}[1]{\left\{ #1 \right\}}
\newcommand{\suchthat}{\;\vline\;}
\newcommand{\tif}{\text{ if }}
\newcommand{\tand}{\text{ and }}
\newcommand{\twhere}{\text{ where }}
\newcommand{\totherwise}{\text{ otherwise }}
\newcommand{\tsince}{\text{ since }}
\newcommand{\teither}{\text{ either }}
\newcommand{\tforsome}{\text{ for some }}
\newcommand{\tor}{\text{ or }}

\newcommand{\Gm}{\Gamma}
\renewcommand{\a}{\alpha}
\renewcommand{\b}{\beta}
\newcommand{\gm}{\gamma}
\newcommand{\ld}{\lambda}
\newcommand{\eps}{\varepsilon}
\renewcommand{\theta}{\vartheta}
\renewcommand{\phi}{\varphi}

\newcommand{\dv}{\;|\;}
\newcommand{\ndv}{\;\nmid\;}
\newcommand{\smD}{\sqrt{-D}}
\newcommand{\sD}{\sqrt{D}}
\renewcommand{\gcd}{}
\newcommand{\jcb}[2]{(#1 \;|\; #2)}
\newcommand{\Tr}{\mathsf{Tr}}
\newcommand{\Nm}[1]{|#1|^2}  \newcommand{\Id}{\text{Id}}
\newcommand{\Gal}{\text{Gal}}

\renewcommand{\labelenumi}{(\roman{enumi})}

\begin{document}

\fi 
Now we establish Hecke equivariance of our Maass lift for certain Hecke operators.

First, we characterize the \Maass space, similar to Andrianov's characterization of \Maass space in \cite{Andrianov1979}. The following lemma is generalization of the Lemma in \cite{Krieg1991}, Section 7 for higher level \Maass space. Intuitively, it expresses the fact that the Fourier coefficients of the hermitian modular forms in the \Maass space only depends on $\epsilon(T)$ and $\det(T)$, which is evident in \thref{defn:Maass_space}.

\begin{lm}
Suppose that $F \in \MF_{k,2}(N)$ with Fourier expansion
$$F(Z) = \sum_{T \in S_2(\Q)} c_F(T) \; \epx{\Tr(T Z)}.$$ Then $F \in \MF_k^*(N)$ if and only if there exists a function $\beta : \Z^+ \times \Z_{\geq 0} \rightarrow \C$ such that
\begin{enumerate}
\item for all $T \in S_2(\Q), T \geq 0, T \not= 0$:
$$c_F(T) = \beta\parens{\epsilon(T), D \frac{\det(T)}{\epsilon(T)^2}}$$
\item for all $d \in \Z_{\geq 0}$, $q \in \Z^+$ and all primes $p \ndv N q$:
$$(1 - p^{k-1} W) \sum_{v = 0}^{\infty} \beta(p^v q, d) W^v = \sum_{v = 0}^{\infty} \beta(q, p^{2v} d) W^v$$
holds as a formal power series in $W$; in other words,
$\beta(p^v q, d) - p^{k-1} \beta(p^{v - 1} q, d) = \beta(q, d p^{2v})$
for all $v \geq 0$.
\item $\beta(u, v) = \beta(1, v u^2)$ for all $u \dv N^\infty$ i.e. if every prime divisor of $u$ divides $N$.
\end{enumerate}
\thlabel{lm:alt_characterization_Maass_space}
\end{lm}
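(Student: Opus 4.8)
The plan is to make the correspondence between a Maass form $F$ --- equivalently its function $\alpha_F$ from \thref{defn:Maass_space} --- and a function $\beta$ satisfying (i)--(iii) completely explicit, and then to verify the three conditions by elementary divisor bookkeeping. Two standing conventions: for $0 \ne T \geq 0$ in $S_2(\Z)$ the matrix $T/\epsilon(T)$ again lies in $S_2(\Z)$, so $D\det(T)/\epsilon(T)^2 = D\det\big(T/\epsilon(T)\big)$ is a non-negative integer, which guarantees that every argument of $\alpha_F$ and $\beta$ written below is a genuine element of $\Z_{\geq 0}$; and for a positive integer $q$ I write $q = q_0 q_1$ with $q_0$ the prime-to-$N$ part of $q$ and $q_1 = q/q_0$ (so $q_1$ divides a power of $N$), noting the elementary identity $\{\,d \in \Z^+ : d \mid \epsilon(T),\; \gcd(d,N)=1\,\} = \{\,d : d \mid \epsilon(T)_0\,\}$.

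For the direction $F \in \MF_k^*(N) \Rightarrow$ existence of $\beta$, given $\alpha_F$ I would put
$$\beta(q, n) := \sum_{e \mid q_0} e^{k-1}\,\alpha_F\big(n\,q_1^2\,(q_0/e)^2\big).$$
Condition (i) then follows on taking $n = D\det(T)/\epsilon(T)^2$ and observing that $n\,\epsilon(T)_1^2(\epsilon(T)_0/e)^2 = D\det(T)/e^2$ for $e \mid \epsilon(T)_0$; condition (iii) is immediate since $u$ dividing a power of $N$ forces $u_0 = 1$, whence $\beta(u,n) = \alpha_F(n u^2) = \beta(1,n u^2)$; and condition (ii) is a telescoping identity: for $p \nmid Nq$ the prime-to-$N$ part of $p^v q$ is $p^v q_0$ with $N$-part still $q_1$, so writing a divisor of $p^v q_0$ as $p^i e'$ with $e' \mid q_0$ one checks that $\beta(p^v q, n) - p^{k-1}\beta(p^{v-1}q, n)$ collapses exactly to $\beta(q, p^{2v}n)$, i.e. to the coefficientwise form of the claimed power-series identity in $W$.

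For the converse, given $\beta$ satisfying (i)--(iii) I would set $\alpha_F(n) := \beta(1,n)$ and reconstruct the Maass shape of the Fourier coefficients. Unwinding the recursion (ii) over the primes dividing $\epsilon(T)_0$ --- at each stage the prime being peeled off divides neither $N$ nor the base that remains, so (ii) is applicable --- yields $\beta(\epsilon(T), n) = \sum_{d \mid \epsilon(T)_0} d^{k-1}\,\beta\big(\epsilon(T)_1,\,(\epsilon(T)_0/d)^2 n\big)$; then (iii) with $u = \epsilon(T)_1$ converts $\beta(\epsilon(T)_1, m)$ into $\alpha_F(m\,\epsilon(T)_1^2)$; and finally (i), together with $(\epsilon(T)_0/d)^2 \cdot \big(D\det(T)/\epsilon(T)^2\big) \cdot \epsilon(T)_1^2 = D\det(T)/d^2$, delivers $c_F(T) = \sum_{d \mid \epsilon(T),\; \gcd(d,N)=1} d^{k-1}\alpha_F\big(D\det(T)/d^2\big)$, i.e. $F \in \MF_k^*(N)$.

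The whole argument is elementary; the only step requiring genuine care is keeping the two ``halves'' of $\epsilon(T)$ apart --- its prime-to-$N$ part, governed by the Hecke-type recursion (ii), and its $N$-part, governed by (iii) --- and checking that during the inductive unwinding of (ii) the hypothesis $p \nmid Nq$ is met at every step. The sole non-formal input is the (elementary) fact that $T/\epsilon(T) \in S_2(\Z)$, which is what makes all the intermediate quantities bona fide non-negative integers.
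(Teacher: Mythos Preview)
Your proof is correct and follows essentially the same approach as the paper: define $\beta(q,n)=\sum_{e\mid q,\ (e,N)=1} e^{k-1}\alpha_F\big(n(q/e)^2\big)$ for the forward direction (your formula, after collapsing $q_1^2(q_0/e)^2=(q/e)^2$, coincides with the paper's), and for the converse set $\alpha_F(n)=\beta(1,n)$ and prove $\beta(u,v)=\sum_{d\mid u,\ (d,N)=1} d^{k-1}\beta\big(1,v(u/d)^2\big)$ by induction on the prime factorization of the prime-to-$N$ part, using (ii) to peel off one prime at a time and (iii) to handle the $N$-part. Your explicit decomposition $q=q_0q_1$ makes the bookkeeping slightly cleaner than in the paper, but the argument is the same.
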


\begin{pnote}
\begin{proof}

Let $F$ be a form in the \Maass space and let $\alpha_F$ be the function attached to $F$ as in \thref{defn:Maass_space}. To satisfy (i), we define
$$\beta(u, v) := \sum_{\underset{d \dv u, (d, N) = 1}{d \in \Z^+}} d^{k-1} \alpha_F\parens{v (\frac{u}{d})^2}$$
and it is easy to check that (ii) and (iii) holds.

For the other direction, suppose that $\beta$ exists and we construct the appropriate function $\alpha_F$. Notice that if $T \in S_2(\Q)$ is such that $\epsilon(T) = 1$ then we expect $c_F(T) = \beta(1, D_K \det(T)) = \alpha_F(D_K \det(T))$ for such $\alpha_F$ which leads to the natural definition
$$\alpha_F(n) := \beta(1, n)$$
for all $n \in \Z^+$. We verify that with such $\alpha_F$, $F$ is a form in the \Maass space; equivalently,
$$\beta\parens{\epsilon(T), D \frac{\det(T)}{\epsilon(T)^2}} = \sum_{\underset{d \dv \epsilon(T), (d, N) = 1}{d \in \Z^+}} d^{k-1} \beta\parens{1, D \frac{\det(T)}{d^2}}$$
for all $T$. We prove a more general statement
\begin{align}
\beta(u, v) = \sum_{\underset{d \dv u, (d, N) = 1}{d \in \Z^+}} d^{k-1} \beta\parens{1, v(\frac{u}{d})^2}
\label{eq:beta_function_identity}
\end{align}
by induction on the prime decomposition of $u$. If $u \dv N^\infty$ then the equation becomes $\beta(u, v) = \beta(1, v u^2)$ which is true by (iii). If a prime $p \ndv N$ then we want to show that \eqref{eq:beta_function_identity} holds for the pair $(p^w u, v)$ should it hold for the pair $(p^h u, v)$ with $p \ndv u$ and $h < w$. By (ii), we have
\begin{align*}
\beta(p^w u, v) &= p^{k-1} \beta(p^{w-1} u, v) + \beta(u, v p^{2w})\\
&= p^{k-1} \sum_{\underset{d \dv p^{w-1} u, (d, N) = 1}{d \in \Z^+}} d^{k-1} \beta\parens{1, v(\frac{p^{w-1} u}{d})^2} + \sum_{\underset{d \dv u, (d, N) = 1}{d \in \Z^+}} d^{k-1} \beta\parens{1, v p^{2w} (\frac{u}{d})^2} \\&\hspace{4cm} \text{ by induction hypothesis}\\
&= \sum_{\underset{d \dv p^{w-1} u, (d, N) = 1}{d \in \Z^+}} (pd)^{k-1} \beta\parens{1, v(\frac{p^w u}{dp})^2} + \sum_{\underset{d \dv u, (d, N) = 1}{d \in \Z^+}} d^{k-1} \beta\parens{1, v (\frac{p^w u}{d})^2}\\
&= \sum_{\underset{d \dv p^w u, p \dv d, (d, N) = 1}{d \in \Z^+}} d^{k-1} \beta\parens{1, v(\frac{p^w u}{d})^2} + \sum_{\underset{d \dv p^w u, p \ndv d, (d, N) = 1}{d \in \Z^+}} d^{k-1} \beta\parens{1, v (\frac{p^w u}{d})^2}\\
&= \sum_{\underset{d \dv p^w u, (d, N) = 1}{d \in \Z^+}} d^{k-1} \beta\parens{1, v(\frac{p^w u}{d})^2}
\end{align*}
which is equation \eqref{eq:beta_function_identity} for the pair $(p^w u, v)$.
\end{proof}
\end{pnote}

The main utility of the lemma is the fact that the function $\beta$ relates directly to the Fourier coefficients of the form.

Now we recall the Hecke operator for Hermitian modular forms. The argument in Lemma 3.1 and Lemma 3.2 of \cite{Andrianov1995} can be used to show that the commeasurator of $\Gamma_{0,n}(N)$ in $\GU{n}(\Q)^+$ is the whole group $\GU{n}(\Q)^+$. (This was also claimed in \cite{Krieg1991}, Section 1.) Modifying Andrianov's definition for Siegel modular forms, we define the following sub-semigroup of $\GU{n}(\Q)^+$
$$\Delta_{0,n}(N) := \Gamma_{0,n}(N) \braces{g \in \GU{n}(\Q)^+ \cap \GL{2n}(K_{(N)}) \suchthat g \equiv \Matx{I_n & 0 \\ 0 & \mu(g) I_n} \bmod N} \Gamma_{0,n}(N)$$
where $K_{(N)} = \bigcap_{\mathfrak{p} \dv N} (\OK)_{\mathfrak{p}} \subset K$ denotes the subring of our imaginary quadratic field $K$ consisting of $N$-integral elements.

Then $(\Delta_{0,n}(N), \Gamma_{0,n}(N))$ is a Shimura pair and we have the associated $\Z$-ring of Hecke operators as in \cite{Miyake1989}, Chapter 2 or \cite{Shimura1971}, Chapter 3.
\begin{pnote}
I expect this to be the correct analogue for Hermitian modular forms. That is, this Hecke algebra should be commutative and give us the correct $L$-function theory as in Gritsenko.
\end{pnote}
For a rational prime $p$, let us consider the Hecke operator
$$T_p = T_p(N) := \Gamma_{0,2}(N) \Matx{I_2 & 0 \\ 0 & p I_2} \Gamma_{0,2}(N).$$

In the proof of Theorem in Section 7 of \cite{Krieg1991}, Krieg showed that when $N = 1$ and $p$ is inert in $K$, one has the decomposition
\begin{align}
\Gamma_{0,2}(1) \Matx{I_2 & 0 \\ 0 & p I_2} \Gamma_{0,2}(1) &= \bigsqcup_{(D, B) \in R} \Gm_{0,2}(1) \Matx{p \ConjTranInv{D} & B \\ 0 & D}
\label{eq:CosetDecompositionLevel1}
\end{align}
where $\ConjTranInv{D} := (\cc{D^t})^{-1} = (D^*)^{-1}$ and $R$ is the finite set of following pair of matrices $(D, B)$ where
\begin{itemize}
\item $D = I_2, B = 0$;
\item $D = p I_2, B = \Matx{\gamma & b \\ \cc{b} & \delta}$ where $b \in \OK/p\OK$ and $\gm, \delta = 1, 2, ..., p$;
\item $D = \Matx{p & 0 \\ 0 & 1}, B = \Matx{\gamma & 0 \\ 0 & 0}$ where $\gm = 1, 2, ..., p$; and
\item $D = \Matx{1 & d \\ 0 & p}, B = \Matx{0 & 0 \\ 0 & \gamma}$ where $d \in \OK/p\OK$ and $\gm = 1, 2, ..., p$.
\end{itemize}
We extend his result to higher level.

\begin{pnote}
Let $(\Gamma, \Delta)$ be a Shimura pair. To find a collection of $\beta_i \in \Gm$ such that we have the decomposition
$$\Gm \alpha \Gm = \bigsqcup_i \Gm \alpha \beta_i,$$
observe that
$\Gm \alpha \beta = \Gm \alpha \beta' \iff \alpha \beta' \beta^{-1} \alpha^{-1} \in \Gm \iff \beta' \beta^{-1} \in \alpha^{-1} \Gm \alpha \cap \Gm \iff \beta' \in (\alpha^{-1} \Gm \alpha \cap \Gm) \beta \iff \beta'$ and $\beta$ are in the same class of \emph{right} cosets of $(\alpha^{-1} \Gm \alpha \cap \Gm) \backslash \Gm$ (which is finite by definition of a Shimura pair i.e. $\alpha \in \Delta$). Thus, computing the decomposition of a double coset $\Gm \alpha \Gm$ reduces to computing the right coset representatives of $(\alpha^{-1} \Gm \alpha \cap \Gm) \backslash \Gm$. (This is Proposition 3.1 in \cite{Shimura1971}.) Note that one only care about coset $\Gm \alpha \beta$, not the actual $\beta$.

\begin{example}
In the case for elliptic modular form where $\Gm = \Gm_0(N)$, the classical operator
$$T(p) = \Gm \Matx{1 & 0 \\ 0 & p} \Gm$$
for $p \ndv N$ can be computed via the above method. Here, $\alpha = \Matx{1 & 0 \\ 0 & p}$ and from
$$\alpha^{-1} \Matx{a & b \\ c & d} \alpha = \Matx{1 & 0 \\ 0 & 1/p} \Matx{a & b \\ c & d} \Matx{1 & 0 \\ 0 & p} = \Matx{a & pb \\ c/p & d} \in \Gm \iff Np | c,$$
so one easily sees that $K = \Gm \cap \alpha^{-1} \Gm \alpha$ is precisely $\alpha^{-1} \Gm_0(Np) \alpha$ by our assumption $p \ndv N$; more concretely,
$$K = \braces{\Matx{a & pb \\ Nc & d} \in \SL{2}(\Z)}$$

Now observe that $\Matx{a & b \\ Nc & d}, \Matx{u & v \\ Nt & w} \in \Gm$ gives the same coset representative in $K \backslash \Gm$ if and only if the product
$$\Matx{a & b \\ Nc & d} \Matx{u & v \\ Nt & w}^{-1} = \Matx{a & b \\ Nc & d} \Matx{w & -v \\ -Nt & u} =  \Matx{* & -av+bu\\N * & *}$$
has the upper right entry $-av + bu$ divisible by $p$. Thus, if $p \ndv a$ then $\Matx{a & b \\ Nc & d} \sim \Matx{1 & a^{-1} b \\ 0 & 1}$; otherwise, if $p \dv a$ then $\Matx{a & b \\ Nc & d} \sim \Matx{p & \lambda \\ N & \xi}$ for any $\lambda, \xi \in \Z$ such that $p\xi - N \lambda = 1$ since in that case $-av + bu \equiv 0 \bmod p \iff bu \equiv 0 \bmod p \iff u \equiv 0 \bmod p$. Thus, we find that $K \backslash \Gm$ has $p + 1$ representatives given by the matrix $\Matx{p & \lambda \\ N & \xi}$ and $p$ matrices of the form $\Matx{1 & a \\ 0 & 1}$ where $a \in \Z/p\Z$. So we come to the decomposition
\begin{align*}
\Gm \Matx{1 & 0 \\ 0 & p} \Gm &= \Gm \alpha \Matx{p & \lambda \\ N & \xi} \sqcup \bigsqcup_{a = 0}^{p - 1} \Gm \alpha \Matx{1 & a \\ 0 & 1}\\
&= \Gm \Matx{p & \lambda \\ pN & p\xi} \sqcup \bigsqcup_{a = 0}^{p - 1} \Gm \Matx{1 & a \\ 0 & p}\\
&= \Gm \Matx{p & 0 \\0 & 1} \sqcup \bigsqcup_{a = 0}^{p - 1} \Gm \Matx{1 & a \\ 0 & p}\\
\end{align*}
for we can replace $\Matx{p & \lambda \\ pN & p\xi}$ by
$$\Matx{p \xi & -\lambda \\-N & 1} \Matx{p & \lambda \\ pN & p\xi} = \Matx{p & 0 \\0 & 1}$$
And this is what one learns in \cite{Miyake1989}. Note that the representatives are the same for all level but this fact requires some computation.

\end{example}

Let us now move on to the Hermitian modular form case where $\Gm = \Gm_{0,n}(N)$ and $\alpha = \Matx{I_n & 0 \\ 0 & p I_n}$. Just like the example above, one has
\begin{align*}
\alpha^{-1} \Matx{A & B \\ C & D} \alpha &= \Matx{I_n & 0 \\ 0 & \frac{1}{p} I_n} \Matx{A & B \\ C & D} \Matx{I_n & 0 \\ 0 & p I_n}\\
&= \Matx{A & p B \\ \frac{1}{p} C & D}
\end{align*}
and should one recall that
$$\GU{n} = \braces{\Matx{A & B \\ C & D} \suchthat A^* C = C^* A, B^* D = D^* B, D^* A - B^* C = \mu I_n}$$
explicitly then it is easy to see that $\Matx{A & p B \\ \frac{1}{p} C & D} \in \U{n}(\Q)$ whenever $\Matx{A & B \\ C & D} \in \U{n}(\Q)$ and that
$$K := \Gm \cap \alpha^{-1} \Gm \alpha = \alpha^{-1} \Gm_{0,n}(Np) \alpha$$
if $p \ndv N$. So finding the decomposition of the Hecke operator $T_p$ reduces to finding right coset representatives of $\alpha^{-1} \Gm_{0,n}(Np) \alpha \backslash \Gm_{0,n}(N)$.
One can probably employ a similar method as in the elliptic case: The matrices $\Matx{A & B \\ NC & D}, \Matx{U & V \\ NT & W} \in \Gm$ are in the same coset if and only if\footnote{
Here, I exploit the definition of $g \in \U{n}$ means $g^* J g = J$ and since $J^2 = -I$, we find that $-J g^* J g = -J^2 = I$ which shows that $g^{-1} = -J g^* J$. Hence, we have $\Matx{U & V \\ NT & W}^{-1} = \Matx{& I_n \\ -I_n & } \Matx{U^* & N T^* \\ V^* & W^*} \Matx{ & -I_n \\ I_n & } = \Matx{V^* & W^* \\ -U^* & -N T^*} \Matx{ & -I_n \\ I_n & } = \Matx{W^* & -V^* \\ -N T^* & U^*}$. It is easy to check that $\Matx{W^* & -V^* \\ -N T^* & U^*} \Matx{U & V \\ NT & W} = I_{2n}$. Note that the inverse is pretty much like inverse of $2 \times 2$ matrices; except in block and with the star.
}
\begin{align*}
\Matx{A & B \\ NC & D} \Matx{U & V \\ NT & W}^{-1} &= \Matx{A & B \\ NC & D} \Matx{W^* & -V^* \\ -N T^* & U^*}\\
&= \Matx{? & - AV^* + B U^* \\ N ? & ?}
\end{align*}
has its upper right block $- A V^* + B U^*$ divisible by $p$. Note that the latter condition \emph{does not depend on the level} so if $\rho, \rho' \in \Gm_{0, n}(N)$ are in different classes in $\alpha^{-1} \Gm_{0,n}(p) \alpha \backslash \Gm_{0,n}(1)$ then they are also in different classes in $\alpha^{-1} \Gm_{0,n}(Np) \alpha \backslash \Gm_{0,n}(N)$.
Based on this, we can probably find canonical representatives similar to the elliptic case. For instance, if $A$ is invertible mod $p$ then we can take $U = W = I_n$ and $V^*$ to be any lift of $A^{-1} B \bmod p \in M_n(\OK/p\OK)$ to $M_n(\OK)$; note that $\OK/p\OK$ is the field of $p^2$ elements by assumption that $p$ is inert in $K$. In general, we distinguish the classes based on the rank of $A$ as matrix $M_n(\OK/p\OK)$. Suppose that $A \bmod p$ is of rank $0 \leq r \leq n$. Then we can find a matrix $P \in \GL{n}(\OK/p\OK)$ such that $P^{-1} A P \equiv \Matx{I_r & 0 \\ 0 & 0} \bmod p$. Lift $P$ to $\GL{n}(\OK)$. Then we find that
\begin{align*}
A V^* \equiv B U^* \bmod p &\iff P^{-1} A P P^{-1} V^* \equiv B U^* \bmod p\\
&\iff \Matx{I_r & 0 \\ 0 & 0} P^{-1} V^* \equiv B U^* \bmod p\\
&\iff \text{the left top } r \times r \text{ block of } P^{-1} V^* \equiv B U^* \bmod p\\
&\qquad \text{ and the bottom right block of } B U^* \equiv 0 \bmod p
\end{align*}

\end{pnote}

\begin{lm}
Suppose that $p \ndv N$ is inert in $K$ and let $\xi, \lambda \in \Z$ be such that $p \xi - N \lambda = 1$. Then the set
$$R_N := \left\{
\begin{array}{r}
\Matx{
\xi p I_2 & \lambda I_2\\
N I_2 & I_2}, \Matx{
1 & 0 & \gamma & b\\
0 & 1 & \cc{b} & \delta\\
N & 0 & 1 + N \gamma & N b\\
0 & N & N\cc{b} & 1 + N\delta
}, \Matx{
1 & 0 & \gamma & 0\\
0 & \xi p & 0 & \lambda\\
0 & 0 & 1 & 0\\
0 & N & 0 & 1
}, \Matx{
\xi p & 0 & \lambda & \lambda d\\
-\cc{d} & 1 & 0 & \gamma\\
N & 0 & 1 & d\\
0 & 0 & 0 & 1
}\\
\\
\text{ where } \gamma, \delta = 1, 2, ..., p \text{ and } b, d \in \OK/p\OK
\end{array}
\right\}$$
is a complete set of representatives for
$$\underbrace{\Gm_{0,2}(N) \cap \alpha^{-1} \Gm_{0,2}(N) \alpha}_{\alpha^{-1} \Gm_{0,2}(Np) \alpha} \backslash \Gm_{0,2}(N) \qquad \text{ where } \qquad \alpha = \Matx{I_2 & 0 \\ 0 & p I_2}.$$
\thlabel{lm:right_coset_representatives}
\end{lm}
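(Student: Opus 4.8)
The plan is to mirror Krieg's level-one computation while tracking the extra congruence modulo $N$. First I would isolate the two structural facts already noted before the lemma. Writing $K := \Gamma_{0,2}(N)\cap\alpha^{-1}\Gamma_{0,2}(N)\alpha$, one has (using $p\nmid N$) that $K=\alpha^{-1}\Gamma_{0,2}(Np)\alpha=\{g=\Matx{A & B\\ NC & D}\in\U{2}(\Z): B\equiv 0\bmod p\OK\}$; and for $\rho=\Matx{A & B\\ \ast & \ast}$, $\rho'=\Matx{U & V\\ \ast & \ast}$ in $\Gamma_{0,2}(N)$ the cosets $K\rho$ and $K\rho'$ coincide iff the upper-right block $BU^\ast-AV^\ast$ of $\rho\rho'^{-1}$ is divisible by $p$ — a condition depending only on $A,B,U,V$ modulo $p$. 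Since $p$ is inert, $\OK/p\OK\cong\mathbb{F}_{p^2}$ and complex conjugation becomes the Frobenius $x\mapsto x^p$ there; I will use this identification freely.

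The whole bookkeeping rests on one invariant: if $K\rho=K\rho'$ then $\operatorname{rank}(A\bmod p)=\operatorname{rank}(U\bmod p)$ in $M_2(\mathbb{F}_{p^2})$. Indeed $\rho'=\kappa\rho$ with $\kappa=\Matx{P & Q\\ NR & S}\in K$, so $Q\equiv 0\bmod p$, and the unitary relation $S^\ast P-NQ^\ast R=I_2$ forces $P$ to be invertible modulo $p$; hence $U\equiv PA\bmod p$ has the same rank as $A$. Accordingly I partition $R_N$: the single matrix with top-left block $\xi pI_2$ has rank $0$, the $p^4$ matrices with top-left block $I_2$ have rank $2$, and the remaining $p+p^3$ matrices have rank $1$. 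Containment $R_N\subset\Gamma_{0,2}(N)$ is a finite check: each matrix visibly satisfies $\ConjTran{g}Jg=J$ (the Hermitian shape of the $B$-blocks is built in) and has lower-left block in $N\OK$, where the relation $p\xi-N\lambda=1$ is exactly what makes the entries involving $\xi p$ and $\lambda$ fit. Pairwise distinctness then follows: the rank invariant separates the three strata, and inside a stratum one computes $BU^\ast-AV^\ast\bmod p$ for two representatives and sees that $p$-divisibility forces the parameters ($b,\gamma,\delta$ for rank $2$; $d,\gamma$ for rank $1$) to agree modulo $p$, hence to be equal in the prescribed ranges. This exhibits $|R_N|=1+p^4+p+p^3$ pairwise-inequivalent cosets, matching Krieg's count $|R|$ for $N=1$.

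It remains to prove exhaustion: every $\rho=\Matx{A & B\\ NC & D}\in\Gamma_{0,2}(N)$ is $K$-equivalent to a member of $R_N$. The explicit route follows Krieg: if $\operatorname{rank}(A\bmod p)=2$, left multiplication by a $\kappa\in K$ with $P\equiv A^{-1}\bmod p$ normalizes $A$ to $I_2$, and the residual datum $A^{-1}B\bmod p$ is Hermitian because the identity $gJ\ConjTran{g}=J$ (a consequence of $\ConjTran{g}Jg=J$) gives $AB^\ast=BA^\ast$, so it matches one of the $p^4$ matrices; if $\operatorname{rank}(A\bmod p)=0$ the divisibility $p\mid BU^\ast-AV^\ast$ against the rank-$0$ representative is automatic; and if $\operatorname{rank}(A\bmod p)=1$, left multiplication by $K$ moves the row space of $A\bmod p$, a point of $\mathbb{P}^1(\mathbb{F}_{p^2})$, into one of the two standard positions appearing in $R_N$, after which $p$ choices of a remaining $B$-parameter exhaust the stratum. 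Alternatively one avoids the normalization entirely by a counting argument: reduction modulo $p$ carries $\Gamma_{0,2}(N)$ onto a subgroup of the finite unitary group over $\OK/p\OK$ that does not depend on $N$ (strong approximation for $\SU{2}$, using $p\nmid N$ and that $p$ is prime to the discriminant), and $K$ is the preimage of its intersection with the stabilizer of a maximal isotropic subspace; hence $[\Gamma_{0,2}(N):K]$ is independent of $N$, so equals $[\Gamma_{0,2}(1):K_1]=|R|$ by Krieg, and combined with the previous paragraph this forces $R_N$ to be complete. I expect this exhaustion step — making either the level-$N$ normalization or the surjectivity of reduction modulo $p$ precise — to be the main obstacle; the rest is routine linear algebra over $\mathbb{F}_{p^2}$.
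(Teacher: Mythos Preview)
Your proposal is correct, and the distinctness half matches the paper's exactly: both rest on the observation that whether $K\rho=K\rho'$ depends only on $BU^\ast-AV^\ast\bmod p$, hence is independent of $N$. The paper, however, handles exhaustion much more cheaply than either of your two routes. Rather than redoing Krieg's rank-based normalization at level $N$, or invoking strong approximation to show $[\Gamma_{0,2}(N):K]$ is independent of $N$, the paper first notes (by construction) that $R_N\subset\Gamma_{0,2}(N)$ is already a complete set of representatives for the level-one quotient $\alpha^{-1}\Gamma_{0,2}(p)\alpha\backslash\Gamma_{0,2}(1)$. Then for any $\rho\in\Gamma_{0,2}(N)$ there is some $\rho'\in R_N$ with $\rho=\alpha^{-1}\rho''\alpha\,\rho'$ for $\rho''\in\Gamma_{0,2}(p)$; but $\alpha^{-1}\rho''\alpha=\rho(\rho')^{-1}\in\Gamma_{0,2}(N)$ since both $\rho,\rho'$ lie there, and one checks directly that $\alpha\Gamma_{0,2}(N)\alpha^{-1}\cap\Gamma_{0,2}(p)=\Gamma_{0,2}(Np)$ when $p\nmid N$. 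So $\rho''\in\Gamma_{0,2}(Np)$ and $\rho\in K\rho'$. This bootstrap-from-level-one argument is elementary and avoids both the case analysis and the appeal to strong approximation that you flagged as the main obstacle; your explicit route would of course also work, at the cost of essentially recomputing Krieg's decomposition.
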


\begin{proof}
It follows from \eqref{eq:CosetDecompositionLevel1} and Shimura \cite{Shimura1971}, Proposition 3.1 that $R_N$ is a complete set of representatives for $\alpha^{-1} \Gm_{0,2}(p) \alpha \backslash \Gm_{0,2}(1)$.

\begin{pnote}
Here is the details of how I find $R_N$. Krieg's result  cited above gives
\begin{align*}
\Gm_{0,2}(1) \; \alpha \; \Gm_{0,2}(1) = \bigsqcup_{(D, B) \in S} \Gm_{0,2}(1) \; \Matx{p \ConjTranInv{D} & B \\ 0 & D}
\end{align*}
where the set $S$ consists of the following pairs of matrices:
\begin{itemize}
\item $D = I, B = 0$;
\item $D = pI, B = \Matx{\gamma & b \\ \cc{b} & \delta}$ where $b \in \OK/p\OK$ and $\gm, \delta = 1, 2, ..., p$;
\item $D = \Matx{p & 0 \\ 0 & 1}, B = \Matx{\gamma & 0 \\ 0 & 0}$ where $\gm = 1, 2, ..., p$; and
\item $D = \Matx{1 & d \\ 0 & p}, B = \Matx{0 & 0 \\ 0 & \gamma}$ where $d \in \OK/p\OK$ and $\gm = 1, 2, ..., p$.
\end{itemize}
Recall our notation that $\ConjTran{D} = \cc{D}^t$ is the conjugate transposed of $D$ and let us denote by $\ConjTranInv{D} := (\ConjTran{D})^{-1} = \ConjTran{(D^{-1})}$.

From Krieg's computation, we get the representatives for $\alpha^{-1} \Gm_{0,2}(p) \alpha \backslash \Gm_{0,2}(1)$ by taking appropriate element $\Matx{U & V \\ T & W} \in \Gm_{0,2}(1)$ for each pair $(D, B)$ such that
$$\rho(B, D, U, V, W, T) := \alpha^{-1} \Matx{U & V \\ T & W} \Matx{p \ConjTranInv{D} & B \\ 0 & D} \in \Gm_{0,2}(1)$$
and the matrices $\rho(B, D, U, V, W, T)$ will be our representatives. One has
\begin{align*}
\rho(B, D, U, V, W, T) &= \Matx{I & \\ & \frac{1}{p} I} \Matx{U & V \\ T & W} \Matx{p \ConjTranInv{D} & B \\ 0 & D}\\
&= \Matx{I & \\ & \frac{1}{p} I} \Matx{U (p \ConjTranInv{D}) & UB + V D \\ T (p \ConjTranInv{D}) & TB + WD}\\
&= \Matx{U (p\ConjTranInv{D}) & UB + V D \\ T \ConjTranInv{D} & \frac{1}{p}(TB + WD)}
\end{align*}
and observes that
\begin{enumerate}
\item $\rho(...) \in \GU{2}(\Q)$ since it is the product of three matrices in $\GU{2}(\Q)$ and in fact, $\rho(...) \in \U{2}(\Q)$ since the similitude is a group homomorphism $\GU{2}(\Q) \rightarrow \Q^\times$ and it is clear that the three matrices $\alpha^{-1}$, $\Matx{U & V \\ T & W}$ and $\Matx{p \ConjTranInv{D} & B \\ 0 & D}$ have similitude factors $\frac{1}{p}, 1$ and $p$ respectively. Thus, as long as $\rho(...)$ is integral, it is in $\U{2}(\Z)$.

\item in all possibilities of $(D, B) \in S$, the matrices $p \ConjTranInv{D}$ and $B, D$ are integral; so the top row of $\rho$ (i.e. the matrices $U (p \ConjTranInv{D})$ and $U B + V D$) is integral.

\item $\ConjTranInv{D}$ only has power of $p$ in the denominator; so as long as $T \equiv 0 \bmod N$ i.e. $\Matx{U & V \\ T & W} \in \Gm_{0,2}(N)$ then $\rho(...) \in \Gm_{0,2}(N)$ if it is integral by our assumption that $p \ndv N$.
\end{enumerate}

Thus, for each cases of $(D, B) \in S$:
\begin{itemize}
\item $D = I, B = 0 \Rightarrow $ only need $\frac{1}{p} W \in M_2(\OK)$ so we take $T = N I, W = p I$, $U = \xi I$ and $V = \lambda I$ where $\lambda, \xi \in \Z$ such that $1 = p \xi - N \lambda$. The representative we get is
$$\Matx{\xi p I & \lambda I \\ N I & I}$$

\item $D = pI, B = \Matx{\gamma & b \\ \cc{b} & \delta} \Rightarrow \ConjTranInv{D} = \frac{1}{p} I$ and we need $\frac{1}{p} T$ and $\frac{1}{p} T B + W$ being integrals; which reduces to simply $T \equiv 0 \bmod p$. In this case, we can simply take $T = N p I$ and fill up the matrix with $U = W = I$ and $V = 0$. The representatives are
$$\Matx{I & B \\ N I & N B + I} = \Matx{
1 & 0 & \gamma & b\\
0 & 1 & \cc{b} & \delta\\
N & 0 & 1 + N \gamma & N b\\
0 & N & N\cc{b} & 1 + N\delta
}$$

\item $D = \Matx{p & 0 \\ 0 & 1}, B = \Matx{\gamma & 0 \\ 0 & 0} \Rightarrow \ConjTranInv{D} = \Matx{\frac{1}{p} & 0 \\ 0 & 1}$ so we need the first column of $T = (t_{ij})$ to be divisible by $p$ and
\begin{align*}
\frac{1}{p}(TB + WD) &= \frac{1}{p} \Matx{t_{11} & t_{12} \\ t_{21} & t_{22}} \Matx{\gamma & 0 \\ 0 & 0} + W \Matx{1 & 0 \\ 0 & \frac{1}{p}}\\
&= \Matx{\frac{1}{p} \gamma t_{11} & 0 \\ \frac{1}{p} \gamma t_{21} & 0} + \Matx{w_{11} & \frac{1}{p} w_{12} \\ w_{21} & \frac{1}{p} w_{22}}\\
&= \Matx{\frac{1}{p} \gamma t_{11} + w_{11} & \frac{1}{p} w_{12} \\ \frac{1}{p} \gamma t_{21} + w_{21} & \frac{1}{p} w_{22}}
\end{align*}
to be integral. So all we need is that $t_{11}, t_{21}, w_{12}, w_{22}$ are divisible by $p$. The matrix
$$\Matx{
1 & 0 & 0 & 0\\
0 & \xi & 0 & \lambda\\
0 & 0 & 1 & 0\\
0 & N & 0 & p
}$$
fits the bill. (Recall that $1 = p \xi - N \lambda$.) The corresponding representatives are
\begin{align*}
\rho &= \Matx{U (p\ConjTranInv{D}) & UB + V D \\ T \ConjTranInv{D} & \frac{1}{p}(TB + WD)}\\
&= \Matx{\Matx{1 & 0 \\ 0 & \xi} \Matx{1 & 0 \\ 0 & p} & \Matx{1 & 0 \\ 0 & \xi} \Matx{\gamma & 0 \\ 0 & 0} + \Matx{0 & 0 \\ 0 & \lambda} \Matx{p & 0 \\ 0 & 1} \\ \Matx{0 & 0 \\ 0 & N} \Matx{\frac{1}{p} & 0 \\ 0 & 1} & \frac{1}{p}(\Matx{0 & 0 \\ 0 & N} \Matx{\gamma & 0 \\ 0 & 0} + \Matx{1 & 0 \\ 0 & p} \Matx{p & 0 \\ 0 & 1})}\\
&= \Matx{
\Matx{1 & 0 \\ 0 & \xi p} & \Matx{\gamma & 0 \\ 0 & \lambda}\\
\Matx{0 & 0 \\ 0 & N} & I}\\
&= \Matx{
1 & 0 & \gamma & 0\\
0 & \xi p & 0 & \lambda\\
0 & 0 & 1 & 0\\
0 & N & 0 & 1
}
\end{align*}

\item $D = \Matx{1 & d \\ 0 & p}, B = \Matx{0 & 0 \\ 0 & \gamma} \Rightarrow \ConjTranInv{D} = \frac{1}{p} \Matx{p & 0 \\ -\cc{d} & 1}$ and we need \begin{align*}
T \frac{1}{p} \Matx{p & 0 \\ -\cc{d} & 1} &= \Matx{t_{11} & t_{12} \\ t_{21} & t_{22}} \Matx{1 & 0 \\ -\cc{d}/p & 1/p}\\
&= \Matx{t_{11} - \frac{t_{12} \cc{d}}{p} & \frac{t_{12}}{p} \\ t_{21} - \frac{t_{22} \cc{d}}{p} & \frac{t_{22}}{p}}
\end{align*}
as well as
\begin{align*}
\frac{1}{p} (T\Matx{0 & 0 \\ 0 & \gamma} + W\Matx{1 & d \\ 0 & p}) &= \frac{1}{p} \Matx{0 & \gamma t_{12} \\ 0 & \gamma t_{22}} + \frac{1}{p} \Matx{w_{11} & w_{11}d + w_{12}p \\ w_{21} & w_{21}d + w_{22}p}\\
&= \frac{1}{p} \Matx{w_{11} & \gamma t_{12} + w_{11}d + w_{12}p \\ w_{21} & \gamma t_{22} + w_{21}d + w_{22}p}
\end{align*}
to be integral; which is evidently equivalent to $t_{12}, t_{22}, w_{11}, w_{21}$ is divisible by $p$. The matrix
$$\Matx{
\xi & 0 & \lambda & 0\\
0 & 1 & 0 & 0\\
N & 0 & p & 0\\
0 & 0 & 0 & 1
}$$
should work and we get the remaining set of representatives
\begin{align*}
\rho &= \Matx{U (p\ConjTranInv{D}) & UB + V D \\ T \ConjTranInv{D} & \frac{1}{p}(TB + WD)}\\
&= \Matx{\Matx{\xi & 0 \\ 0 & 1} \Matx{p & 0 \\ -\cc{d} & 1} & \Matx{\xi & 0 \\ 0 & 1} \Matx{0 & 0 \\ 0 & \gamma} + \Matx{\lambda & 0 \\ 0 & 0} \Matx{1 & d \\ 0 & p} \\ \Matx{N & 0 \\ 0 & 0} \frac{1}{p} \Matx{p & 0 \\ -\cc{d} & 1} & \frac{1}{p}(\Matx{N & 0 \\ 0 & 0} \Matx{0 & 0 \\ 0 & \gamma} + \Matx{p & 0 \\ 0 & 1} \Matx{1 & d \\ 0 & p})}\\
&= \Matx{\Matx{\xi p & 0 \\ -\cc{d} & 1} & \Matx{\lambda & \lambda d \\ 0 & \gamma} \\ \Matx{N & 0 \\ 0 & 0} & \Matx{1 & d \\ 0 & 1}}\\
&= \Matx{
\xi p & 0 & \lambda & \lambda d\\
-\cc{d} & 1 & 0 & \gamma\\
N & 0 & 1 & d\\
0 & 0 & 0 & 1
}
\end{align*}
\end{itemize}
\end{pnote}

These representatives for $\alpha^{-1} \Gm_{0,2}(p) \alpha \backslash \Gm_{0,2}(1)$ must be thus distinct classes in $\alpha^{-1} \Gm_{0,2}(Np) \alpha \backslash \Gm_{0,2}(N)$ as well since the condition for two matrices to be in the same class is independent of the level.

It remains to see that there are no more classes. In other words, any matrix $\Matx{A & B \\ NC & D} \in \Gm_{0,2}(N)$ lies in one of the right $\alpha^{-1} \Gm_{0,2}(Np) \alpha$-coset of some matrix in $R_N$: Let $\rho \in \Gm_{0,2}(N)$ be arbitrary and let $\rho' \in R_N$ be the element such that $\rho$ lies in the right coset $(\alpha^{-1} \Gm_{0,2}(p) \alpha) \; \rho'$. (Such an element exists because $R_N$ is a complete set of representatives for $(\alpha^{-1} \Gm_{0,2}(p) \alpha) \backslash \Gm_{0,2}(1)$ that we extracted from Krieg.) Then there exists a matrix $\rho'' \in \Gm_{0,2}(p)$ such that $\rho = \alpha^{-1} \rho'' \alpha \rho'$. We then have $\alpha^{-1} \rho'' \alpha = \rho (\rho')^{-1} \in \Gm_{0,2}(N)$ so that $\rho'' \in \alpha \Gm_{0,2}(N) \alpha^{-1}$. From a simple observation that
\begin{align*}
\alpha \Matx{A & B \\ NC & D} \alpha^{-1} &= \Matx{I_n & 0 \\ 0 & p I_n} \Matx{A & B \\ NC & D} \Matx{I_n & 0 \\ 0 & \frac{1}{p} I_n} = \Matx{A & \frac{1}{p} B \\ pNC & D}
\end{align*}
is in $\Gm_{0,2}(p)$ if and only if $B \equiv 0 \bmod p$ and we find $\alpha \Gm_{0,2}(N) \alpha^{-1} \cap \Gm_{0,2}(p) = \Gm_{0,2}(Np)$ under the assumption that $p \ndv N$. This proves $\rho'' \in \Gm_{0,2}(Np)$; in other words, $\rho$ is in the right $\alpha^{-1} \Gm_{0,2}(Np) \alpha$-coset of $\rho'$.

Thus, our set $R_N$ is the complete set of representatives for $\alpha^{-1} \Gm_{0,2}(Np) \alpha \backslash \Gm_{0,2}(N)$.
\end{proof}

\begin{cor}
Suppose that $p \ndv N$ is inert in $K$. We have the following decomposition
$$\Gm_{0,2}(N) \; \alpha \; \Gm_{0,2}(N) = \bigsqcup_{(D, B) \in R} \Gm_{0,2}(N) \Matx{p \ConjTranInv{D} & B \\ 0 & D}$$
\thlabel{cor:coset_decomposition_of_Tp_operator}
\end{cor}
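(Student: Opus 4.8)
The plan is to deduce the statement directly from Lemma~\thref{lm:right_coset_representatives} together with the standard coset-counting principle for Shimura pairs recalled above: if $(\Gamma,\Delta)$ is a Shimura pair, $\alpha\in\Delta$, and $\{\rho'\}$ is a complete set of representatives for $(\alpha^{-1}\Gamma\alpha\cap\Gamma)\backslash\Gamma$, then $\Gamma\alpha\Gamma=\bigsqcup_{\rho'}\Gamma\alpha\rho'$ (Shimura \cite{Shimura1971}, Proposition 3.1). First I would apply this with $\Gamma=\Gm_{0,2}(N)$, $\Delta=\Delta_{0,2}(N)$ and $\alpha=\Matx{I_2 & 0\\0 & pI_2}$; since $p\ndv N$ we have $\alpha^{-1}\Gm_{0,2}(N)\alpha\cap\Gm_{0,2}(N)=\alpha^{-1}\Gm_{0,2}(Np)\alpha$ (an identity already verified in the proof of Lemma~\thref{lm:right_coset_representatives}), so that lemma supplies the complete set of representatives $R_N$ and gives
$$\Gm_{0,2}(N)\,\alpha\,\Gm_{0,2}(N)=\bigsqcup_{\rho'\in R_N}\Gm_{0,2}(N)\,\alpha\,\rho'.$$

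Next I would translate the right-hand side into the asserted form. Each $\rho'\in R_N$ was produced in the proof of Lemma~\thref{lm:right_coset_representatives} as $\rho'=\alpha^{-1}\Matx{U & V\\T & W}\Matx{p\ConjTranInv{D} & B\\0 & D}$ for some $\Matx{U & V\\T & W}\in\Gm_{0,2}(N)$ and some $(D,B)$ drawn from Krieg's level-one list $R$ in \eqref{eq:CosetDecompositionLevel1}; hence $\alpha\rho'=\Matx{U & V\\T & W}\Matx{p\ConjTranInv{D} & B\\0 & D}$ and therefore
$$\Gm_{0,2}(N)\,\alpha\,\rho'=\Gm_{0,2}(N)\Matx{p\ConjTranInv{D} & B\\0 & D}.$$
The assignment $\rho'\mapsto(D,B)$ is a bijection $R_N\to R$ by construction, so substituting this identity for every $\rho'$ yields the claimed decomposition; the disjointness of the cosets on the right is inherited from that of the cosets $\Gm_{0,2}(N)\,\alpha\,\rho'$, i.e. from the fact that the $\rho'$ represent distinct classes in $(\alpha^{-1}\Gm_{0,2}(Np)\alpha)\backslash\Gm_{0,2}(N)$.

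I do not expect a genuine obstacle: the corollary is a bookkeeping consequence of Lemma~\thref{lm:right_coset_representatives}. The only point deserving attention is to confirm that the index set $R$ in the statement is literally Krieg's set from \eqref{eq:CosetDecompositionLevel1} — equivalently, that raising the level from $1$ to $N$ neither changes the number of cosets nor the shape of the representative matrices $\Matx{p\ConjTranInv{D} & B\\0 & D}$. This is precisely what the construction in Lemma~\thref{lm:right_coset_representatives} guarantees, since each level-$N$ representative $\rho'$ was rigged so that $\alpha\rho'$ lies in the $\Gm_{0,2}(N)$-orbit of the corresponding level-one matrix $\Matx{p\ConjTranInv{D} & B\\0 & D}$.
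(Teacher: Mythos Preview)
Your proposal is correct and follows essentially the same route as the paper: apply Shimura's coset principle with the representative set $R_N$ from \thref{lm:right_coset_representatives}, then use the fact that each $\rho'\in R_N$ was built as $\alpha^{-1}M\Matx{p\ConjTranInv{D} & B\\0 & D}$ with $M\in\Gm_{0,2}(N)$, so that $\Gm_{0,2}(N)\,\alpha\rho'=\Gm_{0,2}(N)\Matx{p\ConjTranInv{D} & B\\0 & D}$. This is precisely the paper's argument.
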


\begin{pnote}
It follows from the previous lemma that
$$\Gm_{0,2}(N) \; \alpha \; \Gm_{0,2}(N) = \bigsqcup_{\beta \in R_N} \Gm_{0,2}(N) \alpha \beta$$
The elements $\alpha \beta$ as $\beta$ ranges through $R_N$ are unfortunately not ``upper triangular'' (i.e. having the lower left block zero) so the transformation $F |_k \; \alpha \beta$ is not easy. So we want to choose an ``upper triangular'' element in each coset $\Gm_{0,2}(N) \alpha \beta$.

\begin{proof}
In the proof of the \thref{lm:right_coset_representatives}, each representatives in $R_N$ is obtained as
$$\beta = \rho(B, D, U, V, T, W) = \alpha^{-1} \Matx{U & V\\T & W} \Matx{p \ConjTranInv{D} & B \\ 0 & D}$$
with some matrix $\Matx{U & V\\T & W} \in \Gm_{0,2}(N)$. It follows that
$$\alpha \beta = \Matx{U & V\\T & W} \Matx{p \ConjTranInv{D} & B \\ 0 & D}$$
with $(D, B)$ as described by Krieg.

[The way Krieg got the representatives is to claim (without citation) that we can find representative in ``upper triangular'' form and furthermore $D$ is upper triangular. I don't know he actually find the representatives but my guess is to use the analysis I did in previous note.]
\end{proof}
\end{pnote}

\begin{thm}
Suppose that $p$ is an inert prime in $K$ and $p \ndv N$. Then the \Maass space $\MF_k^*(N)$ is stable under $T_p$. In other words, if $F \in \MF_k^*(N)$ then $G = F |_k \; T_p$ is also in the \Maass space.
\end{thm}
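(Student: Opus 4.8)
The plan is to imitate Krieg's treatment of the level-one case in Section~7 of \cite{Krieg1991}, now using the general-level coset decomposition of \thref{cor:coset_decomposition_of_Tp_operator}, and to conclude via the Fourier-coefficient criterion \thref{lm:alt_characterization_Maass_space}. First I would write $G = F|_k T_p = \sum_{(D,B)\in R} F|_k \Matx{p\ConjTranInv{D} & B \\ 0 & D}$ using \thref{cor:coset_decomposition_of_Tp_operator}. Each representative $g_{D,B} = \Matx{p\ConjTranInv{D} & B \\ 0 & D}$ is block upper triangular with $\mu(g_{D,B}) = p$, so $j(g_{D,B};Z) = \det D$ and $g_{D,B}Z = p\ConjTranInv{D}\, Z\, D^{-1} + B D^{-1}$. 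Substituting the Fourier expansion of $F$ and comparing the coefficients of $\epx{\Tr(TZ)}$, the contribution of a fixed $D$ is $(\det D)^{-k}\sum_B c_F(S)\,\epx{\Tr(S B D^{-1})}$, where $S = S(T,D) := \tfrac1p D\,T\,\ConjTran{D}$ is the Hermitian matrix characterized by $p\, D^{-1} S\, \ConjTranInv{D} = T$. One checks directly that $\det S = \det T$ for the two mixed families $D = \Matx{p & 0 \\ 0 & 1}$ and $D = \Matx{1 & d \\ 0 & p}$, that $\det S = p^2 \det T$ for $D = pI_2$, and that $\det S = p^{-2}\det T$ for $D = I_2$.

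Next I would evaluate, for each of the four families of representatives, the inner sum over $B$: since $B$ ranges over $\OK/p\OK$-valued and $\{1,\dots,p\}$-valued entries of bounded size (and $\OK/p\OK$ is a field of $p^2$ elements because $p$ is inert), $\sum_B \epx{\Tr(S B D^{-1})}$ collapses to a power of $p$ times a divisibility condition on $S$---equivalently a congruence on $T$ modulo $p$. Summing the four contributions then yields a closed formula expressing $c_G(T)$ as a linear combination, with coefficients that are powers of $p$, of $c_F(pT)$, of $c_F(T/p)$, and of the $c_F(S(T,D))$ for the two mixed $D$'s (the last of which involves a sum over $d \in \OK/p\OK$). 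The key structural point, already visible above, is that every argument of $c_F$ occurring here has determinant $\det T$, $p^2\det T$, or $\det T/p^2$.

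Finally I would feed in the hypothesis $F \in \MF_k^*(N)$, so $c_F(T) = \beta(\epsilon(T), D\det T/\epsilon(T)^2)$ for a function $\beta$ satisfying (i)--(iii) of \thref{lm:alt_characterization_Maass_space}. Using $\epsilon(pT) = p\,\epsilon(T)$, $\det(pT) = p^2\det T$ (and the analogous identities for $T/p$ when it is integral), together with the determinant observation above, each occurrence of $c_F$ in the formula for $c_G(T)$ becomes $\beta$ evaluated at a pair whose second coordinate is always $D\det T$ divided by the square of the relevant value of $\epsilon$; the only residual dependence on $T$ is through the number of $d \bmod p\OK$ giving each value of $\epsilon(S(T,D))$, which is controlled by a counting lemma depending only on the $p$-local structure of $T$, hence only on $\epsilon(T)$ and $\det T$. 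Invoking the recursion (ii) for $\beta$ at the prime $p$ to reorganize these $\epsilon$-strata, the formula collapses to $c_G(T) = \beta_G(\epsilon(T), D\det T/\epsilon(T)^2)$ for a function $\beta_G$ obtained from $\beta$ by an operation on its $p$-component. One then checks $\beta_G$ again satisfies (i)--(iii): (i) is the displayed identity, (iii) and (ii) at primes $\ell \neq p$ are inherited because $\beta_G$ alters only the $p$-part of the first variable and $p \nmid N$, and (ii) at $\ell = p$ is a short formal power series manipulation from the corresponding property of $\beta$. Since $G \in \MF_{k,2}(N)$ as a Hecke translate of a holomorphic form, \thref{lm:alt_characterization_Maass_space} then gives $G \in \MF_k^*(N)$.

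The hardest part will be the bookkeeping of the second paragraph---correctly computing the $B$-sum and the attendant congruence conditions for all four families of representatives---together with the counting lemma and the recursion check for $\beta_G$ in the third paragraph. Since $p \nmid N$, the level $N$ is carried along passively throughout (the coset representatives and all the relevant reductions modulo $p$ are the same as for $N = 1$), so this is essentially Krieg's computation with the extra $N$ present but inert, which should keep the verification, though lengthy, routine.
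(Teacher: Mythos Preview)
Your proposal is correct and follows essentially the same approach as the paper. The paper's proof is terser: it simply writes down the explicit formula for $\beta_G$ in terms of $\beta_F$ (with three cases according to $\val_p(r)$), cites Krieg \cite{Krieg1991} for the verification of properties (i) and (ii) of \thref{lm:alt_characterization_Maass_space} (since by \thref{cor:coset_decomposition_of_Tp_operator} the coset representatives are identical to the level-one case), and then checks the extra property (iii)---the only genuinely new thing at level $N$---directly from the formula using (ii) and (iii) for $\beta_F$; your outline unwinds the Krieg computation that the paper merely cites.
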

\begin{proof}
Let $\beta_F$ be the function associated to $F$ by \thref{lm:alt_characterization_Maass_space}. Following \cite{Krieg1991}, Section 7, we define\footnote{There appears to be a mistake in Krieg's paper \cite{Krieg1991}, it should be $p^{4-2k}$, not $p^{6-2k}$.}
\begin{align*}
\beta_G(p^v q, r) &:= \beta_F(p^{v-1} q, r) + p^{4-2k} \beta_F(p^{v+1}q, r) \\
&\quad + \begin{cases}
p^{1-k} \beta_F(p^{v+1}q, p^{-2}r)
+ p^{3-k} \beta_F(p^{v-1} q, p^2 r) &\tif p^2 \dv r\\
p^{1-k} \beta_F(p^v q, r)
+ p^{3-k} \beta_F(p^{v-1} q, p^2 r) &\tif p \dv r, p^2 \ndv r\\
p^{1-k} (p + 1) \beta_F(p^v q, r)
+ p^{1-k} (p^2 - p) \beta_F(p^{v-1} q, p^2 r) &\tif p \ndv r\\
\end{cases}
\end{align*}
where $p \ndv q$. Then Krieg \cite{Krieg1991} already showed that $\beta_G$ satisfies (i) and (ii) in \thref{lm:alt_characterization_Maass_space} for $G$. The last property is evident from the definition of $\beta_G$ and the corresponding property (ii) and (iii) of $\beta_F$.

\begin{pnote}
To see that: As $p^v q \dv N^\infty$ if and only if $v = 0$ and $q \dv N^\infty$, we find that if $p^v q \dv N^\infty$ then
\begin{align*}
\beta_G(p^v q, r) &= p^{4-2k} \beta_F(p q, r) + \begin{cases}
p^{1-k} \beta_F(p q, p^{-2} r) &\tif p^2 \dv r\\
p^{1-k} \beta_F(q, r) &\tif p \dv r, p^2 \ndv r\\
p^{1-k} (p + 1) \beta_F(q, r) &\tif p \ndv r
\end{cases}\\
&= p^{4-2k} (p^{k-1} \beta_F(q, r) + \beta_F(q, rp^2)) + \begin{cases}
p^{1-k} (p^{k-1} \beta_F(q, p^{-2} r) + \beta_F(q, r)) &\tif p^2 \dv r\\
p^{1-k} \beta_F(q, r) &\tif p \dv r, p^2 \ndv r\\
p^{1-k} (p + 1) \beta_F(q, r) &\tif p \ndv r
\end{cases}\\
&= p^{3-k} \beta_F(1, r q^2) + p^{4-2k} \beta_F(1, rq^2p^2) + \begin{cases}
\beta_F(1, p^{-2} rq^2) + p^{1-k} \beta_F(1, rq^2) &\tif p^2 \dv r\\
p^{1-k} \beta_F(1, rq^2) &\tif p \dv r, p^2 \ndv r\\
p^{1-k} (p + 1) \beta_F(1, rq^2) &\tif p \ndv r
\end{cases}
\end{align*}
equals
\begin{align*}
\beta_G(1, rq^2) &= p^{4-2k} \beta_F(p, rq^2) + \begin{cases}
p^{1-k} \beta_F(p, p^{-2}rq^2) &\tif p^2 \dv rq^2\\
p^{1-k} \beta_F(1, rq^2) &\tif p \dv r, p^2 \ndv rq^2\\
p^{1-k} (p + 1) \beta_F(1, rq^2) &\tif p \ndv rq^2\\
\end{cases}\\
&= p^{4-2k} (p^{k-1} \beta_F(1, rq^2) + \beta_F(1, rq^2p^2)) + \begin{cases}
p^{1-k} (\beta_F(p, p^{-2}rq^2)) &\tif p^2 \dv r\\
p^{1-k} \beta_F(1, rq^2) &\tif p \dv r, p^2 \ndv r\\
p^{1-k} (p + 1) \beta_F(1, rq^2) &\tif p \ndv r\\
\end{cases}
\end{align*}
\end{pnote}

\begin{pnote}
How did Krieg work out the formula $\beta_G$ above? One works out the Fourier coefficients of $G$ from that of $F$. Let
$$F(Z) = \sum_{T \in S_2} c_F(T) \epx{\Tr\;TZ}$$
be the Fourier expansion of $F$. From \thref{cor:coset_decomposition_of_Tp_operator}, let $S$ denote the set of pairs $(D, B)$, we know that\footnote{I am following Krieg here and not put the typical normalization in $F|_k \sigma$ when $\det(\sigma) \not= 1$.}
\begin{align*}
G(Z) &= \sum_{(D, B) \in S} (F|_k \Matx{p \ConjTranInv{D} & B \\ 0 & D}) (Z)\\
&= \sum_{(D, B) \in S} \det(D)^{-k} F(p \ConjTranInv{D} Z D^{-1} + B D^{-1})\\
&= \sum_{(D, B) \in S} \det(D)^{-k} \sum_{T \in S_2} c_F(T) \epx{\Tr \; T (p \ConjTranInv{D} Z D^{-1} + B D^{-1})}\\
&= \sum_{T \in S_2} c_F(T) \sum_{(D, B) \in S} \det(D)^{-k}  \epx{\Tr \; T (p \ConjTranInv{D} Z D^{-1})} \epx{\Tr \; T B D^{-1}}\\
&= \sum_{T \in S_2} c_F(T) \sum_{(D, B) \in S} \det(D)^{-k} \epx{\Tr \; T B D^{-1}} \epx{\Tr \; p D^{-1} T \ConjTranInv{D} Z}
\end{align*}
by observing that $\Tr(XY) = \Tr(YX)$ for any two matrices $X, Y$.
And so we find that
\begin{align*}
c_G(T) &= \sum_{\underset{p D^{-1} T' \ConjTranInv{D} = T}{(D,B) \in S; T' \in S_2}} c_F(T') \det(D)^{-k} \epx{\Tr \; T' B D^{-1}}\\
&= \sum_{\underset{T' = \frac{1}{p} D T \ConjTran{D} \in S_2}{(D,B) \in S}} c_F(T') \det(D)^{-k} \epx{\Tr \; T' B D^{-1}}\\
&= \sum_{(D,B) \in S} c_F(p^{-1} D T \ConjTran{D}) \det(D)^{-k} \epx{\Tr \; p^{-1} D T \ConjTran{D} B D^{-1}}\\
&= \sum_{(D,B) \in S} c_F(p^{-1} D T \ConjTran{D}) \det(D)^{-k} \epx{\Tr \; p^{-1} T \ConjTran{D} B}
\end{align*}
where we make a convention that $c_F(T') = 0$ whenever $T' \not\in S_2$. The last formula can be continued
\begin{align*}
c_G(T) &= \underbrace{c_F\parens{\frac{T}{p}}}_{D=I, B=0}
+ \sum_{\gamma, \delta = 1}^p \sum_{b \in \OK/p\OK} c_F(pT) p^{-2k} \epx{\Tr \; \cancel{p^{-1}} T \cancel{(pI)} \Matx{\gamma & b\\\cc{b} & \delta}}\\
&\qquad + \sum_{\gamma = 1}^pc_F\parens{\underbrace{p^{-1} \Matx{p & 0\\0 & 1} T \Matx{p & 0\\0 & 1}}_{=: T^p}} p^{-k} \epx{\frac{1}{p} \Tr \; T \Matx{p & 0\\0 & 1} \Matx{\gamma & 0\\0 & 0}}\\
&\qquad + \sum_{\gamma = 1}^p \sum_{d \in \OK/p\OK} c_F\parens{\underbrace{p^{-1} \Matx{1 & d\\0 & p} T \Matx{1 & 0\\\cc{d} & p}}_{=: T^p_d}} p^{-k} \epx{\frac{1}{p} \Tr \; T \Matx{1 & 0\\\cc{d} & p} \Matx{0 & 0\\0 & \gamma}}\\
&= c_F\parens{\frac{T}{p}}
+  p^{-2k} c_F(pT) \sum_{\gamma, \delta = 1}^p \sum_{b \in \OK/p\OK} \underbrace{\epx{\Tr \; T \Matx{\gamma & b\\\cc{b} & \delta}}}_{1} + p^{-k} c_F(T^p) \sum_{\gamma = 1}^p \underbrace{\epx{\Tr \; T \Matx{\gamma & 0\\0 & 0}}}_{1}\\
&\qquad + p^{-k} \sum_{d \in \OK/p\OK} c_F(T^p_d) \sum_{\gamma = 1}^p \underbrace{\epx{\Tr \; T \Matx{0 & 0\\0 & \gamma}}}_{1}\\
&= c_F\parens{\frac{T}{p}} + p^{4-2k} c_F(pT) + p^{1-k} c_F(T^p) + p^{1-k} \sum_{d \in \OK/p\OK} c_F(T^p_d)
\end{align*}
Define
$$\epsilon_p(T) := \val_p(\epsilon(T))$$
Then we have
$$\epsilon(T) = \prod_{p \text{ prime}} p^{\epsilon_p(T)}.$$
And also\footnote{For split prime, we need to use the correct localization.}
\begin{align*}
\epsilon_p(T) &= \max\{v \in \Z_{\geq 0} \suchthat p^{-v} T \in S_2\}\\
&= \min\{\val_p(\ell), \val_p(m), \val_p(t) \}
\end{align*}
if we write
$$T = \Matx{\ell & t \\ \cc{t} & m} \qquad \text{ where } \ell, m\in \Z \tand t \in \DK.$$

For such $T$, we have
$$T^p := \Matx{p \ell & t \\ \cc{t} & \frac{m}{p}}$$
and
$$T_d^p := \Matx{\frac{\ell + d \cc{t} + \cc{d} t + |d|^2 m}{p} & t + dm \\ \cc{t} + m \cc{d} & p m}.$$
We set $\ell' = \frac{\ell}{\epsilon(T)}$, $m' = \frac{m}{\epsilon(T)}$ and $t' = \frac{t}{\epsilon(T)}$ and
$$r := r(T) = D \frac{\det(T)}{\epsilon(T)^2} = D \det\parens{\frac{T}{\epsilon(T)}}.$$
Note that by definition of $\epsilon(T)$, the prime $p$ cannot simultaneously divide all $\ell', m', t'$.

The problem with the above description is that there seems to be a dependency on the actual matrix $T$ whereas Fourier coefficients of forms in the \Maass space should only depends on $\epsilon(T)$ and $\det(T)$, not on the actual entries of $T$: For instance, we have
$$\epsilon(T^p) = \begin{cases}
p\epsilon(T) &\tif p \dv t' \tand p^2 \dv m';\\
\epsilon(T) &\tif p \dv m' \tand p \ndv t' \tor p^2 \ndv m';\\
\frac{\epsilon(T)}{p} &\tif p \ndv m'\\
\end{cases}$$
which is clearly depending on the entries of $T^p$ and cannot be distinguished from $\det(T)$ alone. One also observes that $\epsilon(T^p_d)$ is also either $p\epsilon(T), \epsilon(T)$ or $\frac{\epsilon(T)}{p}$. And $\det(T^p) = \det(T^p_d) = \det(T)$.

We consider three cases corresponding to the formula above:
\begin{itemize}
\item $p \dv t' \tand p^2 \dv m'$ then $p \ndv \ell'$ and
$\frac{\ell + d \cc{t} + \cc{d} t + |d|^2 m}{p}$ has the smallest $p$-adic valuation amongst the entries of $T^p_d$ which is $\val_p(\ell) - 1$. In other words,
$$\epsilon(T^p_d) = \frac{\epsilon(T)}{p}$$
for all $d$. Hence,
\begin{align}
c_G(T) &= \beta_F\parens{\frac{\epsilon(T)}{p}, r} + p^{4-2k} \beta_F\parens{p\epsilon(T), r} + p^{1-k} \beta_F\parens{p\epsilon(T), \frac{r}{p^2}}
+ p^{1-k} p^2 \beta_F\parens{\frac{\epsilon(T)}{p}, p^2 r}
\label{eq:FourierCoefficientFirstCase}
\end{align}
\noindent Note that in this case, we have $p^2 \dv r$.

\item $p \dv m'$ but either $p \ndv t'$ or $p^2 \ndv m'$:
\begin{itemize}
\item \item $p\epsilon(T) \dv t$ then $p^2\epsilon(T) \ndv m$ and $\val_p(\ell) = \epsilon_p(T)$ and it follows that $\val_p\parens{\frac{\ell + d \cc{t} + \cc{d} t + |d|^2 m}{p}} = \epsilon_p(T) - 1$ always. The Fourier coefficient should be
\begin{align}
c_G(T) &= \beta_F\parens{\frac{\epsilon(T)}{p}, r} + p^{4-2k} \beta_F\parens{p\epsilon(T), r} \notag\\
&\quad + p^{1-k} \beta_F\parens{\epsilon(T), r}
+ \underbrace{p^{1-k} p^2}_{p^{3-k}} \beta_F\parens{\frac{\epsilon(T)}{p}, p^2 r}
\label{eq:FourierCoefficientSecondCase}
\end{align}
Note that $p \dv r$ and $p^2 \ndv r$ in this case.

\item If $p \ndv t'$: then $\val_p(t) = \epsilon_p(T) = \val_p(t + dm)$ and $\val_p\parens{\frac{\ell + d \cc{t} + \cc{d} t + |d|^2 m}{p}} = \epsilon_p(T)$ or $\epsilon_p(T) - 1$ depending on whether $p$ divides $\ell' + d \cc{t'} + \cc{d} t' + |d|^2 m'$ or not.
\begin{align*}
p \dv \ell' + d \cc{t'} + \cc{d} t' + |d|^2 m' &\iff
d \text{ is solution to } t' X^p + \cc{t'} X + \ell' = 0
\end{align*}
in the field of $p^2$ elements. Note that conjugation is just Frobenius. There should be precisely $p$ solutions but we can't use the degree of the polynomial, write $X = a + b \omega$ where $a, b \in \Z/p\Z$ and $\omega \in \OK$ is such that $1, \omega$ is an integral basis for $\OK$ and realize that for each $b \in \Z/p\Z$, we have a unique possibility for $a$ as long as $2\Tr_{K/\Q}(t')$ is invertible mod $p$, which is the same as $p \ndv t'$ by assumption that $p$ is inert.
So in the summand in the sum over $d \in \OK/p\OK$, for $p$ values of $d$, we get $\epsilon(T^p_d) = \epsilon(T)$ and for the rest ($p^2 - p$ of them), we get $\epsilon(T^p_d) = \frac{\epsilon(T)}{p}$. Thus, the Fourier coefficient is given by
\begin{align}
c_G(T) &= \beta_F\parens{\frac{\epsilon(T)}{p}, r} + p^{4-2k} \beta_F\parens{p\epsilon(T), r} + p^{1-k} \beta_F\parens{\epsilon(T), r}
+ p^{1-k} (p^2 - p) \beta_F\parens{\frac{\epsilon(T)}{p}, p^2 r} \notag\\
&\qquad + p^{1-k} p \beta_F\parens{\epsilon(T), r} \notag\\
&= \beta_F\parens{\frac{\epsilon(T)}{p}, r} + p^{4-2k} \beta_F\parens{p\epsilon(T), r} + p^{1-k} (p^2 - p) \beta_F\parens{\frac{\epsilon(T)}{p}, p^2 r} + p^{1-k} (p + 1) \beta_F\parens{\epsilon(T), r} \label{eq:FourierCoefficientThirdCase}
\end{align}
Note that in this case $p \ndv r$.
\end{itemize}

\item $p \ndv m'$: Again, we want to identify the $\epsilon(T^p_d)$ for $d \in \OK/p\OK$. In this case, $\val_p(m) = \epsilon_p(T)$ and
\begin{align*}
\epsilon_p(T^p_d) &= \epsilon_p(T) + \min\{\val_p(\ell' + d \cc{t'} + \cc{d} t' + |d|^2 m') - 1, \val_p(t' + m' d)\}\\
&= \epsilon_p(T) + \min\{\val_p(m'\ell' + d m' \cc{t'} + m' \cc{d} t' + |d|^2 m'^2) - 1, \val_p(t' + m' d)\} \tsince p \ndv m'\\
&= \epsilon_p(T) + \min\{\val_p(m'\ell' - |t'|^2 + |t' + m' d|^2) - 1, \val_p(t' + m' d)\}\\
&= \epsilon_p(T) + \begin{cases}
1 &\tif p \dv t' + m'd \tand p^2 \dv \ell' + d \cc{t'} + \cc{d} t' + |d|^2 m'\\
0 &\tif p \dv \ell' + d \cc{t'} + \cc{d} t' + |d|^2 m' \tand p \ndv t' + m'd \tor p^2 \ndv \ell' + d \cc{t'} + \cc{d} t' + |d|^2 m'\\
-1 &\tif p \ndv \ell' + d \cc{t'} + \cc{d} t' + |d|^2 m'
\end{cases}
\end{align*}
Observe that the first case can only occur for a single $d = \frac{-t'}{m'}$ and can only happen when $p^2 \dv r$ and that in that case, $\epsilon_p(T^p_d) = \epsilon_p(T) - 1$ for the remaining $d \not= \frac{-t'}{m'}$. Thus, if $p^2 \dv r$, we also get \eqref{eq:FourierCoefficientThirdCase}.

If $p^2 \ndv r$ but $p \dv r$ then we have $\val_p(m'\ell' - |t'|^2 + |t' + m' d|^2) = 0$ only when $p \ndv t' + m' d$ which happens for $p^2 - 1$ values of $d$. Thus, we get \eqref{eq:FourierCoefficientSecondCase}.

Finally, if $p \ndv r$ then we have \eqref{eq:FourierCoefficientThirdCase} by similar counting argument.
\end{itemize}

Combining all the cases and substituting $\epsilon(T) = p^v q$ with $p \ndv q$ and $D \frac{\det(T)}{\epsilon(T)^2}$ by $r$, we define
\begin{align*}
\beta_G(p^v q, r) &:= \beta_F(p^{v-1} q, r) + p^{4-2k} \beta_F(p^{v+1}q, r) \\
&\quad + \begin{cases}
p^{1-k} \beta_F\parens{p^{v+1}q, p^{-2}r}
+ p^{3-k} \beta_F(p^{v-1} q, p^2 r) &\tif p^2 \dv r\\
p^{1-k} \beta_F\parens{p^v q, r}
+ p^{3-k} \beta_F\parens{p^{v-1} q, p^2 r} &\tif p \dv r, p^2 \ndv r\\
p^{1-k} (p + 1) \beta_F\parens{p^v q, r}
+ p^{1-k} (p^2 - p) \beta_F\parens{p^{v-1} q, p^2 r} &\tif p \ndv r\\
\end{cases}
\end{align*}
from the three formulas \eqref{eq:FourierCoefficientFirstCase}, \eqref{eq:FourierCoefficientSecondCase}, and \eqref{eq:FourierCoefficientThirdCase}. (There appears to be a mistake in Krieg's \cite{Krieg1991} paper where $p^{6-2k}$ is used in place of $p^{4-2k}$.)

\end{pnote}
\end{proof}

\begin{remark}
The \Maass space is not stable under all Hecke operators but its \emph{adelic} analogue should be stable under all Hecke operators; as illustrated by Klosin \cite{Klosin2015}, Section 5. \end{remark}

 {}
\bibliographystyle{acm}

\end{document}